\newtheorem{theorem}{Theorem}
\newtheorem{lemma}[theorem]{Lemma}
\newtheorem{corollary}[theorem]{Corollary}
\newtheorem{conjecture}[theorem]{Conjecture}
\numberwithin{theorem}{section}
\newcommand{\ie}{{i.e.}}
\newcommand{\eg}{{e.g.}}
\newcommand{\cost}{{\rm cost}}
\newcommand{\dist}{{\rm dist}}
\newcommand{\diam}{{\rm diam}}
\newcommand{\NN}{\mathbb{N}} %  set of natural numbers
\newcommand{\RR}{\mathbb{R}} %  set of real numbers
\newcommand{\eps}{\varepsilon}
\newcommand{\later}[1]{}
\newcommand{\old}[1]{}
\title{On a Traveling Salesman Problem for Points in the Unit Cube}
\author{%
J\'ozsef Balogh\footnote{Department of Mathematics, University of Illinois at Urbana-Champaign, Urbana, Illinois 61801, USA,
E-mail: \texttt{jobal@illinois.edu}. Research is partially supported by NSF grant DMS-1764123.}
\and Felix Christian Clemen\footnote{Corresponding author, Department of Mathematics, Karlsruhe Institute of Technology,
 76131 Karlsruhe, Germany, E-mail: \texttt{felix.clemen@kit.edu}}
\and Adrian Dumitrescu\footnote{Algoresearch L.L.C., Milwaukee, WI 53217, USA,
  E-mail: \texttt{ad.dumitrescu@algoresearch.org}}
}
\begin{document}

\maketitle

\begin{abstract}
  Let $X$ be an $n$-element point set in the $k$-dimensional unit cube $[0,1]^k$ where $k \geq 2$.
  According to an old result of Bollob{\'a}s and Meir (1992), there exists a cycle (tour) $x_1, x_2, \ldots, x_n$
  through the $n$ points, such that $\left(\sum_{i=1}^n |x_i - x_{i+1}|^k \right)^{1/k} \leq c_k$, where
  $|x-y|$ is the Euclidean distance between $x$ and $y$, and $c_k$ is an absolute constant that
  depends only on $k$, where $x_{n+1} \equiv x_1$.
  From the other direction, for every $k \geq 2$ and $n \geq 2$, there exist $n$ points in  $[0,1]^k$,
  such that their shortest tour satisfies $\left(\sum_{i=1}^n |x_i - x_{i+1}|^k \right)^{1/k} = 2^{1/k} \cdot \sqrt{k}$.
  For the plane, the best constant is $c_2=2$ and this is the only exact value known.  
  Bollob{\'a}s and Meir showed that one can take $c_k =  9 \left(\frac23 \right)^{1/k} \cdot \sqrt{k}$ for every $k \geq 3$
  and conjectured that the best constant is $c_k =  2^{1/k} \cdot \sqrt{k}$, for every $k \geq 2$. 
  Here we significantly improve the upper bound and show that one can take
  $c_k = 3 \sqrt5 \left(\frac23 \right)^{1/k} \cdot \sqrt{k}$ or $c_k = 2.91 \sqrt{k} \ (1+o_k(1))$. 
  Our bounds are constructive. We also show that $c_3 \geq 2^{7/6}$, which disproves the conjecture for $k=3$.

  Connections to matching problems, power assignment problems, related problems, including algorithms, are discussed
  in this context. A slightly revised version of the Bollob{\'a}s--Meir conjecture is proposed.

\medskip
\textbf{\small Keywords}: traveling salesman problem, minimum spanning tree, binary code, relaxed triangle inequality.

\end{abstract}

\section{Introduction} \label{sec:intro}

Given $n$ points in the unit square, Newman~\cite[Problem~57]{New82}
proved that there is a closed polygonal Hamiltonian cycle (tour) $H$ through the $n$ points
such that the sum of the squares of  its edge-lengths is at most $4$.
The upper bound of $4$ cannot be improved: Figure~\ref{fig:square}
shows three different point sets whose optimal tours yield exact equality.
More importantly, the above upper bound is independent of $n$. 
\begin{figure}[ht]
\centering   
\includegraphics[scale=0.9]{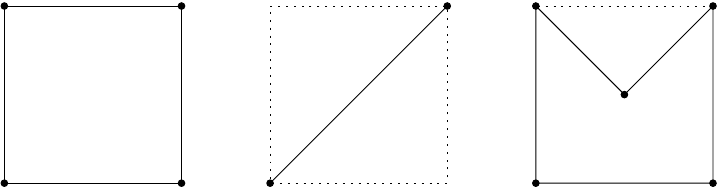}      
\caption{Tight examples with $4,2$, and $5$ points: $1+1+1+1= 2+2 = 1+1+1+ 1/2+1/2$.}
\label{fig:square}
\end{figure}
Meir~\cite{Mos91} considered the extension of this problem to higher dimensions.
For a point $x \in \RR^k$, let $|x|$ denote the Euclidean length of $x$; namely, if
$x=(\xi_1,\xi_2,\ldots,\xi_k)$, then 
$$|x|= \left( \sum_{i=1}^k \xi_i^2 \right)^{1/2}.$$ 
For two points $x,y \in \RR^k$, let the \emph{weight} of the edge $e=xy$, be $|e|:=|x-y|$, \ie, the Euclidean
distance between $x$ and $y$.

Let $X$ be an $n$-element point set in the unit cube $[0,1]^k$. For a graph $G$ on vertex set $X$, set 
\begin{align}
  S_k(G) &= \sum_{e\in G} |e|^k \quad \text{ and } \quad s_k(G) = \left(\sum_{e\in G} |e|^k\right)^{1/k}. \label{eq:S_k} 
  \end{align}
We refer to $S_k(G)$ and  $s_k(G)$ as the \emph{unscaled} and \emph{scaled} costs, respectively.
Denote by  $S_k^{\texttt{HC}}(X)$,  $S_k^{\texttt{ST}}(X)$ and $S_k^{\texttt{HP}}(X)$
($s_k^{\texttt{HC}}(X)$, $s_k^{\texttt{ST}}(X)$ and $s_k^{\texttt{HP}}(X)$)
the minimum over $S_k(G)$ ($s_k(G)$) where $G$ is a Hamiltonian cycle,
respectively a spanning tree or Hamiltonian Path with vertex set $X$. Further, let  
 \begin{gather*}
   s_k^{\texttt{HC}}(n) = \sup  \{ s_k^{\texttt{HC}}(X) \colon X\subseteq [0,1]^k, |X|=n \},
   \quad   s_k^{\texttt{ST}}(n) = \sup  \{ s_k^{\texttt{ST}}(X) \colon X\subseteq [0,1]^k, |X|=n \}, \\
     s_k^{\texttt{HP}}(n) = \sup  \{ s_k^{\texttt{HP}}(X) \colon X\subseteq [0,1]^k, |X|=n \}, \\
     s_k^{\texttt{HC}} = \sup_{n \geq 2} s_k^{\texttt{HC}}(n), \quad  s_k^{\texttt{ST}} =
     \sup_{n \geq 2}  s_k^{\texttt{ST}}(n) \quad \text{and} \quad s_k^{\texttt{HP}} = \sup_{n \geq 2}  s_k^{\texttt{HP}}(n).  
  \end{gather*}

It is clear that $s_k^{\texttt{HC}}(n) \geq s_k^{\texttt{HC}}(m)$, whenever $n \geq m$ (by clustering points and taking the limit). 
In this notation,  Newman's result mentioned earlier reads $s_2^{\texttt{HC}}(n) =2$ for every $n \geq 2$.
A~more recent reference to this result can be found in~\cite[Problem~124]{Bol06}.    
Currently this is the only exact value known.  
Meir~\cite{Mos91} asked whether $s_k(n)$ is bounded from above by a constant $c_k>0$ for every $k$.
Soon after, Bollob{\'a}s and Meir~\cite{BM92} answered Meir's question in the positive by proving that 
$ s_k^{\texttt{HC}}(n) \leq  9 \left(\frac23 \right)^{1/k} \cdot \sqrt{k}$ for every $k \geq 3$ and $n \geq 2$
(and recall that $c_2=2$). 
From the other direction, the $2$-point example consisting of two opposite vertices of $\{0,1\}^k$ shows that 
$s_k^{\texttt{HC}}(n) \geq 2^{1/k} \cdot \sqrt{k}$ for every $k \geq 2$ and $n \geq 2$; 
see Figure~\ref{fig:square}\,(center). We record their result below.

\begin{theorem}  \label{thm:bollobas-meir} {\rm (Bollob{\'a}s and Meir~\cite{BM92})}.
Let $k\geq 3$ and $n\geq 2$. Then, 
$$2^{1/k}\sqrt{k} \leq s_k^{\texttt{HC}}(n) \leq 3^{2-\frac{1}{k}}2^{1/k}\sqrt{k}.$$
\end{theorem}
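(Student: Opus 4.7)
My plan is to prove the two bounds by independent methods: the lower bound comes from an explicit two-point configuration, while the upper bound requires a hierarchical grid construction.

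For the lower bound $s_k^{\texttt{HC}}(n) \geq 2^{1/k}\sqrt{k}$, I would take $X = \{\mathbf{0}, \mathbf{1}\} \subseteq [0,1]^k$, the two antipodal vertices of the cube. The only Hamiltonian cycle on two vertices traverses the single connecting edge twice, so $S_k^{\texttt{HC}}(X) = 2\,|\mathbf{1} - \mathbf{0}|^k = 2 k^{k/2}$, and hence $s_k^{\texttt{HC}}(X) = 2^{1/k}\sqrt{k}$. For $n > 2$, the same lower bound is attained by placing $n-2$ additional points arbitrarily close to $\mathbf{0}$ and invoking the monotonicity $s_k^{\texttt{HC}}(n) \geq s_k^{\texttt{HC}}(m)$ for $n \geq m$ already stated in the excerpt.

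For the upper bound $s_k^{\texttt{HC}}(n) \leq 3^{2-1/k}\, 2^{1/k}\sqrt{k}$, I would use a recursive construction based on subdivision by a factor of $3$. Partition $[0,1]^k$ into $3^k$ subcubes of side $1/3$ and visit them in a Hamiltonian order on the grid graph $\{1,2,3\}^k$, chosen so that consecutive subcubes share a $(k-1)$-dimensional face (a snake/Gray-code order suffices for every $k \geq 2$). Recursively construct a Hamiltonian path on the points within each non-empty subcube and glue these paths with single ``bridge'' edges between consecutive non-empty subcubes. Each bridge joining two adjacent subcubes of side $3^{-\ell}$ has Euclidean length at most $3^{-\ell}\sqrt{k+3}$, hence contributes $|e|^k \leq (k+3)^{k/2} \cdot 3^{-k\ell}$; the factor $9 = 3^2$ and the $(2/3)^{1/k}$ correction in the target constant both reflect this choice of subdivision ratio.

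The main obstacle will be making the total cost converge to a constant independent of $n$ rather than to $O(\log n)$. A naive accounting of bridges at each level of recursion gives a contribution of order $(k+3)^{k/2}$ per level, which over an $L \approx (\log n)/k$-deep hierarchy would blow up logarithmically. Resolving this requires two ingredients: first, when all points happen to fall into a single subcube at the current level, one should simply zoom in (scale the argument) without charging any outer bridge cost, so that the recursion advances only when the subdivision genuinely separates the points; and second, a careful telescoping in which inner contributions are geometrically smaller by a factor $3^{-k}$ per level, so the total collapses to a geometric series. Combined, these should yield $S_k^{\texttt{HC}} \leq 9^k \cdot (2/3) \cdot k^{k/2}$, equivalently $s_k^{\texttt{HC}} \leq 9\,(2/3)^{1/k}\sqrt{k} = 3^{2-1/k}\, 2^{1/k}\sqrt{k}$.
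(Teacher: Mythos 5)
Your lower bound is exactly the paper's: the two antipodal vertices give $S_k = 2k^{k/2}$, and clustering handles $n > 2$ via monotonicity. No issue there.

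The upper bound, however, has a genuine gap, and it is precisely at the step you flag as ``the main obstacle.'' The proposed telescoping does not happen. The per-bridge cost at depth $\ell$ is indeed smaller by a factor $3^{-k}$ than at depth $\ell-1$, but the number of bridges that can occur at depth $\ell$ is up to $3^k$ times larger (each depth-$(\ell-1)$ cube can spawn up to $3^k-1$ new bridges), and these two factors cancel exactly. Concretely, take $n = 3^{kL}$ points, one near the center of each depth-$L$ subcube. Then every depth $1,\dots,L$ genuinely separates the points, so your ``zoom in'' rule does not help, and each depth contributes roughly $(1-3^{-k})(k+3)^{k/2}$ to the total, for an overall cost of order $L\,(k+3)^{k/2} = \Theta\!\left(\tfrac{\log n}{k}\, (k+3)^{k/2}\right)$ --- this grows without bound in $n$. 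The underlying problem is that you charge each bridge the full diameter $\sqrt{k+3}\cdot 3^{-\ell}$ of two adjacent subcubes, regardless of where the points actually sit; in the example above the optimal bridges have length $\approx 3^{-L}$, not $\approx 3^{-\ell}$, and the naive recursion cannot see this. To make a hierarchical argument work you would need something stronger than a per-level bridge count --- e.g.\ a $3$-adic or $2$-adic space-filling curve $H\colon[0,1]\to[0,1]^k$ that is H\"older-$(1/k)$ with constant $O(\sqrt{k})$, so that sorting the points by curve preimage gives $\sum_i |H(s_{i+1})-H(s_i)|^k \leq O(k^{k/2}) \sum_i |s_{i+1}-s_i| = O(k^{k/2})$; the H\"older inequality is what replaces your failed telescoping. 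The paper takes a different route altogether: it bounds $S_k$ of a minimum spanning tree via a ball-packing argument (Lemma~\ref{lem:bm92}, disjoint balls of radius $|e|/4$ at edge midpoints, all contained in a ball of radius $O(\sqrt{k})$), and then converts the MST to a Hamiltonian cycle using Sekanina's $T^3$-Hamiltonicity (Lemmas~\ref{lem:sek} and~\ref{lem:bm92b}), which costs a factor $\frac23\cdot 3^k$ in unscaled cost. Your exponent $9^k \cdot (2/3)$ is numerically correct, but it is really $3^k$ (from ball packing, containment radius $\tfrac34\sqrt{k}$) times $\frac23\cdot 3^k$ (from tree-to-cycle), not a grid subdivision by $3$.
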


In the conclusion of their paper~\cite{BM92}, the authors conjectured that 
$s_k^{\texttt{HC}}(n) = 2^{1/k} \cdot \sqrt{k}$ for every $k \geq 2$ and $n \geq 2$. 
Meir~\cite{Mos91} also asked for an algorithm that computes a tour whose cost is bounded by a constant
depending on $k$. As we will see in more detail in Section~\ref{sec:exact}, 
Bollob{\'a}s and Meir's proof implicitly gives a positive answer to this latter question.
Similarly, our new bounds in Theorem~\ref{thm:s} and Corollary~\ref{cor:matching} are constructive too. 

\paragraph{Background and related work.}
The \emph{traveling salesman problem} (TSP) is perhaps the most studied problem in the theory of combinatorial
optimization. Its approximability depends on the particular version of the problem.
Specifically, TSP with Euclidean distances admits a polynomial-time approximation scheme~\cite{Ar98,Mi99}. 
If the distances form a metric, then the problem is $\MaxSNP$-hard~\cite{PY93} and the best
approximation ratio known is essentially $3/2$~\cite{Chr76,KKG+21}. 

Estimating the length of a shortest tour of $n$ points in the unit square with respect to  Euclidean distances
has been studied as early as 1940s and 1950s by Fejes T\'oth~\cite{Fej40}, Few~\cite{Few55},
and Verblunsky~\cite{Ve51}, respectively.
Few~\cite{Few55} proved that the (Euclidean) length of a shortest cycle (tour)
through $n$ points in the unit square $[0,1]^2$ is at most $\sqrt{2n}+7/4$.
The same upper bound holds for the minimum spanning tree~\cite{Few55}.
Few's bound was rediscovered in 1983 by Supowit, Reingold, and Plaisted~\cite{SRP83}.
A~slightly better upper bound for the shortest cycle, $1.392 \sqrt{n} + 7/4$, has been derived by Karloff~\cite{Ka89},
who also emphasized the difficulty of the problem. 
The current best lower bound for the length of such a cycle is due to Fejes T\'oth~\cite{Fej40} and Few~\cite{Few55}:
it is $\left(\frac{4}{3}\right)^{1/4} \sqrt{n} - o(\sqrt{n})$, where $(4/3)^{1/4} = 1.075\ldots$.
For every dimension $k \geq 3$, Few showed that the maximum length of a shortest tour through $n$ points
in the unit cube is $\Theta(n^{1-1/k})$. Moran~\cite{Mor84} studied the length of the shortest traveling salesman tour
through a set of $n$ points of unit diameter in $\RR^k$. 

The length of a shortest tour through a random sample $\{ X_1,\ldots,X_n\}$ of $n$ points
in the unit cube $[0,1]^k$ was determined by Beardwood, Halton, and Hammersley. Let this 
length be denoted by $L(X_1,\ldots,X_n)$.
If $\{X_i\}$ is a sequence of independent random variables with the
uniform distribution on $[0,1]^k$, then there is a constant $\beta(k)>0$ such that
\[ {L(X_1,\ldots,X_n) / n^{1-1/k}} \to \beta(k) \]
with probability one~\cite{BHH59}. Later, Rhee~\cite{Rh92} proved that
${\beta(k) / \sqrt{k}} \to  {1 / \sqrt{2 \pi e}}$, see also~\cite{St97}.
The relevance of the cube diagonal, $\sqrt{k}$, in the above formulas, can be also observed
in our estimates for $s_k(n)$; see Theorem~\ref{thm:s}\,(ii) and Conjecture~\ref{conj:cycle}. 

Expressions for the cost of a Hamiltonian cycle of the kind in~\eqref{eq:S_k} have been considered
in the context of \emph{power assignment} problems in wireless networks.
Let $X$ be an $n$-element point set in the unit cube $[0,1]^k$ and $\alpha \geq 1$ be a real number.
For a Hamiltonian cycle $H$ as above, one is interested in minimizing a cost of the form
\begin{equation} \label{eq:alpha}
  \cost(H)= \sum_{i=1}^n |x_i - x_{i+1}|^\alpha.
\end{equation}
Such costs typically reflect the \emph{energy} costs along the edges that make the
cycle~\cite{BNS+10,KLS06} in wireless network transmission. 
An illustrative example is that of a \emph{virtual token} floating through the network,
where sensor nodes can attach or read data from the token before sending it to the next
node on the cycle. One can speak about finding a traveling salesman tour (TSP tour) of minimum
energy cost~\cite{FLLN11}.
The fact that $k$ is the smallest value of $\alpha$ for which the cost in~\eqref{eq:alpha} is bounded
from above by a constant (depending on $k$ but independent on $n$) should be noted~\cite{BM92,KLS06};
a fine grid section in the cube proves this point. 

As pointed out in several places in the literature~\cite{And01,BC00,BNS+10,FLLN11}, simply computing a
short (even optimal) tour for the underlying Euclidean instance does not work, \ie, does not provide
a good approximation with respect to the power costs in~\eqref{eq:alpha}. Funke, Laue, Lotker and Naujoks~\cite{FLLN11}
showed that the cost of an optimal tour for the Euclidean instance can be a factor of $\Omega(n)$ larger than that
of optimal tour for the power costs (a simple example can be constructed with equidistant points
on a line or on a circle of large radius).

In \cite{FLLN11} a recursive algorithm was also presented, that given $n$ points in $\RR^2$, it constructs a TSP tour for edge costs $|pq|^\alpha = |e|^\alpha$, 
whose cost is at most $2 \cdot 3^{\alpha-1}$ times that
of a minimum spanning tree (MST) of the point set. Since the cost of an MST does not exceed that of an
optimal Euclidean TSP tour, their algorithm is $2 \cdot 3^{\alpha-1}$-factor approximation for the TSP
with power costs as in~\eqref{eq:alpha}. The authors further show that the approach extends to $\RR^k$
with the same ratio:

\begin{theorem}   \label{thm:flln11}
 {\rm (Funke, Laue, Lotker, and Naujoks~\cite{FLLN11})}.
 There exists a $2 \cdot 3^{\alpha-1}$-approximation algorithm for the TSP in $\RR^k$ if the edge weights are
 Euclidean distances to the power $\alpha$. 
\end{theorem}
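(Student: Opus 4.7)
The plan is to reduce the problem to the minimum spanning tree (MST) cost, following the standard MST-based paradigm for approximating metric TSP, but tracking carefully the loss incurred by the relaxed triangle inequality satisfied by $|e|^\alpha$.

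\emph{Step 1 (MST is a lower bound on the optimum).} I first establish
\[
\text{MST}_\alpha(X):=\min\Bigl\{\sum_{e\in T}|e|^\alpha : T\text{ a spanning tree of }X\Bigr\}\le \opt_\alpha(X).
\]
Deleting any edge of an optimal Hamiltonian cycle produces a spanning tree whose $|e|^\alpha$-cost is at most that of the cycle. Since $t\mapsto t^\alpha$ is strictly increasing on $[0,\infty)$, the minimizer is structurally a Euclidean MST, so the same tree $T$ serves for both $|e|$- and $|e|^\alpha$-weightings.

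\emph{Step 2 (Constructing a Hamiltonian cycle from $T$).} I invoke the recursive depth-first construction of Funke, Laue, Lotker, and Naujoks. Given $T$ from Step 1, rooted arbitrarily, the algorithm splices local Hamiltonian paths across the children of each internal node in an order chosen to enforce two locality guarantees on the output cycle $H$: (i) every edge $uv\in H\setminus T$ (a \emph{shortcut}) corresponds to a path in $T$ of length at most $3$; and (ii) each edge of $T$ lies in the tree-paths of at most two shortcut edges of $H$.

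\emph{Step 3 (Power-mean bound).} For a shortcut $uv$ with tree-path $u=w_0,w_1,\ldots,w_k=v$, $k\le 3$, the triangle inequality and convexity of $t\mapsto t^\alpha$ give
\[
|uv|^\alpha \le \Bigl(\sum_{i=1}^k|w_{i-1}w_i|\Bigr)^\alpha \le k^{\alpha-1}\sum_{i=1}^k|w_{i-1}w_i|^\alpha \le 3^{\alpha-1}\sum_{i=1}^k|w_{i-1}w_i|^\alpha.
\]
Partitioning $T=D\sqcup S$ into tree edges that are, respectively are not, edges of $H$, each $e\in D$ contributes $|e|^\alpha$ directly and at most one shortcut-path appearance, while each $e\in S$ appears in at most two shortcut paths. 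Summing,
\[
\cost_\alpha(H)\le \sum_{e\in D}(1+3^{\alpha-1})|e|^\alpha + \sum_{e\in S}2\cdot 3^{\alpha-1}|e|^\alpha \le 2\cdot 3^{\alpha-1}\,\text{MST}_\alpha(X),
\]
since $1+3^{\alpha-1}\le 2\cdot 3^{\alpha-1}$ for $\alpha\ge 1$. Combined with Step 1, this yields $\cost_\alpha(H)\le 2\cdot 3^{\alpha-1}\,\opt_\alpha(X)$, as required.

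\emph{Main obstacle.} The hard part is Step 2: ensuring the two locality properties. Without an a priori bound on the shortcut tree-path length $k$, the power-mean factor $k^{\alpha-1}$ is unbounded and no constant-factor approximation results; the recursive splicing rule of Funke et~al.\ is precisely what forces $k\le 3$. Extending from $\RR^2$ to $\RR^k$ is straightforward, because only the triangle inequality and convexity of $t\mapsto t^\alpha$ are used in Step 3.
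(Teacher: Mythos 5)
The paper does not actually prove Theorem~\ref{thm:flln11}; it is cited from~\cite{FLLN11} as an external result, and the remark after Inequality~\eqref{eq:new} notes only that it can be used as an alternative to Lemma~\ref{lem:bm92b}. The substantive comparison is therefore with Lemma~\ref{lem:bm92b}, which (for $\alpha=k$) establishes exactly the tree-to-cycle conversion at the heart of your Steps~2--3 with the same constant $\tfrac{2}{3}\cdot 3^{\alpha}=2\cdot 3^{\alpha-1}$. Your argument follows the same template: the Sekanina-style result (Lemma~\ref{lem:sek}) that $T^3$ has a Hamiltonian cycle using every tree edge exactly twice with shortcut paths of length at most~$3$, followed by convexity. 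The one genuine difference is where Jensen is applied: you bound each shortcut individually by $|uv|^\alpha\le 3^{\alpha-1}\sum_i|w_{i-1}w_i|^\alpha$ and sum, whereas the paper groups the used tree-edge lengths into three vectors $v_1,v_2,v_3$ with $f=v_1+v_2+v_3$, applies Minkowski in $\ell^\alpha$, and then power-mean once globally. Both routes produce $2\cdot 3^{\alpha-1}$; yours is slightly more elementary, the paper's mirrors Bollob\'as--Meir's original formulation. Two small notes: (a) your Step~2(ii) states each tree edge lies in ``at most two shortcut paths,'' but the invariant you actually need (and which Lemma~\ref{lem:sek} provides) is that each tree edge is used at most twice in total, counting direct cycle edges as well; your $D/S$ bookkeeping in Step~3 silently uses the correct version, so the final factor is unaffected, but the statement of (ii) as written overcounts. (b) Your Step~1 is correct but slightly overexplains: the key fact is simply that a MST for Euclidean weights is also a MST for $|e|^\alpha$ weights because the MST depends only on the relative order of edge weights, which is preserved by any strictly increasing transformation.
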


If for some $\tau >1$ distances of a TSP instance satisfy
\[ \dist(x,z) \leq \tau \left( \dist(x,y) + \dist(y,z) \right), \]
for any three vertices $x,y,z$, we say that they satisfy the \emph{relaxed triangle inequality},
see~\cite{And01,BC00,Mom15}.
It is important to note that the metric with Euclidean distances to the power $\alpha$ satisfies
the relaxed triangle inequality with $\tau = 2^{\alpha-1}$; see~\cite{BNS+10,FLLN11}.
For $\alpha=2$ (\ie, TSP with squared distances), Theorem~\ref{thm:flln11} yields a $6$-approximation.
De Berg, van Nijnatten, Sitters, Woeginger and Wolff~\cite{BNS+10} obtained a $5$-approximation.

\paragraph{Our results.}

The upper bound $s_k^{\texttt{HC}}(n) \leq 9 \left(\frac23 \right)^{1/k} \cdot \sqrt{k}$, where $k \geq 3$, has stood
unchanged for $30$ years~\cite{BM92}. Here we obtain several improvements.

\begin{theorem} \label{thm:s}
  The following bounds are in effect:
  \begin{itemize} \itemsep 1pt
  \item [{\rm (i)}] There exists a $4$-element point set in $[0,1]^3$ such that the cost of the shortest tour
    is at least $2^{7/6}=2.24\ldots$.   
Consequently, $s_3^{\texttt{HC}}(n) \geq 2^{7/6}=2.24\ldots$, for every $n \geq 4$. 
\item [{\rm (ii)}] Let $X$ be an $n$-element point set in the $k$-dimensional unit cube $[0,1]^k$, $k \geq 3$.
  Then there exists a tour $H=x_1, x_2, \ldots, x_n$ through the
  $n$ points, such that
  $\left(\sum_{i=1}^n |x_i - x_{i+1}|^k \right)^{1/k} \leq 3 \sqrt5 \left(\frac23 \right)^{1/k} \cdot \sqrt{k}$.
  Consequently,
  $s_k^{\texttt{HC}}(n) \leq 3 \sqrt5 \left(\frac23 \right)^{1/k} \cdot \sqrt{k} = 6.708\ldots \cdot \left(\frac23 \right)^{1/k} \cdot \sqrt{k}$.
\item [{\rm (iii)}]  $H$ can be computed in time proportional to that needed for computing a MST of the points,
  in particular, in subquadratic time. 
\end{itemize}
\end{theorem}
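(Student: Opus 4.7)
\textbf{Proof proposal for Theorem~\ref{thm:s}.}

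For part~(i), the plan is to exhibit four vertices of $[0,1]^3$ forming a regular tetrahedron, for instance the alternating corners $(0,0,0)$, $(1,1,0)$, $(1,0,1)$, $(0,1,1)$; all six pairwise Euclidean distances equal $\sqrt 2$. Any Hamiltonian cycle on four points uses the four edges of a $4$-cycle, so the unscaled cost is $4\cdot(\sqrt 2)^3 = 2^{7/2}$ regardless of the cyclic order, giving scaled cost $(2^{7/2})^{1/3}=2^{7/6}$. For $n>4$, the monotonicity $s_3^{\texttt{HC}}(n)\geq s_3^{\texttt{HC}}(4)$ noted in the introduction (obtained by clustering extra points near existing ones) delivers the claim.

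For parts~(ii) and~(iii), the plan is to reduce the problem to bounding the MST cost. The central estimate I would prove is
$$
S_k\bigl(\mathrm{MST}(X)\bigr)\;\leq\;(5k)^{k/2},\qquad\text{i.e.,}\qquad s_k^{\texttt{ST}}(X)\leq\sqrt{5k},
$$
for every $X\subseteq[0,1]^k$. Feeding this into Theorem~\ref{thm:flln11} of Funke, Laue, Lotker, and Naujoks---whose recursive algorithm turns an MST into a Hamiltonian cycle $H$ with $S_k(H)\leq 2\cdot 3^{k-1}\cdot S_k(\mathrm{MST})$ in time linear in the MST size---yields
$$
s_k^{\texttt{HC}}(X)\;\leq\;(2\cdot 3^{k-1})^{1/k}\cdot\sqrt{5k}\;=\;3\sqrt 5\left(\tfrac23\right)^{1/k}\sqrt k,
$$
which is part~(ii). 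Part~(iii) then follows because an MST of $n$ points in $\RR^k$ can be computed in subquadratic time by standard geometric algorithms, and the Funke et al.\ construction adds only $O(n)$ overhead on top of the MST computation.

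The main obstacle is the MST bound with the explicit constant $\sqrt 5$. My approach is to exploit the classical empty-lens property of the MST: for any edge $e=uv$ of length $\ell$, no other point of $X$ lies inside $B(u,\ell)\cap B(v,\ell)$, a region of volume $\Theta_k(\ell^k)$. Grouping MST edges into dyadic length classes and charging their reserved lens volumes against a small expansion of $[0,1]^k$ already yields $\sum_e|e|^k=O_k(k^{k/2})$. Tightening the geometric constants down to $5^{k/2}$ is the delicate step; the plan is to refine the packing using the $60^\circ$-angle property at each MST vertex together with a careful scale-by-scale accounting, paralleling but strengthening the recursive cube partition of Bollob\'as and Meir (which loses constants by being tailored to closed tours rather than trees).
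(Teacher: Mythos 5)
Your part~(i) matches the paper exactly: the same four alternate vertices of the cube, which form a binary code with minimum Hamming distance $2$, so every Hamiltonian $4$-cycle has unscaled cost $4\cdot(\sqrt 2)^3=2^{7/2}$ and scaled cost $2^{7/6}$, and the monotonicity of $s_3^{\texttt{HC}}(n)$ gives the claim for all $n\geq 4$. Your conversion step from an MST bound to a Hamiltonian-cycle bound via Theorem~\ref{thm:flln11} is also legitimate; the paper itself notes at the end of Section~\ref{subsec:UB} that the Funke--Laue--Lotker--Naujoks theorem gives an alternative route equivalent to its own use of Lemma~\ref{lem:bm92b}, and both carry the same factor $2\cdot 3^{k-1}=\tfrac23\cdot 3^k$.

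The gap is in the central claim $S_k(\mathrm{MST}(X))\leq (5k)^{k/2}$, which you defer. First, the ``empty-lens'' property is the wrong tool: for distinct MST edges the lenses $B(u,\ell)\cap B(v,\ell)$ are not pairwise disjoint (already for three collinear points the two adjacent lenses overlap near the shared vertex), so you cannot charge their volumes against a fixed container. The paper instead uses Lemma~\ref{lem:bm92}, a genuinely different statement due to Bollob\'as and Meir: the balls of radius $\tfrac14|e|$ \emph{centered at the midpoints} of MST edges are pairwise disjoint. Second, even granting a correct packing setup, the constant $\sqrt 5$ does not come from $60^\circ$-angle considerations or a ``recursive cube partition''; it comes from a new containment lemma (Lemma~\ref{lem:0.56}): writing $U=[-\tfrac12,\tfrac12]^k$, one shows for all $u,v\in U$ that
\[
\frac{|u+v|}{2}+\frac{|u-v|}{4}\leq\frac{\sqrt 5}{4}\sqrt k,
\]
with equality attainable, so that every Bollob\'as--Meir midpoint ball lies in a ball of radius $\tfrac{\sqrt5}{4}\sqrt k$ about the cube's center. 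The original Bollob\'as--Meir argument bounded the two terms separately by $\tfrac12\sqrt k$ and $\tfrac14\sqrt k$, giving $\tfrac34\sqrt k$; the paper's improvement observes that $|u+v|$ and $|u-v|$ cannot both be extremal simultaneously (since $|u+v|^2+|u-v|^2=2(|u|^2+|v|^2)\leq k$), and optimizes the resulting one-parameter family. Your proposal shows no awareness of this inequality, and the ``$60^\circ$ plus scale-by-scale accounting'' sketch neither compensates for the lens-overlap problem nor identifies where $\sqrt 5$ would come from, so as written the key step is unproved.
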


Several sharper bounds are obtained for sufficiently large $k$. 
We note that the conjectured optimal configuration consisting of a diameter pair of the cube
as well as the lower bound construction we will present for $k=3$ in Theorem~\ref{thm:s}~(i)
are subsets of $\{0,1\}^k$. This raises the natural question if one can determine the maximum of $s_k^{\texttt{HC}}(X)$
if the point set $X$ is in $\{0,1\}^k$. We answer this question.

\begin{theorem} \label{thm:vertices}
There exists an integer $k_0$ such that for all $k\geq k_0$ the following holds.
If $X$ is an arbitrary subset of vertices of $\{0,1\}^k$, then there exists a Hamiltonian cycle $H$ through $X$ such that
$s_k(H) \leq 2^{1/k}\sqrt{k}$.  
\end{theorem}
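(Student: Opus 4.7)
Because every point of $X$ is a cube vertex, any edge $e=xy$ between two points of $X$ satisfies $|e|^2=h(x,y)$, where $h(\cdot,\cdot)$ denotes Hamming distance; hence $|e|^k=h(e)^{k/2}$, and the target $s_k(H)\le 2^{1/k}\sqrt k$ is equivalent to exhibiting a Hamiltonian cycle $H$ on $X$ with
\[
\sum_{e\in H} h(e)^{k/2}\le 2k^{k/2}.
\]
The driving observation is that $(d/k)^{k/2}$ decays exponentially in $k$ whenever $d/k<1$, so only edges whose Hamming distance is very close to $k$ consume real budget; everything else is essentially free compared to the single near-diameter edge of cost $\approx k^{k/2}$ that the antipodal pair forces.

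The plan is to split on the Hamming diameter $D$ of $X$. In the \emph{small-diameter} regime $D\le\alpha k$ for some fixed $\alpha<\alpha^{*}\approx 0.29$ (where $\alpha^{*}$ is the unique solution in $(0,\tfrac12)$ of $4^{\eta(\alpha^{*})}\alpha^{*}=1$, with $\eta(\alpha)=-\alpha\log_2\alpha-(1-\alpha)\log_2(1-\alpha)$ the binary entropy), any Hamiltonian cycle on $X$ works: since $X$ sits in a Hamming ball of radius $D$ around any of its points, $|X|\le 2^{k\eta(\alpha)}$ by the entropy bound, and every edge has $h(e)\le\alpha k$, so
\[
\sum_{e\in H} h(e)^{k/2}\le |X|\cdot(\alpha k)^{k/2}\le k^{k/2}\bigl(4^{\eta(\alpha)}\alpha\bigr)^{k/2}\le 2k^{k/2}
\]
once $k\ge k_0$. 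In the \emph{large-diameter} regime $D>\alpha^{*}k$, I would fix a diameter pair $u,v\in X$, pick a coordinate $j$ with $u_j\ne v_j$, partition $X=A\sqcup B$ by the value of $x_j$, identify $A$ and $B$ with subsets of $\{0,1\}^{k-1}$, and recurse on $k$ to get Hamiltonian cycles $H_A,H_B$. I would then merge them into a single Hamiltonian cycle on $X$ by removing one edge from each and inserting two cross-edges, choosing the deleted edges to be near-diameter inside their halves and the inserted cross-edges to agree with $A,B$ along the split direction (so their Hamming distance is as close to $1$ as possible), which makes the net merge cost small.

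The main obstacle will be the merging accounting. A naive induction loses a factor approaching $4(1-1/k)^{k/2}\to 4/\sqrt e\approx 2.43$, already above the target constant $2$, so one cannot simply pass the bound from dimension $k-1$ to $k$ by appending two cheap cross-edges. To close this gap I would strengthen the inductive statement to a Hamiltonian-\emph{path} statement with prescribed (or chosen) endpoints, so that the merge can be arranged to be essentially cost-neutral; alternatively, I would stop the recursion as soon as a subcube enters the small-diameter regime, where the bound above is already tight. Taking $k_0$ sufficiently large then absorbs the unavoidable constant-factor slack from the asymptotics $(1-1/k)^{k/2}\to e^{-1/2}$.
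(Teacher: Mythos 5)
The small-diameter case of your plan is sound: if $X$ has Hamming diameter $D\le\alpha k$ with $4^{\eta(\alpha)}\alpha\le 1$, then $|X|\le 2^{k\eta(\alpha)}$ by the volume/entropy bound and every edge cost is at most $(\alpha k)^{k/2}$, so any Hamiltonian cycle has $S_k(H)\le(4^{\eta(\alpha)}\alpha)^{k/2}k^{k/2}\le 2k^{k/2}$ for large $k$. But this only covers $D\lesssim 0.29k$, and the large-diameter case is where the real work lies; there your argument is a plan with an acknowledged hole, not a proof.

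Two concrete problems with the recursive split. First, the accounting you yourself flag is genuinely fatal to the naive recursion: applying the inductive hypothesis in dimension $k-1$ to each half and converting powers (using $h(e)\le k-1$, so $h(e)^{k/2}\le(k-1)^{1/2}h(e)^{(k-1)/2}$) already yields $4(k-1)^{k/2}\to 4e^{-1/2}k^{k/2}\approx 2.43\,k^{k/2}$ \emph{before} any merge cost, which exceeds the target $2k^{k/2}$; neither proposed fix (Hamiltonian paths with prescribed endpoints, or stopping early) is developed far enough to see that it closes a $20\%$ gap, and the first fix still seems to double a dimension-$(k-1)$ bound. Second, the merge step asserts that the two cross-edges can be made to have Hamming distance close to $1$. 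This is not generally achievable: a cross-edge joins a point with $x_j=0$ to a point with $x_j=1$, and beyond coordinate $j$ the two halves can be completely misaligned (e.g. if $A$ and $B$ are antipodal sub-codes), so the cheapest cross-edge can have Hamming distance close to $k$. Finally, splitting on a coordinate where a diameter pair differs reduces the ambient dimension by one but need not reduce the diameter of either half, so there is no guarantee the recursion reaches the small-diameter regime before the accumulated merge and conversion losses blow past $2k^{k/2}$.

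The paper proves the theorem by an entirely different route that avoids any recursion on $k$. It runs a greedy algorithm (Algorithm~1) that adds edges in nondecreasing length while maintaining a disjoint union of paths, and shows (Lemma~\ref{bincode}) that at the moment the shortest unused edge has squared length $\ge j$, the surviving path-endpoints form a binary code of length $k$ and minimum distance $j$; hence the number of edges with $|e|^2\ge j$ is bounded by $A(k,j)$. Combined with the improved Singleton-type bound $A(k,d)\le 2^{k-\frac{3}{2}d+2}$ for $d<\frac{2}{3}k$ (Lemma~\ref{lem:singleton2}) and the fact that at most one ``very long'' edge survives (with $|e|\le\sqrt{k-1}$), it classifies edges into four length ranges and shows the total is $<k^{k/2}$, so closing the path to a cycle costs at most one more $k^{k/2}$. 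You would be better served looking for a single global argument of this flavor --- one that directly bounds, for each threshold $j$, how many edges of squared length $\ge j$ a good Hamiltonian cycle must contain --- rather than a divide-and-conquer on dimension, because the power-$k/2$ weighting is far too sensitive to be passed cleanly through a change of ambient dimension.
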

The ``sufficiently large''
requirement for Theorem~\ref{thm:vertices} is in fact quite modest. The threshold $k_0$ is below $30$. Note that the bound in Theorem~\ref{thm:vertices} is attained for $|X|=2$. 

\begin{theorem} \label{thm:asymp-st}
  For the family of minimum spanning trees, we have
  \[ s_k^{\texttt{ST}} \leq  \sqrt{k} \ (1+o_k(1)).  \]
  Apart from the error term, this bound is best possible.
\end{theorem}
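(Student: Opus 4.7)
The lower bound is immediate: taking $X=\{\mathbf{0},\mathbf{1}\}\subset[0,1]^k$, whose unique spanning tree has a single edge of length $\sqrt{k}$, gives $s_k^{\texttt{ST}}\ge\sqrt{k}$. For the upper bound, let $T$ be the Euclidean MST of an arbitrary $n$-point set $X\subset[0,1]^k$; note that this $T$ also minimizes $S_k(G)$ over all spanning trees, since MSTs are invariant under monotone edge-weight transformations. The plan is to prove $S_k(T)\le k^{k/2}(1+o_k(1))^k$, equivalently $s_k^{\texttt{ST}}(X)\le\sqrt{k}(1+o_k(1))$.

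The central tool will be a Plotkin-type inequality: if $m$ points lie in $[0,1]^k$ at pairwise Euclidean distance at least $d$ with $d^2>k/2$, then $m\le 2d^2/(2d^2-k)$. This follows from $\binom{m}{2}d^2\le\sum_{i<j}|x_i-x_j|^2=m\sum_i|x_i-\bar x|^2\le m^2 k/4$, the last inequality using the coordinatewise Popoviciu bound $\sum_i(x_{i,\ell}-\bar x_\ell)^2\le m/4$ (valid since $x_{i,\ell}\in[0,1]$). Applied to $T$ via the cut property of MSTs, if $N(r)$ denotes the number of MST edges of length at least $r$, then deleting them partitions $T$ into $N(r)+1$ components that are mutually $r$-separated, so $N(r)+1\le 2r^2/(2r^2-k)$ for $r>\sqrt{k/2}$. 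In particular $T$ has at most one edge of length exceeding $\sqrt{3k/4}$; that long edge contributes at most $k^{k/2}$ to $S_k(T)$. For the remaining (short) edges I will use the layered representation $S_k(T)=\int_0^{\sqrt{k}}k r^{k-1}N(r)\,dr$, applying the Plotkin bound $N(r)\le k/(2r^2-k)$ on $(\sqrt{k/2},\sqrt{3k/4}]$ and a volume-packing bound on $[0,\sqrt{k/2}]$ (disjoint balls of radius $r/2$ around component representatives fit in $[-r/2,1+r/2]^k$, and the unit-ball volume $V_k=\pi^{k/2}/\Gamma(k/2+1)$ decays super-exponentially in $k$). Exploiting the geometric decay factor $(3/4)^{k/2}$ on individual edge weights, the short-edge contribution will be shown to be $o_k(k^{k/2})$.

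The hardest step will be controlling the integral near the transition at $r=\sqrt{k/2}$, where the Plotkin bound $k/(2r^2-k)$ becomes non-integrable: a finer estimate there (e.g., the coding-theoretic boundary bound $|C|\le 2k$ at minimum distance $\sqrt{k/2}$, or its continuous analogue) must be spliced in to ensure integrability and a small enough overall constant, while the volume-packing bound takes over for very small $r$. The resulting error term $o_k(1)$ in the final estimate will accommodate finite-$k$ extremal configurations such as the $2^k$ cube-vertex set, which yields $s_k^{\texttt{ST}}=(2^k-1)^{1/k}$ --- a value exceeding $\sqrt{k}$ for small $k$ but dominated by the $\sqrt{k}$ term as $k\to\infty$.
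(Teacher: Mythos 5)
Your lower bound is fine, and your Plotkin-type estimate $m\le 2d^2/(2d^2-k)$ is essentially the paper's Lemma~3.4 (lem:sym), proved by the same variance/averaging argument; the observation that at most one MST edge exceeds $\sqrt{3k/4}$ in length and that this edge carries the dominant $k^{k/2}$ contribution likewise matches the role of the single leftover edge $f$ in the paper's proof, and the layer-cake integral is a reasonable substitute for the geometric sequence of thresholds $a_i=(1+1/k)^i k^{-3/4}$ that the paper uses.

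The gap is in the two bounds you plan to plug into the integral for $r\le\sqrt{3k/4}$; neither is strong enough, and the intended fix does not exist. Packing disjoint radius-$r/2$ balls around component representatives gives $N(r)+1\le (1+r)^k/\bigl(V_k(r/2)^k\bigr)$, which at $r=\alpha\sqrt k$ is, by Stirling, of order $\sqrt{\pi k}\,e^{\sqrt k/\alpha}\bigl(2k/(\pi e)\bigr)^{k/2}$; this does \emph{not} shrink with $\alpha$, because a Euclidean ball of radius proportional to $\sqrt k$ already swallows a constant fraction of $[0,1]^k$. Multiplying by $r^k=\alpha^k k^{k/2}$ gives something of order $\bigl(\alpha^2\cdot 2k^2/(\pi e)\bigr)^{k/2}$, which for any fixed $\alpha>0$ exceeds $k^{k/2}$ by a factor that grows exponentially in $k$; the resulting bound on $\int_{\sqrt k/4}^{\sqrt{k/2}}kr^{k-1}N(r)\,dr$ is $\gg k^{k/2}$, not $o(k^{k/2})$. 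The same volume bound also makes the integral diverge as $r\to0$, since $n$ is unbounded; the paper instead handles the short edges via the Bollob\'as--Meir ball packing around edge \emph{midpoints} (Lemma~2.2), which bounds $\sum_{|e|\le k^{-1/4}}|e|^k$ directly without passing through $N(r)$. Finally, the ``continuous analogue'' of the boundary Plotkin bound $|C|\le 2k$ is false: the paper's Lemma~4.2 shows that at $\alpha=1/\sqrt2$ one can have on the order of $1.006^k$ pairwise $\alpha\sqrt k$-separated points in $[0,1]^k$. What actually closes the argument is the box-partition machinery (Lemmas~4.1--4.3), which splits $[0,1]^k$ into $\beta^k$ congruent sub-boxes, applies the averaging Lemma~3.3 in a crowded box, and thereby yields the uniform \emph{product} bound $|Y|\alpha^k\le 0.999^k$ for all $\alpha\in(0,0.99)$ --- exponentially stronger than anything sphere packing gives. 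That lemma is the essential missing ingredient; without it the whole middle range $r\in(k^{-1/4},\sqrt{3k/4})$ is uncontrolled.
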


By transforming a minimum spanning tree into a Hamiltonian cycle by using the method of Sekanina~\cite{Sek60}
and Bollob{\'a}s and Meir~\cite{BM92}, we obtain $s_k^{\texttt{HC}} \leq  3\sqrt{k} \ (1+o_k(1))$.  
A further refinement based on a two-phase algorithm and a new greedy algorithm that maintains a collection of spanning paths
allows us to obtain the following sharper bound.

\begin{theorem} \label{thm:asymp-hc+}
  For the family of Hamiltonian cycles, we have
\[ s_k^{\texttt{HC}} \leq  2.91 \sqrt{k} \ (1+o_k(1)).  \]
\end{theorem}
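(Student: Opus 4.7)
The starting point is Theorem~\ref{thm:asymp-st}, which furnishes a spanning tree of scaled cost $\sqrt{k}(1+o_k(1))$. Converting a spanning tree into a Hamiltonian cycle by the Sekanina--Bollob\'as--Meir method (finding a Hamiltonian cycle in the cube of the tree) incurs a multiplicative factor of $3$ in $s_k$: every shortcut edge traverses at most three tree edges and each tree edge is reused at most twice, so the power-mean inequality on the $k$-th powers yields a factor $3^{1-1/k}\cdot 2^{1/k}\to 3$. This is the route that gives the weaker estimate $s_k^{\texttt{HC}}\le 3\sqrt{k}(1+o_k(1))$ remarked upon just before the theorem. The aim is to trim the $3$ down to $2.91$ by replacing the second step with a direct construction that avoids the worst-case tripling.

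My plan is a two-phase construction. In Phase~1, rather than first building a spanning tree and then shortcutting, I would run a greedy algorithm that maintains at every moment a collection of vertex-disjoint \emph{spanning paths}: scan the edges in increasing order of Euclidean length and accept $\{u,v\}$ exactly when both $u$ and $v$ are currently endpoints of distinct paths (so no vertex ever reaches degree three and no premature short cycle is created). This is essentially the multi-fragment heuristic, but what matters here is that the $k$-th power norm penalises long edges so heavily that the edges skipped for degree reasons are dominated by longer accepted edges and contribute only an $o_k(1)$ term. A standard exchange argument against a minimum spanning tree shows that the accepted edges form a collection of paths whose $s_k$-cost is at most $(1+o_k(1))$ times the MST cost furnished by Theorem~\ref{thm:asymp-st}.

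Phase~2 stitches the surviving paths into a single Hamiltonian cycle. Since only a small number of paths remain and their endpoints lie in $[0,1]^k$, one closes them using additional edges whose total $k$-th power cost is bounded in terms of the cube diameter $\sqrt{k}$. Crucially, because any shortcut performed in Phase~2 concatenates at most two path segments rather than three tree edges, the effective Jensen/power-mean factor here is governed by a constant strictly below $3$; combining the contributions of the two phases and optimising the threshold between them yields the bound $2.91\sqrt{k}(1+o_k(1))$.

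The main obstacle is the fine analysis of Phase~1: a naive exchange against the MST only recovers the Sekanina factor $3$ in the limit, so one must show that each greedy acceptance can be amortised against MST edges with average multiplicity strictly less than $3$, and then translate this charging into an inequality on $k$-th powers via the power-mean inequality. Achieving the specific constant $2.91$ will require tracking the geometry of which vertices become path interiors (and hence ineligible for further edges) and exploiting the concentration of pairwise distances around $\sqrt{k/6}$ that is characteristic of high-dimensional $[0,1]^k$.
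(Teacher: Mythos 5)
Your Phase~1 claim---that a greedy path-builder (the multi-fragment heuristic) accepts edges whose total $k$-th power cost is at most $(1+o_k(1))$ times the MST cost---is exactly where the proposal breaks down, and you implicitly acknowledge this at the end when you concede that ``a naive exchange against the MST only recovers the Sekanina factor $3$ in the limit.'' If the Phase~1 claim were true, adding one closing edge of length $\leq\sqrt{k}$ would already give $s_k^{\texttt{HC}} \leq \sqrt{k}\,(1+o_k(1))$, which would essentially settle the Bollob{\'a}s--Meir conjecture and is far stronger than the theorem you are trying to prove; that alone should signal that the exchange argument cannot be made to work as stated. The technical obstruction is that Lemma~\ref{lem:bm92} (disjointness of the midpoint balls of radius $|e|/4$) is a property of \emph{minimum spanning tree} edges, not of edges chosen under the degree-$\leq 2$ path constraint, so there is no obvious ball-packing bound for the greedy path edges, and the proposed ``average multiplicity $<3$'' amortisation is left entirely open.

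The paper's proof achieves $2.91$ by a different allocation of where the factor $3$ is paid, and you are missing its central ingredient. First it runs Kruskal's rule restricted to edges of length at most $k^{-1/4}$, producing a subforest of the MST to which Lemma~\ref{lem:bm92} applies; because all these edges are short, the disjoint balls live in a cube only slightly larger than $[0,1]^k$, and the volume comparison gives $\sum_{e\in F}|e|^k \leq (0.97\sqrt{k})^k$ rather than the $(\sqrt{k})^k(1+o_k(1))$ bound for the full MST. The Sekanina conversion (Lemma~\ref{lem:bm92b}) is then applied \emph{only to these short-edge trees}, so the multiplicative factor $3$ lands on $0.97\sqrt{k}$ and produces $(2.91\sqrt{k})^k$. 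The resulting cycles are broken into paths, and only then is your greedy path-builder (Algorithm~1) invoked---but merely to stitch the remaining fragments; those added edges are controlled by the geometric scales $a_i$ and Lemma~\ref{lem:hypercube3}, and contribute $o_k(1)\cdot(\sqrt{k})^k$ rather than another factor. So $2.91 = 3 \times 0.97$ is the product of the unavoidable Sekanina loss and the ball-packing gain from restricting to short edges, not the outcome of an amortised-multiplicity-below-$3$ bound. Without the short-edge restriction and the two-phase split (tree$\to$cycle on short edges, greedy completion on the rest), there is no route in your sketch to any constant below $3$.
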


When the number of points $n$ is bounded by a constant (independent of $k$),
we can obtain a better asymptotic bound, close to the conjectured value $2^{1/k}\sqrt{k}$.

\begin{theorem} \label{thm:fixedn}
\label{asymp}
Let $n\geq 2$ be fixed.   For the family of Hamiltonian cycles, we have
\[ s_k^{\texttt{HC}}(n)= 2^{1/k}\sqrt{k} \ (1+o_k(1)).  \]
\end{theorem}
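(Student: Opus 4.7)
The plan is to sandwich $s_k^{\texttt{HC}}(n)$ between two quantities that, for any fixed $n$, both have the asymptotics $\sqrt{k}(1+o_k(1))$ as $k \to \infty$. The lower bound $s_k^{\texttt{HC}}(n) \geq 2^{1/k}\sqrt{k}$ is already recorded in the introduction via the two-opposite-vertices example together with the monotonicity $s_k^{\texttt{HC}}(n) \geq s_k^{\texttt{HC}}(2)$, so no additional work is needed on that side.

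For the matching upper bound I would use only the coarsest possible estimate on edge lengths. Let $X \subseteq [0,1]^k$ with $|X|=n$ and let $H$ be any Hamiltonian cycle on $X$. Since every edge of $H$ has length at most the diameter $\sqrt{k}$ of the cube, and $H$ has exactly $n$ edges,
\[
S_k(H) = \sum_{e \in H} |e|^k \leq n \cdot k^{k/2},
\]
whence $s_k^{\texttt{HC}}(X) \leq n^{1/k}\sqrt{k}$. Taking the supremum over all $n$-point sets $X \subseteq [0,1]^k$ yields $s_k^{\texttt{HC}}(n) \leq n^{1/k}\sqrt{k}$.

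To close the loop, observe that for $n$ fixed and $k \to \infty$, both $n^{1/k} = \exp((\ln n)/k)$ and $2^{1/k} = \exp((\ln 2)/k)$ tend to $1$, so $n^{1/k}\sqrt{k} = 2^{1/k}\sqrt{k}\,(1+o_k(1))$. Combined with the lower bound, this gives $s_k^{\texttt{HC}}(n) = 2^{1/k}\sqrt{k}\,(1+o_k(1))$, as claimed. There is no substantive obstacle here: the conceptual point of the theorem is simply that once $n$ is a constant, the more refined machinery of Theorems \ref{thm:s} and \ref{thm:asymp-hc+} is unnecessary, because the crude ``$n$ edges, each of length at most $\sqrt{k}$'' bound already absorbs the prefactor into the $(1+o_k(1))$ error and matches the diameter-pair lower bound to leading order.
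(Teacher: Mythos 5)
Your proof is correct and genuinely simpler than the paper's. The paper runs the greedy path-building Algorithm~1 and invokes Lemma~\ref{lem:sym} (applied to three points) to argue that every edge of the constructed path except the last one has length at most $\sqrt{2k/3}$, and the closing edge is at most $\sqrt{k}$; this yields $S_k(H) \leq 2(\sqrt{k})^k + (n-2)(\sqrt{2k/3})^k$. You instead use only the trivial bound that every edge of any Hamiltonian cycle has length at most the cube diameter $\sqrt{k}$, giving $S_k(H) \leq n\,k^{k/2}$ and hence $s_k^{\texttt{HC}}(n) \leq n^{1/k}\sqrt{k}$. Both upper bounds, combined with the diameter-pair lower bound $2^{1/k}\sqrt{k}$, give the stated asymptotic once $n$ is fixed, since $(n/2)^{1/k} \to 1$. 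What the paper's extra work buys is a quantitatively sharper error term: the coefficient in front of $k^{k/2}$ is $2 + o_k(1)$ rather than $n$, so the relative correction over $2^{1/k}\sqrt{k}$ is $O\bigl((2/3)^{k/2}/k\bigr)$ (exponentially small) rather than your $O\bigl((\log n)/k\bigr)$. For the theorem as stated, with only a $(1+o_k(1))$ factor claimed, this refinement is unnecessary and your crude estimate already suffices. The authors' choice of method is best understood as part of a uniform toolkit: Algorithm~1 together with finer edge-length classification reappears in the proofs of Theorem~\ref{thm:vertices} and Theorem~\ref{thm:smalln}, where the sharper constants actually matter.
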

Note however, that in Theorem~\ref{asymp} we require $n$ to be constant; it does not imply
$s_k^{\texttt{HC}}= 2^{1/k}\sqrt{k} \ (1+o_k(1))$.

\medskip
The improved upper bounds in Theorem~\ref{thm:s} and~\ref{thm:asymp-hc+}, 
have implications for the existence of Hamiltonian paths  and perfect matchings
whose costs are bounded from above by constants depending on~$k$. 
These are discussed in Section~\ref{sec:remarks}.

\section{Hamiltonian cycles: exact upper and lower bounds} \label{sec:exact}

\subsection{An improved lower bound for $k=3$} \label{subsec:LB}

In this subsection we prove Theorem~\ref{thm:s}(i).
Consider the four-element point set
\[ X=\{(0,0,0),(0,1,1),(1,0,1),(1,1,0)\} \subset [0,1]^3. \]
$X$ is in fact a \emph{binary code} of length $3$ with \emph{minimum Hamming distance} $2$; see, \eg,
\cite[Ch.~5]{vL99}. As such, the corresponding Euclidean pairwise distances are at least $\sqrt2$.
Consequently, the unscaled cost of any TSP tour $H$ is at least $S_k(H)\geq 4 \cdot (\sqrt2)^3  = 11.31\ldots$.
On the other hand, the conjectured~\cite{BM92} optimal unscaled cost was $2 \cdot (\sqrt3)^3 =  10.39\ldots$.
\qed

\smallskip
It is possible that the new lower bound gives the right value of $s_3^{\texttt{HC}}(n)$ for $n \geq 4$, see
Conjecture~\ref{conj:cycle} in Section~\ref{sec:remarks}. 

\paragraph{Remark.} Interestingly enough, for $k=4$, there exist (at least) two different point sets,
one with $n=2$ and the other with $n=8$, whose shortest tours have the same cost $S_4^{\texttt{HC}}(X)$ as
the conjectured value, $S_4^{\texttt{HC}}(n) = 2 \cdot (\sqrt{4})^4 = 32$.
The former set consists of a pair of diagonally opposite vertices, say, $\{(0,0,0,0),(1,1,1,1)\}$.
This is in fact the point set that is behind the conjectured maximum cost for \emph{every} $k$.
The latter set is a binary code of length $4$ with minimum distance $2$; for example, one can take
the eight binary vectors with an even number of ones:
\begin{align*}
X = \{(0,0,0,0),(0,0,1,1),(0,1,0,1),(0,1,1,0),(1,0,0,1),  (1,0,1,0), (1,1,0,0), (1,1,1,1)\}.
\end{align*}
Then $S_4^{\texttt{HC}}(X) \geq 8 (\sqrt2)^4 = 32$ and this value can be attained; equivalently, $s_4^{\texttt{HC}}(X) \geq 2^{5/4}$.  
We were not able to find two different sets $X$ with $s_k^{\texttt{HC}}(X) \geq 2^{1/k} \cdot \sqrt{k}$ for any other $k \geq 5$.

\subsection{An improved upper bound for every $k \geq 3$} \label{subsec:UB}

In this section we prove the last two items in Theorem~\ref{thm:s}.
Our proof is modeled by that in~\cite{BM92}. It uses a ball packing argument based on the following lemma.
(A similar lemma, however, with smaller ball radii, can be found in~\cite{KLS06}.)

\begin{lemma}   \label{lem:bm92}
 {\rm (Bollob{\'a}s and Meir~\cite{BM92})}.
 Let $T=(V,E)$ be a minimum spanning tree for a finite point set $X \subset \RR^k$. For each edge
 $e=xy \in E$ let $B_{e}$ be the open ball of radius $\frac14 |x-y|$ centered at $\frac12 (x+y)$. Then
 $B_{e} \cap B_{e'} =\emptyset$ whenever $e$ and $e'$ are edges of $T$. The factor $\frac14$
 is as large as possible.
\end{lemma}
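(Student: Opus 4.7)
My plan is to prove the two balls are disjoint by contradiction, combining the local exchange (cut) property of minimum spanning trees with a parallelogram-style identity that links the midpoint separation to the four ``cross'' distances between the endpoints of the two edges. Suppose for contradiction that $B_e \cap B_{e'} \ne \emptyset$, equivalently $|m-m'| < (|e|+|e'|)/4$, where $m, m'$ are the midpoints of $e=xy$ and $e'=x'y'$. By symmetry assume $|e|\ge |e'|$, so the assumption gives $|m-m'| < |e|/2$.

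The algebraic tool I would record first is the identity
\[
  |x-x'|^2 + |x-y'|^2 + |y-x'|^2 + |y-y'|^2 \;=\; |e|^2 + |e'|^2 + 4\,|m-m'|^2,
\]
obtained by writing each vector $x-x'$, $x-y'$, etc.\ as $(m-m') \pm (x-m) \pm (x'-m')$ and observing that the bilinear cross-terms cancel in the sum (since $y-m=-(x-m)$ and $y'-m'=-(x'-m')$).

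Next I invoke the cut property of the MST twice. Deleting $e$ separates $T$ into two subtrees $C_x \ni x$ and $C_y \ni y$; since $e'$ survives this deletion, $x'$ and $y'$ lie on the same side, say $\{x',y'\}\subseteq C_x$. Any edge reconnecting the cut has length at least $|e|$, so $|y-x'| \ge |e|$ and $|y-y'|\ge |e|$. Deleting $e'$ analogously forces $\{x,y\}$ to lie in a single subtree of $T\setminus\{e'\}$; whichever it is, two of the remaining cross-distances are bounded from below by $|e'|$. In both subcases, three of the four squared cross-distances on the left of the identity are controlled---two by $|e|^2$ and one by $|e'|^2$. Substituting these lower bounds into the identity bounds the fourth squared cross-distance above by $4|m-m'|^2 - |e|^2$, which is strictly negative under the hypothesis $|m-m'| < |e|/2$, a contradiction.

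The main obstacle is the bookkeeping: one must check that the two MST cuts (removing $e$ and removing $e'$) together pin down enough of the cross-distances to close the inequality, and the two subcases arising from the second cut must both yield a usable bound---they do so by symmetry, but this needs to be written out carefully. For the tightness claim that $\tfrac14$ is the largest possible constant, I would exhibit a one-parameter family of MSTs---for instance two almost-coincident parallel edges of equal length joined by a vanishingly short bridge---for which the ratio $|m-m'|/(|e|+|e'|)$ approaches $\tfrac14$ from above, ruling out any uniform improvement.
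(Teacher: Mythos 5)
Your disjointness argument is correct and well structured. The midpoint identity
\[
  |x-x'|^2 + |x-y'|^2 + |y-x'|^2 + |y-y'|^2 = |e|^2 + |e'|^2 + 4|m-m'|^2
\]
checks out (writing $x=m+u$, $y=m-u$, $x'=m'+u'$, $y'=m'-u'$ with $|u|=|e|/2$, $|u'|=|e'|/2$, every mixed dot-product occurs an equal number of times with each sign). The two invocations of the cut property are also correct, and in either subcase one obtains two cross-distances bounded below by $|e|$ and one more (possibly overlapping one of the first two, in which case the $|e|$ bound dominates) bounded below by $|e'|$, leaving the fourth squared cross-distance at most $4|m-m'|^2 - |e|^2 < 0$ once $|m-m'| < |e|/2$. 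This closes the contradiction cleanly. The paper states Lemma~\ref{lem:bm92} without proof, citing~\cite{BM92}, so there is no in-text argument to compare against; your identity-plus-cut-property route is a natural and self-contained way to obtain it.

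The tightness discussion, however, does not hold up. A configuration with ``two almost-coincident parallel edges of equal length joined by a vanishingly short bridge'' cannot have both long parallel edges in the MST: if both were MST edges, their midpoints would be nearly coincident, contradicting precisely the inequality $|m-m'|\geq (|e|+|e'|)/4$ you just proved (and indeed, by the cut property, the cheap cross-edges between the two parallel segments would displace one of them from any MST). The collinear variant of your picture, a path $a$--$b$--$c$--$d$ with $|ab|=|cd|=1$ and $|bc|=\delta\to 0$, gives a midpoint ratio tending to $1/2$, not $1/4$. The correct extremal configuration is two MST edges of equal length $\ell$ sharing a vertex and meeting at an angle of exactly $60^\circ$ (for instance, two sides of an equilateral triangle): the midpoints are then at distance exactly $\ell/2$ while $|e|+|e'|=2\ell$, so the ratio equals $1/4$ precisely and the two open balls are tangent. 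Any constant larger than $1/4$ would make these balls overlap, which is the tightness assertion. You should replace your one-parameter family with this triangle example (or equivalently observe that your own proof forces equality only when $|e|=|e'|$, the two shared-endpoint distances are both exactly $|e|$, and the fourth cross-distance vanishes, which means the two edges share an endpoint and form an equilateral triangle).
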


In addition, a suitable order of traversing the vertices of a minimum spanning tree first developed by
Sekanina~\cite{Sek60,Sek71} is needed. The algorithm can be made to run in linear time.
A proof of this traversal result --- in slightly different terms --- also appears in~\cite{BM92}.
A~few definitions and notations (from~\cite{BM92}) are as follows. 
The \emph{$h$'th power} $G^h$ of a graph $G=(V,E)$ is the graph with vertex set $V$
and edge set $E(G^h) = \{xy \colon x,y \in V, 1 \leq d(x,y) \leq h\}$. Here $d(x,y)$ is the
distance between $x$ and $y$ in the graph.  Let $T$ be a tree and $xy \in E(T^h)$.
An edge $uv \in E(T)$ is said to be \emph{used by} $xy$ if the edge $uv$
  is on the unique path in $T$ (of length at most $h$) from $x$ to $y$. If $H$ is a subgraph of $T^h$,
  then an edge of $T$ is \emph{used $t$ times by} $H$ if it is used by $t$ edges of $H$. 

  \begin{lemma}   \label{lem:sek}
  {\rm (Sekanina~\cite{Sek60}, Bollob{\'a}s and Meir~\cite{BM92})}.
  Let $x$ be a vertex of a tree $T$ with at least $3$ vertices. Then $T^3$, the cube of $T$, contains a
  Hamiltonian cycle $H$ such that every edge of $T$ is used exactly twice by $H$, and one of the edges of
  $H$ incident to $x$ is an edge of $T$.
\end{lemma}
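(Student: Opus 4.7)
I would derive the lemma from a stronger inductive statement about Hamiltonian paths. Specifically, the plan is to prove by induction on $n = |V(T)|$ the following claim: for any tree $T$ on $n \geq 2$ vertices and any edge $uv \in E(T)$, there is a Hamiltonian path $P$ of $T^3$ from $u$ to $v$ such that the edge $uv$ is used exactly once by $P$ and every other edge of $T$ is used exactly twice. Granting this, the lemma follows at once: choose any neighbor $v$ of $x$ in $T$, apply the claim with $u = x$, and close the resulting path by appending the $T$-edge $vx$. The closing edge contributes the second use of $xv$, so every $T$-edge ends up used exactly twice, while the cycle-edge at $x$ just added is a $T$-edge, as required.

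The base cases $n = 2, 3$ are handled directly: for $n = 3$ the only tree is a $3$-vertex path and the $T^2$-edge in the unique Hamiltonian cycle of the cube provides the missing second usage of each $T$-edge. For the inductive step assume $n \geq 4$, delete the distinguished edge $uv$ to split $T$ into subtrees $T_u \ni u$ and $T_v \ni v$ of sizes $a$ and $b$ with $a + b = n$, and assume by symmetry that $b \geq 2$. In the first case $a = 1$, so $u$ is a leaf adjacent to $v$, pick any neighbor $w$ of $v$ in $T_v$, apply the inductive hypothesis to $T_v$ with edge $vw$ to obtain a Hamiltonian path $Q$ of $T_v^3$ from $v$ to $w$, and define $P$ by prepending $u$ to the reverse of $Q$. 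The prepended edge $uw$ has $T$-distance $2$ and hence lies in $T^3$, and it contributes one use apiece to the $T$-edges $uv$ and $vw$; combined with $Q$'s uses (the $T$-edge $vw$ once and every other $T_v$-edge twice) the counts balance correctly.

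In the remaining case $a \geq 2$, pick neighbors $u'$ of $u$ in $T_u$ and $v'$ of $v$ in $T_v$, apply the hypothesis to $T_u$ with edge $uu'$ to obtain a path $P_u$ from $u$ to $u'$, and to $T_v$ with edge $vv'$ to obtain a path $P_v$ from $v$ to $v'$. Splice them via the bridging edge $u'v'$ to form $P = P_u, u', v', P_v^{-1}$, a Hamiltonian path from $u$ to $v$ in $T^3$. The bridge lies in $T^3$ since $d_T(u', v') = 3$, and it contributes one use each to the $T$-edges $u'u$, $uv$, and $vv'$. Combined with the usage counts from $P_u$ and $P_v$, the edge $uv$ is used exactly once while $uu'$, $vv'$, and every other $T$-edge is used exactly twice, which closes the induction.

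The main obstacle is identifying the right inductive invariant: the cycle formulation of the lemma does not compose under splitting a tree at an edge, because after peeling off a subtree one loses track of which single $T$-edge at the glue point is "short" and where the path begins and ends. Passing to the path formulation fixes this by recording both endpoints and the unique underused $T$-edge, after which the only remaining care is verifying that each bridging edge has $T$-distance at most three so that it lies in $T^3$, and that the bookkeeping of $T$-edge uses balances to exactly two everywhere. The construction performs constant work per node beyond the recursive calls on disjoint subtrees, which is consistent with the linear-time algorithm asserted in the text preceding the lemma.
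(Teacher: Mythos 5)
Your proof is correct, and in fact the paper does not include its own proof of this lemma --- it is stated with citations to Sekanina and Bollob\'as--Meir and then used as a black box. So there is no ``paper's proof'' to compare against; what you have written is a valid self-contained derivation. The inductive strengthening you chose --- replacing the cycle statement by a Hamiltonian-path statement in which a single distinguished $T$-edge $uv$ is underused (used once), so that closing the path with $uv$ restores the count to two everywhere --- is exactly the standard device for proving Sekanina-type results, and your case split handles the bookkeeping correctly: in the splice case the bridge $u'v'$ has $d_T(u',v')=3$ so lies in $T^3$ and contributes one use each to $u'u,uv,vv'$; in the leaf case the edge $uw$ has $d_T(u,w)=2$ and contributes one use each to $uv,vw$; and each recursive sub-path contributes, within its own subtree, one use to its distinguished edge and two to every other edge. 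Since the path between two vertices of a subtree of $T$ stays inside that subtree, ``used by'' is consistent between the subtree and $T$, so the counts add. Two small cosmetic remarks: your $n=3$ base case is phrased in terms of the Hamiltonian cycle of the cube, whereas the claim being established is about a Hamiltonian path with a distinguished endpoint edge (the direct check $P=u,w,v$ does it); and $n=3$ is in fact redundant, since with $n=3$ the split always has $\{a,b\}=\{1,2\}$ and the leaf case of your induction already applies with the $n=2$ base case. Neither affects correctness.
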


It implies the following lemma which is not stated explicitly in \cite{BM92} but is used in the proof
of their Theorem 3. For completeness, we include their proof here.

\begin{lemma}   \label{lem:bm92b} {\rm (Bollob{\'a}s and Meir~\cite{BM92})}.
  Let $T$ be a spanning tree for a finite point set $X \subset \RR^k$. Then there exists a Hamiltonian cycle $H$
  on $X$ such that
 \begin{align*}
 S_k(H)\leq  \frac{2}{3}\cdot 3^k \cdot S_k(T).
 \end{align*}
\end{lemma}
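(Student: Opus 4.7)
The plan is to invoke Sekanina's traversal result (Lemma \ref{lem:sek}) to obtain a Hamiltonian cycle $H$ lying inside the cube $T^3$ whose edges double-cover $E(T)$, and then to bound the $k$-th power of each cycle edge by the sum of $k$-th powers of the (at most three) tree edges it traces. An exchange of summation then gives the stated constant.

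First, I apply Lemma \ref{lem:sek} to $T$ (the cases $|X|\leq 2$ are trivial) to produce a Hamiltonian cycle $H$ on $X$ with $E(H)\subseteq E(T^3)$ such that each edge of $T$ is used exactly twice by $H$. For every edge $xy\in E(H)$, let $P_{xy}$ denote the unique $x$--$y$ path in $T$; by the definition of $T^3$ it consists of at most three edges.

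Second, I bound each Euclidean cycle-edge length by the corresponding path length in $T$. The triangle inequality gives $|x-y|\leq \sum_{e\in P_{xy}} |e|$, and the convexity of $t\mapsto t^k$ on $[0,\infty)$ for $k\geq 1$, in the form of the power-mean inequality $(a_1+a_2+a_3)^k\leq 3^{k-1}(a_1^k+a_2^k+a_3^k)$ (padding with zeros if $P_{xy}$ has fewer than three edges), yields
\begin{equation*}
|x-y|^k \;\leq\; 3^{k-1}\sum_{e\in P_{xy}} |e|^k.
\end{equation*}

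Finally, summing over $xy\in E(H)$ and swapping the order of summation, the fact that each edge of $T$ is used exactly twice by $H$ gives
\begin{equation*}
S_k(H) \;\leq\; 3^{k-1}\sum_{xy\in E(H)}\sum_{e\in P_{xy}} |e|^k \;=\; 2\cdot 3^{k-1}\sum_{e\in E(T)} |e|^k \;=\; \tfrac{2}{3}\cdot 3^k\cdot S_k(T),
\end{equation*}
which is the desired bound. There is essentially no obstacle: the argument is a direct chaining of Lemma \ref{lem:sek} with the triangle and power-mean inequalities. The only modest care required is ensuring that the constant $3^{k-1}$ is uniform over paths of length $1$, $2$, or $3$, which is immediate since padding a shorter sum with zeros only weakens the power-mean bound.
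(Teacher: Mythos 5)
Your proof is correct and follows essentially the same approach as the paper: Sekanina's traversal (Lemma~\ref{lem:sek}), the triangle inequality to bound each cycle edge by its path length in $T$, the convexity of $t\mapsto t^k$ to handle the at-most-three tree edges per path, and the double-use count to collapse the sum. The only cosmetic difference is that you apply the power-mean bound coordinate-by-coordinate and then swap the order of summation, whereas the paper packages the same estimate via a decomposition $f=v_1+v_2+v_3$ together with Minkowski's inequality and Jensen; both routes yield the identical intermediate bound $\sum_i f_i^k \leq 3^{k-1}\sum_m \lVert v_m\rVert_k^k$ and hence the same constant $\tfrac{2}{3}\cdot 3^k$.
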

\begin{proof}   
Let $e_1,\ldots,e_n$ be the edges of a Hamiltonian cycle $H$ in $T^3$ guaranteed by Lemma~\ref{lem:sek}. 
Suppose that the edges of $T$ used by $e_i$ have lengths $d_{i_{1}},\ldots, d_{i_{\ell}}$, where $\ell \leq 3$. Set 
$f_i = d_{i_{1}}+\ldots+d_{i_{\ell}}$ and $f=(f_i)_{i\in [n]}\in \mathbb{R}^n$. Then $|e_i|\leq f_i$ for every $i$, each $f_i$ is a 
sum of at most three $d_j$'s and each $d_j$ occurs in 
the representations of two $f_i$'s. 

Now, we can form three vectors $v_1,v_2,v_3\in \mathbb{R}^n$
such that $f=v_1+v_2+v_3$, every coordinate of $v_i$ is a 
$d_j$ or 0, and every $d_j$ occurs exactly twice as a 
coordinate in the three $v_i$'s. Therefore, $\sum_{i=1}^3 \lVert v_i \rVert_k^k =  2\sum_{j=1}^{n-1}d_j^k$.
Hence, by the triangle-inequality and Jensen's inequality, 
\begin{align*}
  \lVert f \rVert_k &=\lVert v_1+v_2+v_3 \rVert_k \leq \sum_{i=1}^3 \lVert v_i \rVert_k\leq 3 \left(\frac{1}{3} \sum_{i=1}^3 \lVert v_i \rVert_k^k \right)^{1/k} \\
  &= 3 \left(\frac{2}{3} \sum_{j=1}^{n-1}d_j^k \right)^{1/k}=  3 \left(\frac{2}{3}\right)^{1/k} \cdot s_k(T),
\end{align*}
and thus
\begin{equation*}
S_k(H)=\sum_{i=1}^n |e_i|^k\leq \lVert f \rVert_k^k\leq \frac{2}{3}\cdot 3^k \cdot S_k(T).
\qedhere
\end{equation*}
\end{proof}

For convenience, here we work with the unit cube $U=[-1/2,1/2]^k$ centered at the origin $o=(0,\ldots,0)$.
Assume that $n \geq 3$, since it is clear otherwise that $s_k(H) \leq 2^{1/k} \cdot \sqrt{k}$.
 It was shown in~\cite{BM92}
that $\cup_{e\in T} B_e $ is contained in the ball of radius $0.75 \sqrt{k}$ centered at
the origin $o$. We next show that $\cup_{e\in T} B_e$ is contained in the ball of radius
$ \frac{\sqrt5}{4} \sqrt{k} =0.559 \ldots \cdot \sqrt{k}$ centered at $o$. The idea for the improvement
is that centers of balls corresponding to long edges of $T$ cannot be too far from the center of the cube.
The key step is the following.  

\begin{lemma} \label{lem:0.56}
Let $U=[-1/2,1/2]^k$ and $u,v \in U$. Then
\begin{equation} \label{eq:0.56}
  \frac{|u+v|}{2} + \frac{|u-v|}{4} \leq \frac{\sqrt5}{4} \sqrt{k}.
\end{equation}
 This inequality is the best possible.      
\end{lemma}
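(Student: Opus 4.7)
The plan is to reduce the inequality to a single quantity of the form $|u+v|^2+|u-v|^2$ via weighted Cauchy--Schwarz, and then finish with the parallelogram identity together with the cube constraint. Concretely, set $A=|u+v|^2$, $B=|u-v|^2$, $\alpha=\tfrac12$, $\beta=\tfrac14$, so that the left-hand side is $\alpha\sqrt{A}+\beta\sqrt{B}$. Cauchy--Schwarz applied to the vectors $(\alpha,\beta)$ and $(\sqrt A,\sqrt B)$ gives
\[
\alpha\sqrt{A}+\beta\sqrt{B} \;\leq\; \sqrt{\alpha^{2}+\beta^{2}}\,\sqrt{A+B} \;=\; \frac{\sqrt5}{4}\,\sqrt{A+B}.
\]
Notice the coefficient choice is exactly calibrated: $\sqrt{(1/2)^2+(1/4)^2}=\sqrt5/4$.

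Next I would apply the parallelogram identity $|u+v|^2+|u-v|^2=2|u|^2+2|v|^2$. Since $u,v\in U=[-1/2,1/2]^k$, each coordinate has absolute value at most $1/2$, so $|u|^2,|v|^2\leq k/4$. Hence $A+B\leq k$, which inserted into the previous display yields
\[
\frac{|u+v|}{2}+\frac{|u-v|}{4} \;\leq\; \frac{\sqrt5}{4}\sqrt{k},
\]
proving the inequality.

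For sharpness, I would identify precisely when both steps are tight. Cauchy--Schwarz is tight when $\sqrt A/\sqrt B=\alpha/\beta=2$, i.e.\ $|u+v|^2=4|u-v|^2$; the cube bound is tight when $u,v\in\{-1/2,1/2\}^k$. For two such cube vertices differing in exactly $d$ coordinates we have $|u-v|^2=d$ and $|u+v|^2=k-d$, and the ratio condition becomes $k-d=4d$, i.e.\ $d=k/5$. Thus whenever $5\mid k$, taking any pair of cube vertices at Hamming distance $k/5$ attains equality, so the constant $\sqrt5/4$ cannot be improved. (For general $k$ one can still approach this ratio by choosing $d=\lfloor k/5\rfloor$, which suffices to show the coefficient is optimal asymptotically.)

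I do not foresee a real obstacle: the only nontrivial insight is recognizing that the weights $(1/2,1/4)$ dictated by the statement are exactly the ones for which the Cauchy--Schwarz constant coincides with $\sqrt5/4$, so the inequality is genuinely an equality case of weighted Cauchy--Schwarz combined with the parallelogram identity. The rest is bookkeeping.
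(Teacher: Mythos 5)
Your proof is correct, and the overall skeleton matches the paper's: both arguments hinge on the parallelogram identity $|u+v|^2 + |u-v|^2 = 2(|u|^2+|v|^2) \leq k$ and then optimize the linear combination $\tfrac12|u+v| + \tfrac14|u-v|$ subject to that constraint. Where you diverge is the optimization step. The paper writes $|u-v| = \lambda\sqrt{|u|^2+|v|^2}$, derives $|u+v| = \sqrt{(2-\lambda^2)(|u|^2+|v|^2)}$, and maximizes the one-variable function $f(\lambda) = \lambda + 2\sqrt{2-\lambda^2}$ by calculus to find the maximum $\sqrt{10}$ at $\lambda = \sqrt{2/5}$. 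You instead recognize the bound as a weighted Cauchy--Schwarz inequality in the two scalars $(\sqrt{A},\sqrt{B})$ against $(1/2,1/4)$, which gives the constant $\sqrt{(1/2)^2+(1/4)^2}=\sqrt5/4$ in a single line, with no parametrization or derivative needed. The Cauchy--Schwarz route is cleaner and also makes the equality analysis more transparent: tightness forces $|u+v|^2 = 4|u-v|^2$, which for cube vertices becomes Hamming distance $d = k/5$; this recovers exactly the paper's explicit tight example (which takes $u,v\in\{-1/2,1/2\}^k$ differing in $k/5$ coordinates) but derives it from first principles rather than verifying a prescribed pair. In short: same structure, but your Cauchy--Schwarz step is a genuine simplification of the paper's calculus argument.
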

\begin{proof}
To start with, note that
\begin{align*}
|u|^2 \leq \sum_1 ^k \frac14 = \frac{k}{4}, \quad \quad
|v|^2 \leq \sum_1 ^k \frac14 = \frac{k}{4} \quad  \quad \text{and} \quad \quad
|u -v| \leq \sqrt{k}. 
\end{align*}
The first two relations immediately yield
\begin{equation} \label{eq:sq}
|u|^2 + |v|^2 \leq \frac{k}{2}. 
\end{equation}
Recall the identities
\begin{align} \label{eq:start}
|u+v|^2 = |u|^2 + |v|^2 + 2 uv, \quad  \quad |u-v|^2 &= |u|^2 + |v|^2 - 2 uv. 
\end{align}
Here $uv$ is the dot product of $u$ and $v$. We deduce that 
\[ |u-v|^2 = 2 (|u|^2+|v|^2) - |u+v|^2 \leq 2 (|u|^2+|v|^2). \]
We can thus write 
$|u-v| = \lambda \sqrt{|u|^2+|v|^2}$, where $0 \leq \lambda \leq \sqrt2$, whence
\[ |u-v|^2 = \lambda^2 (|u|^2+|v|^2). \]
From the two equations in~\eqref{eq:start} we also obtain
\begin{align*} 
|u+v| =  \sqrt{2(|u|^2 + |v|^2)  - |u-v|^2} = \sqrt{(2 - \lambda^2) (|u|^2 + |v|^2)}. 
\end{align*}
Substituting the expressions of $|u+v|$ and $|u-v|$ and using~\eqref{eq:sq} yields
\begin{align*}
  \frac{|u+v|}{2} + \frac{|u-v|}{4} &=
  \frac {\sqrt{(2 - \lambda^2) (|u|^2 + |v|^2)}}{2} + \frac{\lambda \sqrt{|u|^2+|v|^2}}{4} =
  \left( \frac{ \sqrt{2 - \lambda^2}}{2} + \frac{\lambda}{4} \right) \sqrt{|u|^2+|v|^2} \\
 &\leq \frac14 \left( \lambda + 2 \sqrt{2 -\lambda^2} \right) \sqrt{ \frac{k}{2}}. 
\end{align*}

A standard calculation shows that the function $f(\lambda) = \lambda + 2 \sqrt{2 -\lambda^2}$,
where $0 \leq \lambda \leq \sqrt2$,
attains its maximum, $\sqrt{10}$, at $\lambda= \sqrt{\frac25}$. Consequently,
\begin{align*}
  \frac{|u+v|}{2} + \frac{|u-v|}{4} &\leq
  \frac14 \sqrt{10} \sqrt{\frac{k}{2}} = \frac{\sqrt{5}}{4} \sqrt{k}.
\end{align*}
This concludes the proof of the upper bound.

\medskip
For a tight example, assume that $k$ is a multiple of $5$ and let $u=u_1,\ldots,u_k$, and
$v=v_1,\ldots,v_k$, where
\begin{align*}
 u_i &=
\left\{ \begin{array}{ll}
  +\frac12, & \text{ for } i=1,\ldots,\frac{4k}{5}, \\
  -\frac12, & \text{ for } i=\frac{4k}{5}+1,\ldots,k.
\end{array} \right.\\
v_i &=  +\frac12, \ \ \ \ \text{ for } i=1,\ldots,k.
\end{align*}

It is now easily verified that
\begin{align*}
\frac{|u+v|}{2} = \sqrt{\frac{4k}{5} \cdot \frac14}, \quad  \quad 
\frac{|u-v|}{4}  = \sqrt{\frac{k}{5} \cdot \frac{1}{16}}, \quad  \quad \text{and} \quad  \quad 
\frac{|u+v|}{2} + \frac{|u-v|}{4} &= \frac54 \cdot \sqrt{\frac{k}{5}} = \frac{\sqrt{5}}{4} \sqrt{k},
\end{align*}
as required. 
\end{proof}

\paragraph{Final argument in the proof of Theorem~\ref{thm:s}.}
Let $u,v \in U$ such that $e=uv$ is an edge of the MST $T$. By the triangle inequality, the distance from
the center of the cube to any point in the ball $B_{e}$ is at most $\frac12 |u+v| + \frac14 |u-v|$.
By Lemma~\ref{lem:0.56} this distance is at most $\frac{\sqrt5}{4} \sqrt{k}$, thus
$\cup_{e\in T} B_e  \subset B$, where $B$ is the ball of radius
$ \frac{\sqrt5}{4} \sqrt{k} =0.559 \ldots \cdot \sqrt{k}$ centered at $o$.

The ball packing argument in~\cite{BM92} yields
$S_k(T) \leq (3 \sqrt{k})^k$.
Using Lemma~\ref{lem:0.56} instead improves this bound to
$S_k(T) \leq (\sqrt{5k})^k$. By Lemma~\ref{lem:bm92b} we obtain a Hamiltonian cycle $H$ through $P$ satisfying

\begin{equation} \label{eq:new}
S_k(H) \leq \frac{2}{3}\cdot3^k \cdot S_k(T) \leq \frac{2}{3} \cdot3^k \cdot (5k)^{k/2}.
\end{equation}
Taking the $k$-th root completes the proof of item (ii). Note that the only change in the calculation is
replacing a multiplicative factor of $3$ by $\sqrt{5}$ (in~Inequality~(2) from~\cite{BM92}).
The improvement carries on proportionally and is reflected in the final bound. 

Recall that the traversal of the MST $T$ using the algorithm of Sekanina~\cite{Sek60,Sek71} takes linear time. As such,
the running time for computing the TSP tour is determined by the time to compute $T$. This proves
item (iii) and completes the proof of Theorem~\ref{thm:s}. 
\qed

\smallskip
An alternative way to verify the upper bound in~\eqref{eq:new} is by using Theorem~\ref{thm:flln11}.
The details are left to the reader.

\section{Hamiltonian cycles for subsets of cube vertices}

In this section we consider our problem (the study of extremal values for Hamiltonian cycles and paths in $[0,1]^k$)
when the input is restricted to subsets of cube vertices. Note that this restriction is quite natural, since all known best
constructions are attained or matched by such subsets. We will use some results on binary codes.

\subsection{Preparation: binary codes}
First we prove an optimization result which will be used multiple times throughout this  paper.

\begin{lemma}
  \label{lem: opt1}
Let $q_1,q_2,\ldots, q_m\in [0,1]$. Then,  
\begin{align*}
  \sum_{i<j} |q_i-q_j|^2 \leq
  \left \lfloor \frac{m}{2} \right \rfloor  \cdot  \left \lceil \frac{m}{2} \right \rceil. 
  \end{align*}
\end{lemma}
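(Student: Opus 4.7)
The plan is to reduce the continuous optimization problem to a discrete one over $\{0,1\}^m$ by exploiting convexity, and then to solve the resulting discrete problem directly.

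First, I would expand the sum via the standard identity
\[
\sum_{i<j} (q_i - q_j)^2 = m \sum_{i=1}^m q_i^2 - \Bigl(\sum_{i=1}^m q_i\Bigr)^2,
\]
which follows from $\sum_{i,j}(q_i-q_j)^2 = 2m\sum_i q_i^2 - 2(\sum_i q_i)^2$ and the fact that the diagonal terms vanish. Let $f(q_1,\ldots,q_m)$ denote the left-hand side.

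Next, I would fix all coordinates except $q_i$ and compute $\partial^2 f/\partial q_i^2 = 2(m-1) \geq 0$, so $f$ is convex in each coordinate separately. A convex function on the interval $[0,1]$ attains its maximum at an endpoint; applying this coordinate by coordinate shows that the maximum of $f$ on $[0,1]^m$ is attained at some vertex of the hypercube, i.e., at a point in $\{0,1\}^m$.

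Finally, at such a vertex, if exactly $k$ of the $q_i$ equal $1$ and the remaining $m-k$ equal $0$, then $f$ evaluates to $k(m-k)$. Maximizing the integer-valued quadratic $k(m-k)$ over $k \in \{0,1,\ldots,m\}$ yields the optimum at $k = \lfloor m/2 \rfloor$, giving the desired bound $\lfloor m/2 \rfloor \cdot \lceil m/2 \rceil$. There is no real obstacle here; the only point worth double-checking is the direction of convexity (the function is convex, not concave, in each coordinate), which ensures that the extrema live on the $0/1$ vertices rather than in the interior.
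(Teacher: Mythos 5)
Your proof is correct and takes a genuinely different route from the paper. The paper argues by induction on $m$: after normalizing so that $q_1=0$ and $q_m=1$ (which can only increase the sum), it peels off the two extreme points, uses the elementary bound $q_j^2+(1-q_j)^2\leq 1$ for each middle point, and invokes the inductive hypothesis on the remaining $m-2$ points. You instead rewrite $\sum_{i<j}(q_i-q_j)^2 = m\sum_i q_i^2 - (\sum_i q_i)^2$, observe that this is convex in each coordinate separately (with $\partial^2 f/\partial q_i^2 = 2(m-1)\geq 0$), conclude that the maximum over the box $[0,1]^m$ is attained at a vertex of $\{0,1\}^m$, and then maximize $k(m-k)$ over integers $k$. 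Both arguments are short and elementary; the paper's induction is self-contained and avoids any appeal to convexity, while your argument is more structural, immediately identifies the extremal configurations (balanced $0/1$ vectors), and would generalize cleanly to other convex-per-coordinate objectives. Either proof is acceptable.
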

\begin{proof}
  We prove this result by induction on $m$. The statement holds trivially for $m=1$ and $m=2$.
  Let $q_1,q_2,\ldots, q_m\in [0,1]$ for some $m\geq 3$. We can assume $0=q_1\leq q_2\leq \ldots \leq q_m=1$. 
By the induction assumption, 
\begin{align*}
  \sum_{1<i<j<m} |q_i-q_j|^2 \leq
  \left \lfloor \frac{m-2}{2} \right \rfloor  \cdot  \left \lceil \frac{m-2}{2} \right \rceil. 
  \end{align*}
Observe that the maximum of the quadratic function $f(x) = x^2 + (1-x)^2$ over the interval $[0,1]$ is obtained
at $x=0$ or $x=1$. Thus, $|q_1-q_j|^2+|q_m-q_j|^2=q_j^2+(1-q_j)^2\leq 1$ for $j\in\{2,\ldots,m-1\}$. Therefore,
\begin{align*}
  \sum_{i<j} |q_i-q_j|^2&= |q_1-q_m|^2+\sum_{1<j<m} (|q_1-q_j|^2+|q_m-q_j|^2) + \sum_{1<i<j<m} |q_i-q_j|^2 \\
  &\leq 1+(m-2)+  \left \lfloor \frac{m-2}{2} \right \rfloor  \cdot  \left \lceil \frac{m-2}{2} \right \rceil=
  \left \lfloor \frac{m}{2} \right \rfloor  \cdot  \left \lceil \frac{m}{2} \right \rceil,
  \end{align*}
completing the proof of this lemma.  
\end{proof}

\begin{lemma} \label{lem:3incube}
  Let $\delta,\gamma> 0$, and $k_1,k_2$ be non-negative integers.
  Let $X\subseteq [0,\delta]^{k_1} \times [0,\gamma]^{k_2}$ be a finite set of size $|X|\geq m\geq 2$.
  Then there exists two distinct points $p,q\in X$ such that  
\[ |p-q|^2\leq \frac{\lfloor \frac{m}{2} \rfloor \lceil \frac{m}{2} \rceil}{\binom{m}{2}} \, (\delta^2k_1+\gamma^2k_2). \]
\end{lemma}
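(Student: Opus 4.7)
The plan is to reduce the $k$-dimensional bound to the one-dimensional Lemma~\ref{lem: opt1} by exchanging the order of summation, and then use an averaging argument.

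First I would select any $m$ distinct points $x^{(1)}, \ldots, x^{(m)} \in X$, which exist since $|X|\geq m$. I would consider the sum of all pairwise squared Euclidean distances:
\begin{align*}
\Sigma := \sum_{1 \leq i < j \leq m} |x^{(i)} - x^{(j)}|^2 = \sum_{\ell=1}^{k_1+k_2} \sum_{1 \leq i < j \leq m} (x^{(i)}_\ell - x^{(j)}_\ell)^2,
\end{align*}
where the inner sum for each fixed coordinate $\ell$ depends only on the one-dimensional projections.

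Next I would bound the inner sum coordinate-by-coordinate. For a coordinate $\ell$ corresponding to a factor $[0,\delta]$, rescale by $\delta$ so that the values $x^{(i)}_\ell/\delta$ lie in $[0,1]$; Lemma~\ref{lem: opt1} then yields
\begin{align*}
\sum_{1\leq i<j\leq m} (x^{(i)}_\ell - x^{(j)}_\ell)^2 \leq \delta^2 \left\lfloor \tfrac{m}{2} \right\rfloor \left\lceil \tfrac{m}{2} \right\rceil,
\end{align*}
and analogously with $\gamma$ in place of $\delta$ for the remaining $k_2$ coordinates. Summing over all $k_1+k_2$ coordinates gives
\begin{align*}
\Sigma \leq \left\lfloor \tfrac{m}{2} \right\rfloor \left\lceil \tfrac{m}{2} \right\rceil (\delta^2 k_1 + \gamma^2 k_2).
\end{align*}

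Finally, since $\Sigma$ is a sum over $\binom{m}{2}$ pairs, at least one pair $(p,q)$ must have squared distance no larger than the average, which is exactly the claimed bound. There is no real obstacle here; the only minor technical point is the rescaling of a coordinate from $[0,\delta]$ to $[0,1]$ before invoking Lemma~\ref{lem: opt1}, which is just homogeneity of squared distance.
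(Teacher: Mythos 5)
Your proposal is correct and follows essentially the same route as the paper: fix $m$ points, bound the sum of all pairwise squared distances coordinate-by-coordinate via Lemma~\ref{lem: opt1} with rescaling, and then average over $\binom{m}{2}$ pairs.
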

\begin{proof}
  Let $p_1,p_2,\ldots,p_m$ be any $m$ points from $X$. Given integers $i$ and $j$, we denote by ${p_i}_j$ the $j$-th coordinate of $p_i$. By applying Lemma~\ref{lem: opt1} and scaling we obtain
  \begin{align}
  \label{opt1}
  \sum_{i<i'} |p_{{i}_j} -p_{{i'}_j}|^2 &\leq
  \left \lfloor \frac{m}{2} \right \rfloor  \cdot  \left \lceil \frac{m}{2} \right \rceil \cdot \delta^2 \quad \quad
  \text{for every $j\in [k_1]$, and}\\
    \label{opt2}
  \sum_{i<i'} |p_{{i}_j} -p_{{i'}_j}|^2 &\leq
  \left \lfloor \frac{m}{2} \right \rfloor  \cdot  \left \lceil \frac{m}{2} \right \rceil \cdot \gamma^2 \quad \quad
  \text{for every $j\in [k_1+k_2]\setminus[k_1]$.}
    \end{align}
By summing up the inequalities \eqref{opt1} and \eqref{opt2}, we obtain
  \begin{align*}
    \sum_{i<i'} |p_{{i}} -p_{{i'}}|^2 \leq \left
    \lfloor \frac{m}{2} \right \rfloor  \cdot  \left \lceil \frac{m}{2} \right \rceil \cdot (\delta^2k_1+\gamma^2k_2). 
  \end{align*}
Thus, by averaging over all pairs of points, the minimizing pair satisfies the claimed inequality.
\end{proof}

Applying Lemma~\ref{lem:3incube} with $\delta=\gamma=1$, $k_1=k$ and $k_2=0$, immediately yields
the following symmetric version.

\begin{lemma} \label{lem:sym}
  Let $X\subseteq [0,1]^{k}$ of size $|X|\geq m \geq 3$.
Then there exist two distinct points $p,q \in X$ such that
  \[ |p-q|^2 \leq \frac{\lfloor \frac{m}{2} \rfloor  \cdot  \lceil \frac{m}{2} \rceil}{{m \choose 2}} \cdot k. \]
\end{lemma}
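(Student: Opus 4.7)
The plan is to observe that this symmetric version is an immediate specialization of Lemma~\ref{lem:3incube}, which has already been established. No new work is required; the only task is to verify that the substitution of parameters produces exactly the claimed bound.

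Concretely, I would apply Lemma~\ref{lem:3incube} with $\delta=\gamma=1$, $k_1=k$, and $k_2=0$. Under this choice the Cartesian product $[0,\delta]^{k_1}\times[0,\gamma]^{k_2}$ collapses to $[0,1]^k\times[0,1]^0 = [0,1]^k$, so the hypothesis $X\subseteq[0,1]^k$ with $|X|\ge m\ge 3$ matches the hypothesis of Lemma~\ref{lem:3incube} (the $m\ge 3$ versus $m\ge 2$ distinction is harmless, it only weakens the hypothesis). The conclusion of Lemma~\ref{lem:3incube} then guarantees two distinct points $p,q\in X$ with
\[
|p-q|^2 \;\le\; \frac{\lfloor m/2\rfloor\,\lceil m/2\rceil}{\binom{m}{2}}\,(\delta^2 k_1+\gamma^2 k_2)
\;=\; \frac{\lfloor m/2\rfloor\,\lceil m/2\rceil}{\binom{m}{2}}\cdot k,
\]
which is exactly the desired inequality.

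There is essentially no obstacle here: the result is recorded separately only because the symmetric, single-cube version is the form that will be invoked later in the paper, and the proof amounts to a single parameter substitution into the more general statement already proved via Lemma~\ref{lem: opt1}.
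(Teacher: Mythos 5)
Your proof is correct and matches the paper exactly: the paper itself introduces Lemma~\ref{lem:sym} with the remark that it follows from Lemma~\ref{lem:3incube} by taking $\delta=\gamma=1$, $k_1=k$, $k_2=0$, which is precisely your substitution.
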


Let $A(k,d)$ denote the maximum cardinality of a binary code of length $k$ with minimum distance~$d$.
We recall the following fact~\cite{vL95}:

\begin{lemma} {\rm (Singleton bound).} \label{lem:singleton}
$A(k,d) \leq 2^{k-d+1}$.
\end{lemma}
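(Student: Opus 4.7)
The plan is to use the standard projection (or puncturing) argument from coding theory. Let $C \subseteq \{0,1\}^k$ be a binary code with minimum distance $d$ and $|C| = A(k,d)$. I would consider the map $\pi : C \to \{0,1\}^{k-d+1}$ that deletes the last $d-1$ coordinates of each codeword.

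The key step is to show that $\pi$ is injective. Suppose for contradiction that two distinct codewords $u, v \in C$ satisfy $\pi(u) = \pi(v)$. Then $u$ and $v$ agree on the first $k-d+1$ coordinates, so they can differ only in the last $d-1$ coordinates. Hence the Hamming distance between $u$ and $v$ is at most $d-1$, contradicting the assumption that the minimum distance of $C$ is $d$. Therefore $\pi$ is injective, which gives
\[ A(k,d) = |C| = |\pi(C)| \leq |\{0,1\}^{k-d+1}| = 2^{k-d+1}. \]

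There is no real obstacle here — the argument is a one-line pigeonhole once the correct projection is identified. The only thing to double-check is the edge case when $d > k$, in which case the bound is vacuous or trivial (a code with minimum distance exceeding its length can contain at most one codeword), consistent with $2^{k-d+1} \geq 1$ when $d \leq k+1$.
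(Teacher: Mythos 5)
The paper does not prove this lemma; it simply cites it as a known fact from van Lint's handbook chapter. Your projection (puncturing) argument is the standard textbook proof of the Singleton bound and is correct, including the observation about the trivial case $d > k$.
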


We need the following improvement.

\begin{lemma}  \label{lem:singleton2}
If $d<\frac{2}{3}k$, then
$A(k,d) \leq  2^{k-\frac{3}{2}d+2}$.
\end{lemma}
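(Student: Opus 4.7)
The plan is to combine a standard shortening operation on binary codes with the geometric three-point inequality supplied by Lemma~\ref{lem:sym}. I will identify $\{0,1\}^{k'}\subset [0,1]^{k'}$ and use the fact that for binary vectors the squared Euclidean distance coincides with the Hamming distance.

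First I would set $k' := \lceil 3d/2\rceil - 1$ and $r := k - k'$. The hypothesis $d < \frac{2}{3}k$ forces $\lceil 3d/2\rceil \le k$, so $r \ge 1$. Given any binary code $C\subseteq\{0,1\}^k$ of minimum distance $d$ with $|C|=A(k,d)$, I partition $C$ into $2^r$ classes according to the values taken in an arbitrary block of $r$ coordinates; one class contains at least $|C|/2^r$ codewords. Deleting the $r$ fixed coordinates from this class preserves all pairwise Hamming distances and yields a binary code of length $k'$ with minimum distance at least $d$, so $A(k,d)\le 2^r\cdot A(k',d)$.

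Second, I would argue $A(k',d)\le 2$. Suppose for contradiction some binary code in $\{0,1\}^{k'}$ contains three codewords $p_1,p_2,p_3$ with pairwise Hamming distance at least $d$. Applying Lemma~\ref{lem:sym} with $m=3$ to $\{p_1,p_2,p_3\}\subset [0,1]^{k'}$ produces two of them at squared Euclidean distance at most
\[
\frac{\lfloor 3/2\rfloor \lceil 3/2\rceil}{\binom{3}{2}}\cdot k' \;=\; \frac{2k'}{3}.
\]
However, the choice of $k'$ gives $k'\le 3d/2 - 1/2$ (with equality when $d$ is odd), hence $2k'/3\le d - 1/3 < d$, contradicting the assumed Hamming-distance lower bound. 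Thus $A(k',d)\le 2$.

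Combining the two steps yields
\[
A(k,d) \;\le\; 2^{r+1} \;=\; 2^{\,k-\lceil 3d/2\rceil+2} \;\le\; 2^{\,k-3d/2+2},
\]
which is the desired inequality. The bound is tight for even $d$ (recovering $A(k,2)=2^{k-1}$, $A(7,4)=8$, and so on) and is loose by a factor of $\sqrt{2}$ for odd $d$. The main conceptual point to verify is that the threshold $k'=\lceil 3d/2\rceil-1$ is precisely the largest $k'$ for which Lemma~\ref{lem:sym} rules out three pairwise $d$-separated binary codewords; this sharp choice is what produces the constant $3/2$ in the exponent of the final bound, and the remainder is straightforward bookkeeping across the two parities of $d$.
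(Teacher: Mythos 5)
Your proof is correct and follows essentially the same route as the paper: both use a pigeonhole/shortening step to fix $k-k'$ coordinates and then apply Lemma~\ref{lem:sym} with $m=3$ to the remaining $k'=\lceil 3d/2\rceil-1$ coordinates to rule out three pairwise $d$-separated codewords. The only difference is presentational---you factor the argument cleanly as $A(k,d)\le 2^{r}A(k',d)$ and $A(k',d)\le 2$, whereas the paper runs the same pigeonhole directly as a contradiction.
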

\begin{proof}
Towards contradiction, assume that there exists $X\subseteq\{0,1\}^k$ of
size $|X|>2\cdot 2^{k-\frac{3}{2}d+1}$ such that $|p-q|^2\geq d$ for
every $p,q\in X$. By the pigeonhole principle, there exists $p,q,r \in X$
which coincide on the first $\lfloor k-\frac{3}{2}d+1 \rfloor$ coordinates.
By Lemma~\ref{lem:sym}, applied with $m=3$ to the last $\lceil \frac{3}{2}d\rceil-1$
coordinates, we get that
\begin{align*}
\min\{|p-q|^2,|p-r|^2,|r-q|^2\} \leq \frac{2}{3} \left(\left\lceil \frac{3}{2}d\right\rceil-1\right) < d,  
\end{align*}
a contradiction.
\end{proof}

\subsection{Building a path greedily}   \label{pathgreedy}

In the proofs of some of our results we will analyze a greedy algorithm which takes a discrete 
point set $X\subseteq [0,1]^k$ of size $|X|=n$ as an input and creates a Hamiltonian path $F$ through $X$.
It   processes the point pairs in nondecreasing order of distance and maintains a collection of paths.

\medskip
\noindent \textbf{Algorithm 1:} Initially, set 
$F_0$ to be the empty graph on $X$. For $i\in [n-1]$, let $e_i$ be an edge of smallest weight
among all edges $e\not\in F_{i-1}$ which satisfy that $F_{i-1}+e$ is a vertex-disjoint union of paths.
Set $F_{i}:=F_{i-1}+e_i$. Then, $F:=F_{n-1}$ is a Hamiltonian path.

\begin{lemma}
\label{bincode}
Let $j\in [k]$. The number of edges $e\in F$ satisfying $|e|^2\geq j$ is less than $A(k,j)$.
\end{lemma}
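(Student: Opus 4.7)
The plan is to identify the $m$ edges of $F$ with $|e|^2\geq j$ with the last $m$ edges produced by the greedy algorithm, and then to extract from the intermediate graph a binary code of length $k$ and minimum distance $j$ whose size is $m+1$; the bound $m<A(k,j)$ then follows immediately from the definition of $A(k,j)$.

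First I would verify that Algorithm~1 produces its edges in nondecreasing weight order: $|e_1|\leq|e_2|\leq\cdots\leq|e_{n-1}|$. Adding an edge can only \emph{destroy} addability of others (it either raises a vertex degree to $2$ or merges two components), so every edge addable at step $i+1$ was already addable at step $i$; since $e_i$ is the lightest addable edge at step $i$, the claim follows. Setting $m:=|\{e\in F:|e|^2\geq j\}|$, the heavy edges are therefore precisely $e_{n-m},e_{n-m+1},\ldots,e_{n-1}$. The key object to study is the snapshot $F_{n-m-1}$ just before the first heavy edge is inserted; it has $n-m-1$ edges on $n$ vertices and is a vertex-disjoint union of paths, hence exactly $m+1$ path components (some possibly single vertices).

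The geometric heart of the argument is the following observation: for any two vertices $u,v$ that are endpoints of two \emph{different} path components of $F_{n-m-1}$, the edge $uv$ is addable to $F_{n-m-1}$ -- both endpoints have degree at most $1$ and the two components are distinct, so no cycle is created. Since $e_{n-m}$ was chosen as the lightest addable edge, $|u-v|\geq|e_{n-m}|\geq\sqrt{j}$, and hence $|u-v|^2\geq j$.

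Finally, I would pick one endpoint from each of the $m+1$ components to obtain a set $S\subseteq X\subseteq\{0,1\}^k$ with $|S|=m+1$. By the previous observation, every pair in $S$ has squared Euclidean distance at least $j$, which for binary vectors is exactly the Hamming distance. Thus $S$ is a binary code of length $k$ with minimum distance at least $j$, so $m+1\leq A(k,j)$, i.e.\ $m<A(k,j)$, as required. The only minor subtlety I anticipate is the treatment of singleton components, whose unique vertex has degree $0$; but the addability argument above applies verbatim in that case, so nothing extra is needed.
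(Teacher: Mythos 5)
Your proof is correct and follows essentially the same approach as the paper's: show that Algorithm~1 adds edges in nondecreasing weight order, then extract a binary code of minimum distance $j$ by taking one endpoint of each path component in the snapshot just before the first heavy edge. The one noteworthy difference is the choice of snapshot: the paper works with $F_\ell$ (the graph \emph{after} the first heavy edge $e_\ell$ is added), which has $n-\ell=m$ components and so only yields $m\leq A(k,j)$, whereas your $F_{n-m-1}=F_{\ell-1}$ has $m+1$ components and directly gives the strict inequality $m<A(k,j)$ asserted by the lemma. That strictness is in fact exploited later in the paper (to conclude there are at most $3$ long and at most $1$ very long edge in $F$), so your bookkeeping is the more careful version of the same argument.
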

\begin{proof}
  Let $\ell$ be the smallest integer such  that $|e_\ell|^2\geq j$. The number of edges $e\in F$ satisfying $|e|^2\geq j$
  is less than the number of components in $F_{\ell}$, which is $n-\ell$. Let $P_\ell\subseteq X$ be a set containing one endpoint
  of each path in $F_{\ell}$. The set $P_\ell$ is a binary code of length $k$ with  minimum distance $j$. Thus, the number
  of edges $e\in F$ satisfying $|e|^2\geq j$ is less than $A(k,j)$.
\end{proof}

\paragraph{Proof of Theorem~\ref{thm:vertices}.}
If $|X|=2$, the statement holds trivially. Assume $n:=|X|\geq 3$. Let $F$ be the Hamiltonian path created by Algorithm 1.
We partition the edges $e\in F$ into four classes.
 
  \begin{enumerate} \itemsep 1pt
    \item \emph{short} edges: $|e|^2 \leq \frac{k}{5}$.
    \item \emph{medium} edges:  $\frac{k}{5} < |e|^2 \leq \frac{3k}{5}$.
  \item \emph{long} edges: $\frac{3k}{5} < |e|^2 \leq
    \frac{2k}{3} $.
  \item \emph{very long} edges:  $\frac{2k}{3} <|e|^2 $.
  \end{enumerate}
  Denote by $F^s,F^m,F^l,f^{vl}$ the subgraphs of $F$ containing all short, medium, long and very long edges,
  respectively.  They partition $F$ and thus  $S_k(F)= S_k(F^s)+S_k(F^m)+S_k(F^l)+S_k(F^{vl})$.
  We will provide upper bounds for the four contributions separately.  

  Since $n\leq 2^k$, the number of short edges is trivially at most $2^k$.
  Thus, 
  $$S_k(F^s)\leq 2^k \left(\sqrt{\frac{k}{5}}\right)^k.$$
Now, we estimate $S_k(F^m)$. Let $j$ be an integer satisfying $\frac{k}{5} < j \leq \frac{3k}{5}$.
The number of edges $e\in F$ satisfying $|e|^2\geq j$ is less than $ A(k,j)\leq 2^{k-\frac{3}{2}j+2}$
by Lemmas~\ref{lem:singleton2} and ~\ref{bincode}. Therefore,
  \[ S_k(F^m)\leq \sum_{j=\left\lceil\frac{k}{5}\right\rceil}^{\left\lfloor\frac{3k}{5}\right\rfloor} 2^{k-\frac{3}{2}j+2} \left(\sqrt{j}\right)^k
  \leq 4\cdot \left(\left\lfloor\frac{3k}{5}\right\rfloor-\left\lceil\frac{k}{5}\right\rceil\right)
  \cdot \left(0.842\sqrt{k}\right)^k
  \leq \frac{8k}{5}  \cdot \left(0.842\sqrt{k}\right)^k. \]
  Here we used that the function $f(x)=2^{1-3x/2} \sqrt{x}$, where $x\geq 0$,
  is maximized for $x=\frac{1}{\log(8)}$ and thus $2^{1-3x/2} \sqrt{x}\leq 0.842$. 

  Next, we estimate $S_k(F^l)$. The number of edges $e\in F$ satisfying $|e|^2>\frac{3k}{5}$
  is less than \quad $A(k,\lfloor \frac{3k}{5}\rfloor+1)\leq  4$ by Lemma~\ref{lem:sym},
    applied with $m=5$ and by Lemma~\ref{bincode}. Therefore,  
      \[S_k(F^l)\leq 3 \cdot \left(\sqrt{\frac{2}{3}k}\right)^k. \]

  Last, we estimate $S_k(F^{vl})$. The number of edges $e\in F$ satisfying $|e|^2>\frac{2k}{3}$
  is less than \quad $A(k,\lfloor \frac{2k}{3}\rfloor+1)\leq  2$ by Lemma~\ref{lem:sym},
applied with $m=3$ and Lemma~\ref{bincode}. Thus, there is at most one very long edge $e$ in $F$.
This very long edge has length at most $|e| \leq \sqrt{k-1}$ by the following argument.
Consider the last step of the greedy algorithm, when the last two paths, call them $P_1$ and $P_2$, are
 being joined. Since $|X|\geq 3$, one of them, say $P_1$, contains at least two
 vertices. An endpoint of the path $P_2$ has distance at most
 $\sqrt{k-1}$ to one of the endpoints of $P_1$, since not both
 endpoints can be opposite on the cube.
 Thus, $|e| \leq \sqrt{k-1}$.  We get $S_k(F^{vl})\leq \sqrt{k-1}^k$.
 
  Adding up the four contributions to $S_k(F)$ yields
  \begin{align}
  \label{ineq:4con}
  \nonumber
    S_k(F) &= S_k(F^s)+S_k(F^m)+S_k(F^l)+S_k(F^{vl})\\
     \nonumber
    &\leq 2^k \left( \sqrt{\frac{k}{5}} \right)^k
    + \frac{8k}{5}  \cdot \left(0.842\sqrt{k}\right)^k
    +3 \cdot \left(\sqrt{\frac{2}{3}k}\right)^k+\left(\sqrt{k-1}\right)^k\\
    &= \left(\sqrt{k}\right)^k\left(\left(\frac{2}{\sqrt{5}}\right)^k+\frac{8k}{5}\cdot
    0.842^k +3 \cdot \left(\sqrt{\frac{2}{3}}\right)^k +
    \left(\sqrt{\frac{k-1}{k}}\right)^k \right) \nonumber \\ 
    &< \left(\sqrt{k}\right)^k,
  \end{align}
  where the last inequality holds for $k$ sufficiently large. We used the fact that
  $\left(\sqrt{\frac{k-1}{k}}\right)^k$ converges to $e^{-1/2}$.
  Let $H$ be the Hamiltonian cycle obtained from $F$ by connecting the two endpoints.
  Then
 \[ \pushQED{\qed} 
  S_k(H) \leq S_k(F)+ \left(\sqrt{k}\right)^k \leq 2\left(\sqrt{k}\right)^k. \qedhere \popQED \]

We remark that the proof of Theorem~\ref{thm:vertices} works for $k_0=29$. The last inequality in \eqref{ineq:4con} is strict.
Thus, Theorem~\ref{thm:vertices} is tight only for $|X|=2$.

\section{Hamiltonian cycles: asymptotic upper bounds} \label{sec:asymptotic}

In this section we prove Theorems~\ref{thm:asymp-st}, \ref{thm:asymp-hc+} and \ref{thm:fixedn}.

\subsection{Preparation}

\begin{lemma} \label{lem:hypercube1}
  Let $0<\alpha < 1$ and $Y\subseteq [0,1]^k$ such that  $|u-v|> \alpha \sqrt{k}$ for every two distinct points $u,v\in Y$.
  Let $m\in  \NN$. Then,
$$|Y|\leq 2m \cdot \left\lceil  \sqrt{\frac{1}{2}\left(1+\frac{1}{2m-1}\right)}\alpha^{-1}\right\rceil^k.$$
\end{lemma}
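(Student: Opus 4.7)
The plan is to prove this by a standard grid packing argument, using Lemma~\ref{lem:3incube} to bound how many points of $Y$ can fall inside any small axis-aligned sub-cube of $[0,1]^k$.

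First, I would observe the arithmetic identity
\[ \frac{1}{2}\!\left(1+\frac{1}{2m-1}\right)=\frac{m}{2m-1}, \]
and set $L:=\left\lceil\alpha^{-1}\sqrt{\tfrac{m}{2m-1}}\right\rceil$, so the bound to be proved is $|Y|\le 2m\,L^k$. I would then partition the unit cube $[0,1]^k$ into $L^k$ axis-aligned sub-cubes of side length $1/L$ in the usual grid fashion, breaking ties on shared faces so that every point of $[0,1]^k$ belongs to exactly one sub-cube.

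Next, I would show that no sub-cube contains $2m$ or more points of $Y$. Suppose for contradiction that some sub-cube $Q$ contains at least $2m$ points of $Y$. After translation, $Q\subseteq[0,1/L]^k$, so I apply Lemma~\ref{lem:3incube} with $\delta=1/L$, $\gamma=0$, $k_1=k$, $k_2=0$, and $m$ replaced by $2m$. This yields two distinct points $p,q\in Y\cap Q$ with
\[ |p-q|^2 \leq \frac{\lfloor 2m/2\rfloor\lceil 2m/2\rceil}{\binom{2m}{2}}\cdot\frac{k}{L^2} = \frac{m}{2m-1}\cdot\frac{k}{L^2}. \]
By the choice of $L$, $L^2\ge\alpha^{-2}\cdot\frac{m}{2m-1}$, so $|p-q|^2\le\alpha^2 k$, i.e.\ $|p-q|\le\alpha\sqrt{k}$. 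This contradicts the hypothesis $|u-v|>\alpha\sqrt{k}$ for all distinct $u,v\in Y$.

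Therefore each of the $L^k$ sub-cubes contains at most $2m-1$ points of $Y$, giving
\[ |Y|\leq (2m-1)L^k \leq 2m\,L^k = 2m\left\lceil\sqrt{\tfrac{1}{2}\!\left(1+\tfrac{1}{2m-1}\right)}\,\alpha^{-1}\right\rceil^k, \]
as desired. There is no real obstacle here; the only mild technicality is the boundary assignment of points in the grid partition, which is easily handled by declaring each sub-cube half-open. The whole argument is essentially a one-line consequence of Lemma~\ref{lem:3incube} once the parameters are matched correctly.
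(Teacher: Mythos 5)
Your proof is correct and follows essentially the same route as the paper's: partition $[0,1]^k$ into a grid of $\lceil\alpha^{-1}\sqrt{m/(2m-1)}\rceil^k$ sub-cubes, apply Lemma~\ref{lem:3incube} with the lemma's $m$ set to $2m$ to cap each sub-cube at $2m-1$ points, and multiply. The paper phrases this as a proof by contradiction on $|Y|$ and writes $\gamma=\delta=\beta^{-1}$ rather than $\gamma=0$ (since $k_2=0$ the value of $\gamma$ is immaterial), but these are cosmetic differences.
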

\begin{proof}
  Let $\beta=\left\lceil  \sqrt{\frac{1}{2}\left(1+\frac{1}{2m-1}\right)}\alpha^{-1}\right\rceil$.
  Assume that $|Y|>2m\cdot \beta^k$.  Partition the unit box $[0,1]^k$ into $\beta^ k$ boxes
  $B_1,B_2,\ldots ,B_{\beta^k}$ as follows: We split up $[0,1]$ into $ \beta $ disjoint consecutive 
 intervals of length $\beta ^{-1}$ each. This gives $\beta^k$ boxes in total.

 Since $|Y|> 2m\cdot \beta^k$, there exists a box $B_j$ such that at least $2m$ points from $Y$ are contained in it.
 By Lemma~\ref{lem:3incube}, applied with $\gamma=\delta=\beta^{-1},  k_1=k$ and $k_2=0$, there exist $p,q\in B_j\cap Y$
 such that $|p-q|^2\leq \frac{1}{2}\left(1+\frac{1}{2m-1}\right)\beta^{-2}k$. We conclude
\begin{align*}
  \alpha^2 k< |p-q|^2\leq \frac{1}{2}\left(1+\frac{1}{2m-1}\right) \beta^{-2}k, \quad \text{implying}
  \quad \alpha< \sqrt{\frac{1}{2}\left(1+\frac{1}{2m-1}\right)}\beta^{-1}.
\end{align*}
However, by the choice of $\beta$, we have
$\alpha< \sqrt{\frac{1}{2}\left(1+\frac{1}{2m-1}\right)}\beta^{-1}\leq \alpha$,
a contradiction. 
\end{proof}

The following lemma is a version of Lemma~\ref{lem:hypercube1} which improves the bound in a certain range of~$\alpha$.

\begin{lemma} \label{lem:hypercube2}
  Let $\sqrt{\frac{100}{1791}}<\alpha < \sqrt{\frac{100}{199}}$ and $Y\subseteq [0,1]^k$
  such that  $|u-v|> \alpha \sqrt{k}$ for every two distinct points $u,v\in Y$. Then,
\begin{align*}
|Y|\leq 600 \cdot 3^{\frac{9}{8}\left(1-\frac{199}{100}\alpha^2\right)k}. 
\end{align*}
\end{lemma}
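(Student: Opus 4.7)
The plan is to refine the partition argument of Lemma~\ref{lem:hypercube1} by splitting only $t$ of the $k$ coordinates into three equal intervals each, while leaving the remaining $k-t$ coordinates as the full interval $[0,1]$. The parameter $t$ will be tuned so that any sub-rectangle containing $m=200$ or more points of $Y$ forces, via Lemma~\ref{lem:3incube}, the existence of a pair of points within distance $\alpha\sqrt{k}$, contradicting the hypothesis on $Y$.

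Concretely, subdivide the first $t$ coordinates into the intervals $[0,1/3]$, $[1/3,2/3]$, $[2/3,1]$, producing $3^t$ axis-aligned sub-rectangles, each a translate of $[0,1/3]^t \times [0,1]^{k-t}$. Suppose for contradiction that $|Y| > 199 \cdot 3^t$; by pigeonhole some sub-rectangle contains at least $m=200$ points of $Y$. Applying Lemma~\ref{lem:3incube} with $\delta=1/3$, $\gamma=1$, $k_1=t$, $k_2=k-t$, $m=200$ to these points yields two of them at squared distance at most
$$
\frac{\lfloor 200/2\rfloor \lceil 200/2\rceil}{\binom{200}{2}}\left(\frac{t}{9}+k-t\right) = \frac{100}{199}\left(k - \frac{8t}{9}\right).
$$
The choice $m=200$ is what introduces the constants $100$ and $199$ in the stated bound: it makes $\lfloor m/2\rfloor \lceil m/2\rceil/\binom{m}{2}$ equal to exactly $\tfrac{100}{199}$.

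To contradict $|p-q|^2 > \alpha^2 k$ one needs $\tfrac{100}{199}\bigl(k-\tfrac{8t}{9}\bigr) < \alpha^2 k$, which rearranges to $t > \tfrac{9}{8}\bigl(1-\tfrac{199}{100}\alpha^2\bigr)k$. The hypothesis $\sqrt{100/1791}<\alpha<\sqrt{100/199}$ is precisely what guarantees $0 < \tfrac{9}{8}\bigl(1-\tfrac{199}{100}\alpha^2\bigr)k < k$ (the right endpoint arithmetic $\tfrac{9}{8}\cdot\tfrac{1592}{1791}=1$ gives the lower endpoint of the $\alpha$-range). Hence one may take $t$ to be the smallest integer strictly exceeding $\tfrac{9}{8}\bigl(1-\tfrac{199}{100}\alpha^2\bigr)k$, which satisfies $1\leq t\leq k$ and $t \leq \tfrac{9}{8}\bigl(1-\tfrac{199}{100}\alpha^2\bigr)k + 1$. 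The contradiction forces
$$
|Y| \leq 199\cdot 3^t \leq 199\cdot 3\cdot 3^{(9/8)(1-(199/100)\alpha^2)k} = 597\cdot 3^{(9/8)(1-(199/100)\alpha^2)k},
$$
which is absorbed into the stated constant $600$.

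There is no real obstacle beyond the combinatorial tuning. The only delicate point is picking $m=200$: smaller $m$ gives a worse ratio (Lemma~\ref{lem: opt1}'s bound degrades), while larger $m$ slightly improves the ratio toward $\tfrac12$ but inflates the pigeonhole prefactor, and the constants in the final bound would no longer cleanly match. Verifying the boundary cases of $\alpha$ (where $t=1$ or $t=k$) confirms that the stated interval for $\alpha$ is exactly the range in which the method yields a nontrivial bound.
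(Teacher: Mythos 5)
Your argument is correct and is essentially the same as the paper's: both split a carefully chosen number $t \approx \frac{9}{8}\bigl(1-\frac{199}{100}\alpha^2\bigr)k$ of coordinates into thirds, apply the pigeonhole principle with $m=200$, and then invoke Lemma~\ref{lem:3incube} to force two points too close together. The paper takes $t=\lceil ak\rceil$ and pigeonholes with $200\cdot 3^t$ rather than $199\cdot 3^t$, but these differences are cosmetic and lead to the same constant $600$.
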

\begin{proof}
Let $a=\frac{9}{8}(1-\frac{199}{100}\alpha^2)$. Note that $0<a <1$.
Partition the unit box $[0,1]^k$ into $3^{\lceil ak\rceil}$ boxes $B_1,B_2,\ldots ,B_{3^{\lceil ak \rceil}}$ as follows:
Let $I=\{1,2,\ldots, \lceil ak \rceil\}\subseteq [k]$.
For the coordinates in $I$, we split up $[0,1]$ into 3 disjoint consecutive
$[0,1]=[0,\frac{1}{3})\cup [\frac{1}{3},\frac{2}{3}) \cup [\frac{2}{3},1]$ 
 intervals of length $\frac{1}{3}$ each. 
 If $|Y|> 200 \cdot 3^{\lceil ak \rceil}$, then there exists a box $B_j$ such that at least 200 points from $Y$ are contained in it.
 By Lemma~\ref{lem:3incube}, applied with $m=200$, $\delta=\frac{1}{3}$, $\gamma=1$, $k_1=\lceil ak \rceil$ and $k_2=k-k_1$,
 there exist $p,q\in B_j\cap Y$ such that 
\begin{align*}
  \frac{|p-q|^2}{k}\leq \frac{100}{199}\left(\frac{1}{3}\right)^2  \frac{\lceil ak \rceil}{k}+  \frac{100}{199} \frac{k-\lceil ak \rceil}{k}
  \leq \frac{100}{199}-\frac{8}{9}\frac{100}{199}a=\alpha^2, 
\end{align*}
contradicting $\alpha^2 k< |p-q|^2$. We conclude that 
\begin{equation*}
|Y|\leq 200 \cdot 3^{\lceil ak \rceil}\leq 600 \cdot 3^{ ak }=600 \cdot 3^{\frac{9}{8}\left(1-\frac{199}{100}\alpha^2\right)k}. \qedhere
\end{equation*}
\end{proof}

\begin{lemma} \label{lem:hypercube3}
  There exists $k_0$ such that for all integers $k\geq k_0$ the following holds. Let $0< \alpha < 0.99$ and let  $Y\subseteq [0,1]^k$ such that  $|u-v|> \alpha \sqrt{k}$ for every two distinct points $u,v\in Y$.
  Then $|Y|\alpha^k\leq 0.999^k$.
\end{lemma}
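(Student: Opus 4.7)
The plan is to partition the range $\alpha\in(0,0.99)$ into three subintervals whose endpoints match the range of validity of Lemma~\ref{lem:hypercube2}, and apply one of the two prior lemmas in each subinterval. The key design choice is to take $m=100$ in Lemma~\ref{lem:hypercube1}, because then the constant $\sqrt{\tfrac12(1+\tfrac{1}{2m-1})}$ equals $c_0:=\sqrt{100/199}$, which coincides with the upper endpoint of Lemma~\ref{lem:hypercube2}'s range. With this $c_0$ fixed, the three subintervals $(0,\sqrt{100/1791}]$, $(\sqrt{100/1791},c_0)$, and $[c_0,0.99)$ tile the entire range with no gap, so the small and large tails are handled by Lemma~\ref{lem:hypercube1} while the middle piece is handled by Lemma~\ref{lem:hypercube2}.

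In the large-$\alpha$ regime $[c_0,0.99)$, Lemma~\ref{lem:hypercube1} with $m=100$ forces $\lceil c_0/\alpha\rceil=1$, giving $|Y|\le 200$ and hence $|Y|\alpha^k\le 200\cdot 0.99^k$. In the small-$\alpha$ regime $(0,\sqrt{100/1791}]$, the same lemma with $m=100$ combined with $\lceil c_0/\alpha\rceil\le c_0/\alpha+1$ yields $|Y|\alpha^k\le 200(c_0+\alpha)^k\le 200\cdot(40/(3\sqrt{199}))^k<200\cdot 0.95^k$, using the identity $\sqrt{100/199}+\sqrt{100/1791}=40/(3\sqrt{199})$. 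In the medium regime $(\sqrt{100/1791},c_0)$, Lemma~\ref{lem:hypercube2} gives $|Y|\alpha^k\le 600\cdot g(\alpha)^k$, where $g(\alpha):=\alpha\cdot 3^{(9/8)(1-(199/100)\alpha^2)}$; a short calculus argument, noting that $\log g$ is strictly concave with a unique interior critical point at $\alpha_\ast=\sqrt{400/(1791\ln 3)}\approx 0.451$, shows $g(\alpha)\le g(\alpha_\ast)<0.95$ throughout.

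Combining the three pieces, $|Y|\alpha^k\le 600\cdot 0.99^k$ uniformly for $\alpha\in(0,0.99)$; taking $k_0\ge\lceil\ln 600/\ln(0.999/0.99)\rceil$ (a constant of order a few hundred) absorbs the factor $600$ and finishes the proof. I expect the main obstacle to be the calculus optimization of $g$ in the medium range: the computation itself is elementary, but verifying that $g$ is bounded strictly away from $1$ \emph{uniformly} over the entire open interval $(\sqrt{100/1791},\sqrt{100/199})$, while also checking the two endpoint values to rule out boundary blowup, requires careful bookkeeping. Once that optimization is in hand, the small- and large-$\alpha$ cases are immediate consequences of the single choice $m=100$ in Lemma~\ref{lem:hypercube1}, and the three regimes glue together without any gap precisely because $m=100$ was chosen to match the boundary of Lemma~\ref{lem:hypercube2}'s range.
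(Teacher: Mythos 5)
Your proof is correct and follows essentially the same three-way case split on $\alpha$ that the paper uses, invoking Lemma~\ref{lem:hypercube1} for the tails and Lemma~\ref{lem:hypercube2} for the middle; the only cosmetic differences are that you place the small/middle boundary at $\sqrt{100/1791}$ rather than the paper's $0.29$, and you handle the large-$\alpha$ regime via Lemma~\ref{lem:hypercube1} with $\lceil c_0/\alpha\rceil=1$ rather than via Lemma~\ref{lem:sym} with $m=200$ (the two are equivalent in that range). Your boundary choice is if anything a touch cleaner, since it tiles exactly the validity window of Lemma~\ref{lem:hypercube2} and yields a somewhat smaller admissible $k_0$.
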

\begin{proof}
Let $k_0$ be sufficiently large for the following proof to hold. First, assume $\sqrt{\frac{100}{199}}<\alpha <0.99$. Then $|Y|\leq 200$ by Lemma~\ref{lem:sym},
  applied with $m=200$. Thus, 
\begin{align*}
|Y|\alpha^k \leq 200 \alpha^k \leq 0.999^k.
\end{align*}
Next, assume $0.29\leq \alpha\leq \sqrt{\frac{100}{199}}$. Then by Lemma~\ref{lem:hypercube2},
\begin{align*}
|Y|\alpha^k \leq 600  \cdot \left(3^{\frac{9}{8}\left(1-\frac{199}{100}\alpha^2\right)} \alpha\right)^k \leq 0.999^k.
\end{align*}
Finally, assume $0<\alpha \leq 0.29$. Then by Lemma~\ref{lem:hypercube1}, applied with $m=100$, 
\begin{equation*}
|Y|\alpha^k \leq 200 \left( \left\lceil  \frac{\sqrt{\frac{1}{2}\left(1+\frac{1}{199}\right)}}{\alpha}\right\rceil\alpha \right)^k
\leq 
%200 \left( \left(\frac{\sqrt{\frac{100}{199}}}{\alpha}+1\right)\alpha\right)^k \\
%&= 
200 \left( \sqrt{\frac{100}{199}}+\alpha \right)^k\leq 0.999^k.  \qedhere
\end{equation*}
\end{proof}

\subsection{Proofs of Theorems~\ref{thm:asymp-st}, ~\ref{thm:asymp-hc+}, and \ref{thm:fixedn}}

First, we quickly demonstrate how Lemma~\ref{lem:sym} implies Theorem~\ref{thm:fixedn}. 

\paragraph{Proof of Theorem~\ref{thm:fixedn}.} 
Let $X\subseteq [0,1]^k$ be a point set of size $n$. We run Algorithm 1 from Section~\ref{pathgreedy}.
Let $F_i$ be the collection of paths at the $i$-th step, let $e_i$ be the edge added in the $i$-th step,
and let $F=F_{n-1}$ be the final Hamiltonian path.

We claim that $|e_i| \leq \sqrt{\frac{2}{3}k}$ for $i\leq n-2$. Let $e_i=xy$. The vertices $x$ and $y$
are endpoints of two different paths in $F_{i-1}$. Since $F_{i-1}$ has at least $n-(i-1)\geq n-(n-2-1)=3$
components, there exists a component containing neither $x$, nor $y$. Let $z\in X$ be an endpoint of
the path forming this component. Since $e_i=xy$ was chosen in step $i$,
but $xz$ and $yz$ were not, we have $|xy| \leq |xz|$ and $|xy| \leq |yz|$.
By applying Lemma~\ref{lem:sym} to the set $\{x,y,z\}$, we get that $|e_i|=|xy| \leq \sqrt{\frac{2}{3}k}$.
Note that $|e_{n-1}| \leq \sqrt{k}$  trivially.

Now, let $f=ab$ be the edge where $a$ and $b$ are the  two endpoints of the final path $F$. 
Set $H=F+f$ to be the Hamiltonian cycle when $f$ is added to $F$. Since $|f| \leq \sqrt{k}$  trivially,
we get 
\begin{align*}
  S_k(H)&=\sum_{e\in H}|e|^k= |f|^k+|e_{n-1}|^k+ \sum_{i=1}^{n-2} |e_i|^k \leq
  2\left(\sqrt{k}\right)^k+ (n-2)\left(\sqrt{\frac{2}{3}k}\right)^k.
\end{align*}

Consequently,
\[ \pushQED{\qed} 
s_k^{\texttt{HC}}(n)\leq s_k(H) \leq 2^{1/k}\sqrt{k} \ (1+o_k(1)). \qedhere \popQED \]

\paragraph{Proof of Theorem~\ref{thm:asymp-st}.}
Let $k$ be sufficiently large and let $X\subseteq [0,1]^k$ be a finite point set. Set
\begin{align*}
  \ell:=\left\lceil \log_{1+\frac{1}{k}}\left(0.9 k^{\frac{3}{4}}\right) \right\rceil=
  O(k \log k) \quad \quad \text{and} \quad \quad a_i:=\frac{(1+\frac{1}{k})^i}{k^{\frac{3}{4}}}
\end{align*}
for integers $i$, $0\leq i\leq \ell$. Note that 
\begin{align*}
  \frac{a_{i+1}}{a_i}=1+\frac{1}{k}\ \text{for} \ i\in\{0,1,\dots,\ell-1\}, \quad \text{and}
  \quad a_0 < a_1 < a_2 < \cdots < a_{\ell-1}\leq 0.9 \leq a_\ell. 
\end{align*}
Construct a minimum spanning tree $T$ on vertex set $X$ by successively
joining points from $X$ at minimal distance from each other, given the new edge does not create a cycle.
For $0\leq i\leq \ell$, let $F_i$ be the forest with vertex set $X$ and edges $e\in T$ such that $|e|\leq a_i\sqrt{k}$.
Then, $F_0\subseteq F_1 \subseteq \dots \subseteq F_{\ell} \subseteq T$ since the sequence $(a_i)$ is increasing.
If $x,y\in X$ are in different components of $F_i$, then $|x-y|> a_i \sqrt{k}$.

We have $a_0= k^{-3/4}$.
For an edge $e=xy\in F_0$, let $B_e$ be the open ball of radius $|e|/4$ and center $\frac{1}{2}(x+y)$.
Since $F_0\subseteq T$, by Lemma~\ref{lem:bm92}, the balls $B_e$, $e\in F_0$ are disjoint.
Also, $|e|\leq a_0 \sqrt{k}=k^{-1/4}$. Denote by $V_k$ for the volume of the $k$-dimensional unit ball.
It is well-known that
\begin{equation*} \label{eq:V_k}
V_k= \begin{cases}
\dfrac{\pi^{k/2}}{(k/2)!} & {\rm if \ } k \ {\rm is \ even},
\medskip \\
\dfrac{2^k \cdot \pi^{(k-1)/2} \, ((k-1)/2)!}{k!} & {\rm if \ }
k \ {\rm is \ odd}.
\end{cases}
\end{equation*}

By Stirling's approximation,
$V_k \sim \frac{1}{\sqrt{k\pi}} (\frac{2\pi e}{k})^{k/2}$. Since  $\bigcup_{e\in F_0} B_e\subseteq [-k^{-1/4},1+k^{-1/4}]$,
we have 
\begin{gather*}
\sum_{e\in F_0}\left(\frac{|e|}{4}\right)^k V_k \leq ((1+2k^{-1/4}))^k, \quad
\text{and thus} \quad 
\sum_{e\in F_0}|e|^k\leq \frac{4^k(1+2k^{-1/4})^k}{V_k} %\leq \left(\frac{4(1+2k^{-1/4})\sqrt{k}}{\sqrt{2e\pi}}\right)^k 
\leq  (0.97\sqrt{k})^k,
\end{gather*}
for $k$ sufficiently large. Now, let $i\in \{0,1,\ldots,\ell-1\}$.
Let $Y \subseteq X$ be a set of vertices containing exactly one vertex
from every component of $F_i$. Then $|y-y'|> a_i \sqrt{k}$ for every pair $y\neq y'\in Y$,
and  $|F_{i+1}\setminus F_i|\leq |Y|-1$.
By Lemma~\ref{lem:hypercube3} we have $a_i^k|Y| \leq  0.999^k$ for $i\leq \ell$. Thus,
\begin{gather*}
  \sum_{e\in F_{i+1}\setminus F_i} |e|^k\leq (a_{i+1}\sqrt{k})^k|Y|= (a_{i}\sqrt{k})^k|Y|\left(1+\frac{1}{k}\right)^k
  \leq  3 \cdot (0.999 \sqrt{k})^k,
\end{gather*}
for $i\leq \ell$.
Therefore, 
\begin{align*}
  \sum_{e\in F_\ell} |e|^k&= \sum_{e\in F_0} |e|^k +\sum_{i=0}^{\ell-1} \sum_{e\in F_{i+1}\setminus F_i} |e|^k
  \leq \left(0.97\sqrt{k}\right)^k+ 3 \ell \cdot (0.999 \sqrt{k})^k,
\end{align*}
for $k$ sufficiently large. If the forest $F_\ell$ consist of at least three components then three points $p,q,r\in X$,
from different components each, have pairwise distance at least $0.9\sqrt{k}\geq \sqrt{\frac{2}{3}k}$.
This contradicts Lemma~\ref{lem:sym}. Therefore, $F_\ell$ has at most 2 components and thus there is at most
one edge $f$ in $T$ which is not in $F_\ell$.
We  conclude
 \begin{align*}
    \sum_{e\in T} |e|^k=\sum_{e\in F_\ell} |e|^k+ |f|^k\leq   \left(\sqrt{k}\right)^k(1+o_k(k^{-1})),
\end{align*}
 which implies that for the family of minimum spanning trees, we have $s_k^{\texttt{ST}} \leq  \sqrt{k} \ (1+o_k(1))$,
 completing the proof of Theorem~\ref{thm:asymp-st}.  
 \qed
 
 \medskip
 We remark that by applying Lemma~\ref{lem:bm92b} to $T$, there exists a Hamiltonian cycle $H$ on
 vertex set $X$ satisfying 
 \begin{align*}
    \sum_{e\in H} |e|^k\leq \frac{2}{3}\cdot 3^k
    \sum_{e\in T} |e|^k \leq   \left(3\sqrt{k}\right)^k(1+o_k(k^{-1})),
\end{align*}
 implying  that for the family of Hamiltonian cycles, we have $s_k^{\texttt{HC}} \leq  3\sqrt{k} \ (1+o_k(1))$.

\paragraph{Proof of Theorem~\ref{thm:asymp-hc+}.}
Create a forest $F$ by successively joining points from $X$ at minimal distance from each other,
given the new edge $e$ does not create a cycle and satisfies $|e|\leq k^{-1/4}$.
This process stops when there is no such edge left. Let the trees $T_1,\ldots, T_N$ be the components of~$F$.   
Every two vertices from different $T_i$'s have pairwise distance at least $k^{-1/4}$.

For an edge
$e=xy\in F$, let $B_e$ be the open ball of radius $|e|/4$ and
center $\frac{1}{2}(x+y)$. By Lemma~\ref{lem:bm92}, the balls $B_e$, $e\in F$ are disjoint.
Also, $|e|\leq k^{-\frac{1}{4}}$. We have $\bigcup_{e\in F} B_e\subseteq [-k^{-1/4},1+k^{-1/4}]$.
Writing $V_k$ for the volume of the $k$-dimensional unit ball,
we have 
\begin{gather*}
\sum_{e\in F}\left(\frac{|e|}{4}\right)^k V_k \leq ((1+2k^{-1/4}))^k \quad
\text{and thus} \quad 
\sum_{e\in F}|e|^k\leq \frac{4^k(1+2k^{-1/4})^k}{V_k} %\leq \left(\frac{4(1+2k^{-1/4})\sqrt{k}}{\sqrt{2e\pi}}\right)^k 
\leq  (0.97\sqrt{k})^k, 
\end{gather*}
for $k$ sufficiently large. Since the trees $T_1,\ldots, T_N$ decompose the edge set of the forest $F$, we have
\begin{align}
\label{Thm16_trees}
\sum_{i=1}^N \sum_{e\in T_i} |e|^k= \sum_{e\in F}|e|^k\leq  (0.97\sqrt{k})^k.
\end{align}
By Lemma~\ref{lem:bm92b}, for each $i\in [N]$, there exists a Hamiltonian cycle $H_i$ on $V(T_i)$ such that 
\begin{align}
\label{Thm16_ham}
\sum_{e\in H_i} |e|^k\leq 3^k \sum_{e\in T_i}|e|^k.
\end{align}
Let $F_0$ be the collection of paths obtained by taking the union of all $H_i$, and removing an edge from each cycle. Then, by using \eqref{Thm16_trees} and \eqref{Thm16_ham}, we obtain
\begin{align}
\label{Thm16_start}
\sum_{e\in F_0} |e|^k \leq \sum_{i=1}^N\sum_{e\in H_i} |e|^k\leq 3^k \sum_{i=1}^N\sum_{e\in T_i}|e|^k  \leq (2.91\sqrt{k})^k.
\end{align}
Now, run Algorithm 1 from Section~\ref{pathgreedy} initialized with $F_0$ (instead of the empty graph). Recall that this algorithm adds edges of minimum weight such that in each step we maintain a collection of paths. Denote by $Q$ the final path which is created by this algorithm.  Set
\begin{align*}
  \ell:=\left\lceil \log_{1+\frac{1}{k}}\left(0.9 k^{\frac{3}{4}}\right) \right\rceil=
  O(k \log k) \quad \quad \text{and} \quad \quad a_i:=\frac{(1+\frac{1}{k})^i}{k^{\frac{3}{4}}}
\end{align*}
for integers $i$, $0\leq i\leq \ell$. For $0\leq i\leq \ell$, let $F_i$ be the collection of paths with vertex set $X$ and edges $e\in Q$ such that $|e|\leq a_i\sqrt{k}$.
Then, $F_0\subseteq F_1 \subseteq \dots \subseteq F_{\ell} \subseteq Q$ since the sequence $(a_i)$ is increasing. If $x,y\in X$ are in different components of $F_i$, then $|x-y|> a_i \sqrt{k}$. Now, let $i\in \{0,1,\ldots,\ell-1\}$.
Let $Y \subseteq X$ be a set of vertices containing exactly one endpoint of each path of $F_i$. Then $|y-y'|> a_i \sqrt{k}$ for every pair $y\neq y'\in Y$,
and  $|F_{i+1}\setminus F_i|\leq |Y|-1$.
By Lemma~\ref{lem:hypercube3} we have $a_i^k|Y| \leq  0.999^k\leq 1$ for $i\leq \ell$. Thus,
\begin{gather}
\label{Thm16_analysis}
  \sum_{e\in F_{i+1}\setminus F_i} |e|^k\leq (a_{i+1}\sqrt{k})^k|Y|= (a_{i}\sqrt{k})^k|Y|\left(1+\frac{1}{k}\right)^k
  \leq  3 \cdot \sqrt{k}^k,
\end{gather}
for $i\leq \ell$.
Therefore, by combining \eqref{Thm16_start} with \eqref{Thm16_analysis}, we obtain
\begin{align}
\label{Thm16_Fl}
  \sum_{e\in F_\ell} |e|^k&= \sum_{e\in F_0} |e|^k +\sum_{i=0}^{\ell-1} \sum_{e\in F_{i+1}\setminus F_i} |e|^k
  \leq \left(2.91\sqrt{k}\right)^k+ 3 \ell \cdot  \sqrt{k}^k,
\end{align}
for $k$ sufficiently large. Similarly, as in the proof of Theorem~\ref{thm:asymp-st}, $F_\ell$ has at most $2$ components. Thus, using \eqref{Thm16_Fl}, the path $Q$ satisfies
\begin{gather*}
\sum_{e\in Q}|e|^k \leq \sum_{e\in F_\ell}|e|^k +\sqrt{k}^k \leq  (2.91\sqrt{k})^k (1+o_k(1)).
\end{gather*}
Adding one final edge $f$ of weight at most $|f| \leq \sqrt{k}$ to $Q$ we obtain a Hamiltonian cycle with the desired properties.
\qed

\section{Concluding Remarks} \label{sec:remarks}

The upper bounds we obtained on the lengths of Hamiltonian cycles have the following implications
for the existence of perfect matchings whose cost is bounded from above by a constant (depending on $k$). 
For example, Theorems~\ref{thm:s} and~\ref{thm:vertices} have the following implications.
The proofs of Corollary~\ref{cor:matching} and that of Corollary~\ref{cor:matching-vertices}
are analogous to the proof of Corollary~\ref{cor:matching-square} below. 

\begin{corollary} \label{cor:matching}
 Given $n$ points in $[0,1]^k$, where $k \geq 3$, and $n$ is even, 
 there exists a perfect matching $M$ of the $n$ points such that
  $\left(\sum_{e \in M} |e|^k \right)^{1/k} \leq 3 \sqrt5 \left(\frac13 \right)^{1/k} \cdot \sqrt{k}$.
The matching $M$ can be computed in time proportional to that needed for computing a MST of the points,
  in particular, in subquadratic time. 
\end{corollary}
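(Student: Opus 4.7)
The plan is to derive the matching from the Hamiltonian cycle supplied by Theorem~\ref{thm:s}\,(ii) by the standard alternation trick. Let $H=x_1,x_2,\ldots,x_n,x_1$ be the tour through the $n$ points (with $n$ even) produced by the algorithm of Theorem~\ref{thm:s}, so that
\[
S_k(H)=\sum_{i=1}^n |x_i-x_{i+1}|^k \leq (3\sqrt{5})^k \cdot \tfrac{2}{3} \cdot k^{k/2}.
\]
Since the cycle has even length, its edges split into two edge-disjoint perfect matchings
\[
M_1=\{x_1x_2,\,x_3x_4,\,\ldots,\,x_{n-1}x_n\}, \qquad M_2=\{x_2x_3,\,x_4x_5,\,\ldots,\,x_nx_1\},
\]
and $S_k(M_1)+S_k(M_2)=S_k(H)$.

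By averaging, $M:=\arg\min\{S_k(M_1),S_k(M_2)\}$ satisfies $S_k(M)\leq S_k(H)/2$. Combining this with the bound on $S_k(H)$ above and taking the $k$-th root yields
\[
\left(\sum_{e\in M}|e|^k\right)^{1/k} \leq \left(\tfrac{S_k(H)}{2}\right)^{1/k} \leq 3\sqrt{5}\,\left(\tfrac{1}{3}\right)^{1/k}\sqrt{k},
\]
which is exactly the claimed bound. For the complexity statement, constructing $H$ requires time proportional to that of computing an MST of $X$ by Theorem~\ref{thm:s}\,(iii); the subsequent step of splitting $H$ into $M_1,M_2$, evaluating their unscaled costs, and returning the cheaper one is linear. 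Hence the total running time is dominated by the MST computation and is therefore subquadratic.

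There is no real obstacle here: the entire argument is a one-line corollary of Theorem~\ref{thm:s} via the two-matching decomposition of an even cycle. The only thing to double-check is the constant: the factor $\tfrac{2}{3}$ inside the $k$-th root of $s_k(H)$ becomes $\tfrac{2}{3}\cdot\tfrac{1}{2}=\tfrac{1}{3}$ after halving, which accounts for the change from $(2/3)^{1/k}$ in Theorem~\ref{thm:s}\,(ii) to $(1/3)^{1/k}$ in the statement of the corollary.
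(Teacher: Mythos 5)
Your proof is correct and matches the paper's intended argument exactly: the paper states that Corollary~\ref{cor:matching} is proved analogously to Corollary~\ref{cor:matching-square}, which likewise decomposes the even-length Hamiltonian cycle from Theorem~\ref{thm:s} into two perfect matchings and selects the cheaper one, turning the $(2/3)^{1/k}$ factor into $(1/3)^{1/k}$. The complexity observation is also the same.
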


\begin{corollary} \label{cor:matching-vertices}
There exists an integer $k_0$ such that for all $k\geq k_0$ the following holds.
If $X$ is any even-size subset of vertices of $\{0,1\}^k$, then there exists a perfect matching
$M$ of $X$ such that $s_k(M) \leq \sqrt{k}$. This bound is best possible.  
\end{corollary}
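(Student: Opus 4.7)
The plan is to derive the matching bound as an immediate consequence of the Hamiltonian cycle bound in Theorem~\ref{thm:vertices}. First, I would apply Theorem~\ref{thm:vertices} to the set $X \subseteq \{0,1\}^k$, which for $k \geq k_0$ produces a Hamiltonian cycle $H$ on $X$ with
\[
S_k(H) \;=\; \sum_{e \in H} |e|^k \;\leq\; 2 \cdot k^{k/2}.
\]
Since $|X|$ is even by hypothesis, the cycle $H$ has an even number of edges, so its edges decompose into two edge-disjoint perfect matchings $M_1$ and $M_2$ of $X$ (take every other edge along $H$).

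Next, I would use the additive splitting $S_k(H) = S_k(M_1) + S_k(M_2)$ and an averaging argument: at least one of the two matchings, say $M$, satisfies
\[
S_k(M) \;\leq\; \tfrac{1}{2} S_k(H) \;\leq\; k^{k/2}.
\]
Taking $k$-th roots gives $s_k(M) \leq \sqrt{k}$, which is the desired upper bound. The threshold $k_0$ is the same as in Theorem~\ref{thm:vertices}.

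For the claim that this bound is best possible, I would exhibit the two-point configuration $X = \{(0,\ldots,0), (1,\ldots,1)\}$, which has $|X|=2$ (even) and whose unique perfect matching consists of the single diagonal edge of length $\sqrt{k}$, giving $s_k(M) = \sqrt{k}$. This matches the upper bound exactly.

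There is no significant obstacle here: the argument is essentially a one-line reduction from the Hamiltonian cycle result, using the parity of $|X|$ to split the cycle into two matchings. The only points requiring care are verifying that $|X|$ even implies the edge set of $H$ decomposes into two matchings (which is immediate since a cycle on an even number of vertices is bipartite with respect to its edges), and that the tight example can indeed be realized within $\{0,1\}^k$ (which it is).
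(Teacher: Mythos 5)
Your proof is correct and matches the paper's intended argument: the paper states that Corollary~\ref{cor:matching-vertices} is proved analogously to Corollary~\ref{cor:matching-square}, i.e., by applying Theorem~\ref{thm:vertices} to get a Hamiltonian cycle $H$ with $S_k(H)\leq 2k^{k/2}$, decomposing $H$ into two perfect matchings (possible since $|X|$ is even), and taking the cheaper one. Your tight example $\{(0,\ldots,0),(1,\ldots,1)\}$ is also the one intended.
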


Recall that a MST of $n$ points in $\RR^k$ (with respect to Euclidean distances) can be computed in
$O\left ( n^{2 - \frac{2}{\lceil k/2 \rceil +1} + \eps} \right)$ time, for any $\eps>0$~\cite{AES91}.
%Note also that $\lim_{k \to \infty}  \left(\frac23 \right)^{1/k} = \lim_{k \to \infty} \left(\frac13 \right)^{1/k} =1$. 
We also deduce the following related results (formulated here for the planar case, $k=2$.)

\begin{corollary} \label{cor:nearest} 
  Let $x_1,\ldots,x_n$ be $n \geq 2$ points in the unit square. Let $d_i$ be the distance between $x_i$
  and its nearest point (other than $x_i$). Then the following inequality holds:  $\sum_{i=1}^n d_i^2 \leq 4$.
\end{corollary}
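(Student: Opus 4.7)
The plan is to extract this from Newman's planar bound $s_2^{\texttt{HC}}(n)=2$ already recalled in the introduction: for any $n$ points in the unit square there exists a Hamiltonian cycle $H$ with $\sum_{e \in H}|e|^2 \leq 4$. The bridge from cycle edges to nearest-neighbor distances is the observation that, for each $i$, the two cycle edges incident to $x_i$ both realize distances from $x_i$ to other points of the configuration, so $d_i$ is bounded above by either of them.

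More precisely, I would fix such a cycle $H$ and, for every $i$, let $e_i^{(1)}$ and $e_i^{(2)}$ denote its two edges incident to $x_i$. Then $d_i \leq \min(|e_i^{(1)}|,|e_i^{(2)}|)$, hence
\[
d_i^2 \;\leq\; \min\bigl(|e_i^{(1)}|^2,\,|e_i^{(2)}|^2\bigr) \;\leq\; \tfrac{1}{2}\bigl(|e_i^{(1)}|^2 + |e_i^{(2)}|^2\bigr).
\]
Summing over $i$ and noting that each edge $e \in H$ is incident to exactly two vertices (so its squared length is counted twice on the right, once per endpoint, then multiplied by $\tfrac12$), the right-hand side collapses to $\sum_{e \in H}|e|^2$, yielding $\sum_{i=1}^n d_i^2 \leq \sum_{e\in H}|e|^2 \leq 4$.

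The degenerate case $n=2$ deserves a brief separate sentence, but the arithmetic is immediate: $d_1 = d_2 = |x_1-x_2| \leq \sqrt{2}$, so $d_1^2 + d_2^2 \leq 4$. There is no substantive obstacle here; the whole corollary is a one-line reduction to Newman's theorem through the elementary inequality $\min(a,b) \leq (a+b)/2$, and the only point worth stating carefully is the vertex-edge double counting that guarantees each cycle edge contributes its squared length exactly once after the sum over $i$.
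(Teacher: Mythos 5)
Your proof is correct and follows essentially the same route as the paper: both reduce to Newman's bound $\sum_{e\in H}|e|^2 \le 4$ for some Hamiltonian cycle $H$, using that each $d_i$ is bounded by the length of a cycle edge incident to $x_i$. The only cosmetic difference is that the paper assigns each vertex $x_i$ the single forward edge $x_i x_{i+1}$ (a bijection onto the edges of $H$), while you average over both incident edges and let the double-count cancel the factor $\tfrac12$; the conclusion is identical.
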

\begin{proof}
  Consider a Hamiltonian cycle, say $x_1,\ldots,x_n$, whose cost $S_2(H)$ is at most $4$.
  The distance from $x_i$ to its nearest point is at most $|x_i - x_{i+1}|$, for $i=1,\ldots,n$.
  By squaring the $n$ inequalities and adding them up, the claimed inequality follows.
\end{proof}

An alternative proof of Corollary~\ref{cor:nearest} can be found in~\cite[Problem G.27]{Sze96}.

\begin{corollary} \label{cor:matching-square} 
Let $x_1,\ldots,x_n$ be $n \geq 2$ points in the unit square, where $n$ is even.
Then there exists a perfect matching $M$ such that  $\sum_{e \in M} |e|^2 \leq 2$.
This bound is the best possible. 
\end{corollary}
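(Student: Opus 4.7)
The plan is to derive the corollary from Newman's theorem that $s_2^{\texttt{HC}}(n) \leq 2$, that is, that any $n$ points in the unit square admit a Hamiltonian cycle $H$ with $S_2(H) = \sum_{e \in H} |e|^2 \leq 4$. Since $n$ is even, the cycle $H$ has even length, so its edge set decomposes into two edge-disjoint perfect matchings $M_1$ and $M_2$ (take alternate edges around the cycle). These matchings partition the edges of $H$, so
\[ S_2(M_1) + S_2(M_2) = S_2(H) \leq 4, \]
and by averaging at least one of the two, say $M$, satisfies $S_2(M) \leq 2$. That matching $M$ is the required perfect matching.

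For tightness, I would take the two-point example $x_1 = (0,0)$, $x_2 = (1,1)$ in $[0,1]^2$: the unique perfect matching consists of the single edge $x_1 x_2$ of squared length $|x_1 - x_2|^2 = 2$, meeting the bound with equality. (For larger even $n$ one can cluster extra points near one of the two corners, matching them up within the cluster essentially for free, and keeping the dominant edge at squared length arbitrarily close to $2$.) This shows $2$ cannot be improved.

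There is no real obstacle; the only thing to verify carefully is that the two-coloring of the cycle edges is well-defined, which uses exactly the parity of $n$. The analogous statements for $k \geq 3$ in Corollary~\ref{cor:matching} and Corollary~\ref{cor:matching-vertices} will follow by the same averaging argument, replacing Newman's upper bound of $4$ by the cube-root-of-squared $k$-power bounds from Theorem~\ref{thm:s}(ii) and Theorem~\ref{thm:vertices}, respectively; dividing the unscaled cost by $2$ and taking $k$-th roots accounts for the factor $\left(\tfrac13\right)^{1/k}$ appearing in the stated matching bound (versus $\left(\tfrac23\right)^{1/k}$ for the cycle).
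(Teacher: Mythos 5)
Your proposal is correct and follows essentially the same approach as the paper: take a Hamiltonian cycle of squared cost at most $4$ (Newman), decompose it into two perfect matchings using the even parity of $n$, and pick the cheaper one; the paper's tightness examples (two or four points at opposite corners, and for larger even $n$ points clustered near corners) match your construction in spirit.
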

\begin{proof}
  Consider a Hamiltonian cycle, say $H=x_1,\ldots,x_n$, whose cost $S_2(H)$ is at most $4$.
  $H$ can be decomposed into two perfect matchings, one of which has a cost at most $2$,
  as required.
  
  The lower bounds for $n=2$ and $n=4$ are immediate (see Figure~\ref{fig:square}).
  For every even $n \geq 6$ and $\eps>0$, there are $n$ points (in the neighborhoods of the
  four corners of the square) such that $\sum_{e \in M} |e|^2 \geq 2-\eps$.
\end{proof}

\smallskip
We have improved the upper bound of Bollob{\'a}s and Meir~\cite{BM92} by more than 25 percent in
the exact formulation and by more than 67 percent in the asymptotic formulation.
Apart from some doubt concerning the values of $s_3^{\texttt{HC}}(n)$ and $s_4^{\texttt{HC}}(n)$,
we think that their lower bound gives the right answer for every higher dimension.
In view of Theorem~\ref{thm:s}\,(i) we adjust their conjecture as follows: 

\begin{conjecture} \label{conj:cycle}
  For Hamiltonian cycles, the following equalities hold:
 \[ s_k^{\texttt{HC}}= \begin{cases}
    2^{7/6}, & \text{ for } k =3,  \\ 
    2^{1/k} \cdot \sqrt{k}, & \text{ for } k \geq 4.
  \end{cases}
  \]
\end{conjecture}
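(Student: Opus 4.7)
The conjecture splits naturally into the $k=3$ case and the $k \geq 4$ case, with different regimes requiring different tools, and my plan is to attack the two separately.

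For $k \geq 4$, my plan is to refine the combinatorial analysis behind Theorem~\ref{thm:vertices} and extend it from $\{0,1\}^k$ to arbitrary point sets in $[0,1]^k$. The key observation is that the density lemmas driving that proof, namely Lemmas~\ref{lem:sym}, \ref{lem:hypercube1}, \ref{lem:hypercube2}, and \ref{lem:hypercube3}, already hold for general point sets; only the binary-code bounds (Lemmas~\ref{lem:singleton} and~\ref{lem:singleton2}) are vertex-specific, and they can be replaced by applying Lemma~\ref{bincode} to the endpoints of path components in Algorithm~1. I would run Algorithm~1 to build a Hamiltonian path $F$, close it with an extra edge $f$ of length at most $\sqrt{k}$, and partition the edges of the resulting cycle into an exponentially spaced family of length classes $(a_{i-1}\sqrt{k}, a_i\sqrt{k}]$ as in the proof of Theorem~\ref{thm:asymp-st}. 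For the shortest classes (where the count of edges might be exponentially large), I would use the ball-packing argument of Lemma~\ref{lem:bm92} to get a contribution strictly less than $(\sqrt{k})^k$; for the longer classes, I would use Lemma~\ref{lem:hypercube3} to bound $|Y|\alpha_i^k$, where $Y$ contains one endpoint from each path component of $F_i$. The single very long edge plus the closing edge $f$ should together contribute at most $2(\sqrt{k})^k$, matching $2^{1/k}\sqrt{k}$ after taking $k$-th roots.

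The main obstacle is sharpness: Lemma~\ref{lem:hypercube3} gives $|Y|\alpha^k \leq 0.999^k$, which is enough to absorb the medium-range edges into an $o_k(1)$ correction but is not tight enough to force the total cost down to exactly $2(\sqrt{k})^k$. Pushing through the tight constant would require a sphere-packing style bound of the form $|Y|\alpha^k \leq 2^{o(k)}$ for every $\alpha \in (0,1)$ and every $Y \subseteq [0,1]^k$ with pairwise distances exceeding $\alpha\sqrt{k}$, which in turn is tied to sharp estimates for codes and kissing numbers in high dimensions. This explains why our Theorem~\ref{thm:asymp-hc+} stops at $2.91\sqrt{k}$ rather than $2^{1/k}\sqrt{k}$, and closing this remaining factor is, in my view, the deepest part of the conjecture.

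For $k = 3$ the approach must be different, since the conjectured value $2^{7/6}$ is attained by a $4$-point binary code rather than by a diameter pair. My plan here is a compactness argument combined with a structural classification: first show by standard compactness that the supremum $s_3^{\texttt{HC}}(n)$ is attained for each $n$, then argue that an extremal configuration in $[0,1]^3$ either reduces to the tight $4$-point example (after clustering redundant points) or admits a local perturbation that strictly decreases its optimal tour cost. For small $n$ (say $n \leq 8$) this could be verified by a careful case analysis or even a computer-assisted optimization over configuration space, while for larger $n$ one would need a general argument showing that adding points cannot increase the shortest-tour cost beyond the worst small configuration. The hardest step will be treating the ``medium'' $n$ range, where the extremal configurations are not yet forced to have the nice combinatorial structure of a binary code but already have too many points for brute-force analysis.
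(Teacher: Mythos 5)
The statement you are addressing is Conjecture~\ref{conj:cycle}, and the paper does not prove it --- it is explicitly stated as an open conjecture, motivated by the $2$-point diameter example (for $k\geq 4$) and the $4$-point binary code of Theorem~\ref{thm:s}(i) (for $k=3$), together with the asymptotic evidence of Theorems~\ref{thm:asymp-hc+} and~\ref{thm:fixedn}. There is therefore no paper proof against which to compare your attempt, and your own text makes clear that what you have written is a research plan rather than a proof: you say ``my plan is to\ldots'' throughout and explicitly concede that the key step cannot be carried out with the available tools.

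Concretely, your $k\geq 4$ outline runs Algorithm~1, closes the path, and organizes edges by length classes --- this is indeed the machinery of Theorems~\ref{thm:asymp-st} and~\ref{thm:asymp-hc+} --- but as you yourself observe, the bottleneck is Lemma~\ref{lem:hypercube3}, which only gives $|Y|\alpha^k \leq 0.999^k$. That bound is strong enough to absorb the intermediate length classes into an $o_k(1)$ correction, but nowhere near strong enough to force the total down to $2 (\sqrt{k})^k$; you would need something like $|Y|\alpha^k \leq 2^{o(k)}$ uniformly in $\alpha$, which is an open sphere-packing-type question, and you do not supply it. (A smaller technical point: Lemma~\ref{bincode} as stated relies on $P_\ell$ being a binary code, so replacing it for general point sets requires a restatement in terms of minimum pairwise distance, which you allude to but do not write down.) For $k=3$ you propose compactness plus structural classification plus possible computer verification, but no case analysis, no perturbation argument, and no computation is actually carried out, and the ``medium $n$'' range you flag as hardest is left entirely open. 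In short, your writeup is a reasonable and largely accurate diagnosis of why Conjecture~\ref{conj:cycle} is difficult, not a proof of it, and the conjecture remains open in the paper as well.
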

\paragraph{Hamiltonian path.}
If one was looking for a Hamiltonian path, instead of a Hamiltonian cycle, then the $2$-point extremal lower bound
example (given by a cube diagonal) loses a factor of $2$ (or with scaling $2^{1/k}$); and so the question arises:
is it still the best example, or maybe only for large $k$? Analogous to the situation for Hamiltonian cycles,
we think that there is a threshold value for $k$ after which the extremal  examples stabilizes at the $2$-point example.
The  threshold values for cycles and paths seem to differ, see Conjecture~\ref{conj:path} below.

The current upper bound proofs essentially remain the same as for Hamiltonian cycles, with the change
that the last edge is not needed.
Some upper bounds remain unchanged, and others do improve.
In particular, $s_2^{\texttt{HP}} \leq s_2^{\texttt{HC}} = 2$ remains unchanged, whereas $s_2^{\texttt{HP}} \geq \sqrt3$
is implied by the two extremal examples in Figure~\ref{fig:square}\,(left and right).

From the other direction, for small values of $k$ consider once again a binary code of length $k$ with minimum
distance $2$ given by the set of all $x \in \{0,1\}^k$ with an even number of $1$'s.
It yields the values specified below.

\begin{conjecture} \label{conj:path}
  For Hamiltonian paths, the following equalities hold:
  \[ s_k^{\texttt{HP}}= \begin{cases}
    \sqrt{3}, & \text{ for } k =2, \\ 
    \left( 2^{k-1} -1 \right)^{1/k} \cdot \sqrt2, & \text{ for } k = 3,4,5,6, \\
    \sqrt{k}, & \text{ for } k \geq 7. 
  \end{cases}
  \]
\end{conjecture}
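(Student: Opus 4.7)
The strategy is to pin down the extremal configurations in each of the three regimes and match them with an algorithmic upper bound. On the lower-bound side, for $k=2$ either configuration in Figure~\ref{fig:square} shows $s_2^{\texttt{HP}}\geq\sqrt3$; for $k\in\{3,4,5,6\}$ the even-weight binary code $X_k=\{x\in\{0,1\}^k : \sum_i x_i \text{ is even}\}$ has $2^{k-1}$ points with pairwise Euclidean distance $\geq\sqrt2$, forcing any Hamiltonian path to satisfy $S_k(F)\geq (2^{k-1}-1)(\sqrt2)^k$; and for $k\geq 7$ the single edge between a diagonal pair already yields $\sqrt k$. A direct comparison of the three expressions confirms that the stated value is the maximum of the three candidates in each regime.

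For the upper bounds, the cube-vertex subcase is essentially free. The proof of Theorem~\ref{thm:vertices} bounds the Hamiltonian path $F$ produced by Algorithm~1 (see inequality~\eqref{ineq:4con}) by strictly less than $k^{k/2}$, and only afterwards adds one closing edge of cost at most $\sqrt{k}^{\,k}$ to produce a Hamiltonian cycle of total cost $\leq 2k^{k/2}$. For a Hamiltonian path the closing edge is unnecessary, so $s_k^{\texttt{HP}}(X)\leq\sqrt k$ for all subsets $X$ of $\{0,1\}^k$ with $k\geq k_0$, which already settles the cube-vertex version of the conjecture for $k\geq 7$. The remaining finite cases $k\in\{3,4,5,6\}$ should yield to a more careful bookkeeping of the four edge classes (short, medium, long, very long) in that proof, where the dominant contribution comes from edges of squared length exactly~$2$, realized by the even-weight code. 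For $k=2$ I would adapt Newman's proof of $s_2^{\texttt{HC}}=2$: take a Hamiltonian path minimizing $\sum_e |e|^2$ and apply local exchanges at each internal vertex to force its cost down to at most~$3$.

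The principal obstacle is the general-position case for $k\geq 7$. Here Theorem~\ref{thm:asymp-hc+} currently delivers only $s_k^{\texttt{HC}}\leq 2.91\sqrt k\,(1+o_k(1))$, and omitting a single edge saves essentially nothing after taking $k$-th roots. To reach the conjectured $\sqrt k$, the tree-to-cycle conversion of Lemma~\ref{lem:bm92b}, with its lossy $\tfrac23\cdot 3^k$ factor, would have to be replaced in the path setting by a tree-to-path conversion losing only a $(1+o_k(1))^k$ factor. A plausible route is to exploit that a Hamiltonian path in the tree cube $T^3$ can be chosen to use each edge of $T$ at most twice instead of three times, enabling a two-vector decomposition in the spirit of Lemma~\ref{lem:bm92b} with a substantially smaller constant; whether this genuinely absorbs the remaining factor of $2.91$, or whether an entirely different tool (a direct greedy analysis, or a recursive partition of the cube) is needed, is the central uncertainty of the proposed approach.
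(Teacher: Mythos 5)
This statement is a \emph{conjecture}, and the paper does not prove it; the surrounding text only explains where the conjectured values come from. Your sketch of the lower bounds matches the paper's motivation precisely: the $4$-point square for $k=2$, the even-weight binary code of length $k$ (a code of minimum Hamming distance $2$, hence all pairwise Euclidean distances at least $\sqrt2$) for small $k$, and the cube diagonal for large $k$. Your check that each proposed formula dominates the others in its claimed range is the right sanity test and is consistent with the conjecture as stated.

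However, there is a concrete overclaim in your upper-bound discussion. You argue that dropping the closing edge from the proof of Theorem~\ref{thm:vertices} gives $s_k^{\texttt{HP}}(X)\leq\sqrt k$ for all $X\subseteq\{0,1\}^k$, and then conclude this ``already settles the cube-vertex version of the conjecture for $k\geq 7$.'' It does not: the inequality $S_k(F)<(\sqrt k)^k$ in~\eqref{ineq:4con} requires $k\geq k_0$, and the paper states explicitly that its proof works for $k_0=29$. For $k\in\{7,\ldots,28\}$ the cube-vertex upper bound is not known, and of course the conjecture concerns general point sets, not only subsets of $\{0,1\}^k$. Similarly, for $k\in\{3,4,5,6\}$ and for $k=2$ you gesture at ``more careful bookkeeping'' and an ``adaptation of Newman's proof'' without a real argument; matching the conjectured constant \emph{exactly} is precisely the hard part, and no such argument exists in the paper either.

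Finally, the speculative route you propose for $k\geq 7$---replacing the $\frac23\cdot3^k$ loss of Lemma~\ref{lem:bm92b} by a two-vector decomposition, i.e., having each $H$-edge use at most two tree edges---is exactly the improvement the paper itself discusses in the ``Further improvement'' remark, where it also gives a counterexample (the tree with edges $ab,bc,cd,de,cf,fg$) showing that a uniform bound of two tree edges per cycle/path edge is impossible. So this particular line, as stated, is blocked; any proof of the conjecture would need a genuinely new tree-to-path conversion (or a different method entirely), and the gap between the known $2.91\sqrt k\,(1+o_k(1))$ and the conjectured $\sqrt k$ remains wide open.
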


\paragraph{Further improvement.}
One might wonder where the next possible improvement is? We feel that it is in Lemma~\ref{lem:sek}:
It states that there is a Hamiltonian cycle such that each edge of the cycle is using at most 3 tree edges,
yet the average usage is slightly less than 2. If it was true that every tree edge is used at most twice,
then we would get a 2/3 factor improvement in the upper bound. However, the example of a tree with edges
$ab,bc,cd,de,cf,fg$ shows that this is not the case. Still, it is likely that there is a way to gain more 
in a tree to cycle or path conversion.

\paragraph{A different version.}
We conclude with yet another version of the problem. 
Instead of the unit cube $[0,1]^k \subset \RR^k$, let the diameter of the point set be at most $1$: 
That is, $\diam(X) \leq 1$, where $X \subset \RR^k$ and $|X|=n$. 
What are the extremal values of the (say, unscaled) costs of a shortest Hamiltonian cycle (and path)
for $n$ points in $\RR^k$ under this constraint? Are they given by the vertices of a unit simplex in $\RR^k$
($k+1$ and $k$, respectively)?

\section*{Acknowledgements}
We thank anonymous referees for carefully reading the manuscript.

\section*{Conflict of interest}
The authors have no relevant financial or non-financial interests to disclose.

\section{Appendix}

\subsection*{Container shapes with a tight bound when $k=2$}

A key fact in deriving the tight bound, when $k=2$, for the cycle of $n$ points in the unit square is the following
tight bound in a right triangle~\cite{New82}; see also~\cite{Bol06}. 

\begin{lemma} {\rm \cite{New82}}  \label{lem:newman}
  Let $X$ be a set of $n \geq 2$ points in a right triangle $\Delta$ whose sides are $a \leq b \leq c$. Then there is an
  extended path connecting the endpoints of $c$ that visits all points in $X$ and for which
  $\sum |e|^2 \leq c^2$. In particular, $X$ admits a Hamiltonian path $P$ for which
  $\sum_{e \in P} |e|^2 \leq c^2$. This bound is the best possible. 
\end{lemma}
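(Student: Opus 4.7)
The plan is to prove the stronger ``extended path'' version by induction on $n=|X|$. Let $A,B$ be the endpoints of the hypotenuse $c$ and $C$ the right-angle vertex. Drop the altitude from $C$ to $AB$, meeting it at a foot $D$, and partition $X$ into $X_1\subseteq\triangle ADC$ and $X_2\subseteq\triangle DCB$ (assigning points on the altitude segment $CD$ to either side arbitrarily). The two sub-triangles are themselves right triangles, with right angles at $D$ and hypotenuses $AC$ and $CB$ of lengths $b$ and $a$ respectively. By induction, obtain an extended path from $A$ to $C$ through $X_1$ with squared-length sum at most $b^2$, and an extended path from $C$ to $B$ through $X_2$ with sum at most $a^2$; concatenating them at $C$ yields an extended path from $A$ to $B$ through $X$ whose total squared length is at most $a^2+b^2=c^2$ by the Pythagorean theorem.

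The base cases are $n=0$ and $n=1$. For $n=0$ the extended path is the single edge $AB$ with squared length exactly $c^2$. For $n=1$, given a single point $P$ inside a right triangle with hypotenuse $AB$, we must show $|AP|^2+|PB|^2\leq|AB|^2$, which by the law of cosines is equivalent to $\angle APB\geq 90^\circ$. This follows from Thales' theorem: the right-angle vertex $C$ lies on the circle with diameter $AB$, so $\triangle ABC$ is contained in the closed half-disk on one side of $AB$, and every $P$ inside it satisfies $\angle APB\geq 90^\circ$. I expect this Thales-type base case to be the main conceptual ingredient; once it is in hand, the inductive step is essentially a telescoping use of the Pythagorean identity $a^2+b^2=c^2$.

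For the ``in particular'' part concerning a Hamiltonian path on $X$ alone, simply delete from the extended path the first and last edges (those incident to $A$ and $B$); this only decreases the squared-length sum, so the remaining path through $X$ still has total squared length at most $c^2$. For tightness, place $\lceil n/2\rceil$ points arbitrarily close to $A$ and $\lfloor n/2\rfloor$ arbitrarily close to $B$: any Hamiltonian path on $X$ must use essentially one edge of length close to $c$ together with remaining edges of negligible length, so the sum approaches $c^2$ and the bound cannot be improved.
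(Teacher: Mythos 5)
Your core idea --- drop the altitude from the right angle, recurse on the two similar right sub-triangles, use the Thales/cosine-law observation $|AP|^2+|PB|^2\le |AB|^2$ as the base case, and telescope via $a^2+b^2=c^2$ --- is the classical Newman argument, and it is the one the paper has in mind (the paper does not reprove Lemma~\ref{lem:newman}; it cites Newman and isolates Lemma~\ref{lem:shortcut} as ``the key tool''). However, as written there are two genuine gaps.

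First, the induction on $n=|X|$ does not close. When the altitude foot $D$ is dropped, nothing forces both $X_1$ and $X_2$ to be proper subsets of $X$: it can happen that every point of $X$ lies in, say, $\triangle ADC$, in which case $|X_1|=n$ and your inductive hypothesis (for fewer than $n$ points) does not apply. You need an extra termination argument: the two sub-triangles are similar to $\Delta$ with shrinkage factor $\max(a/c,b/c)<1$, so after finitely many ``all on one side'' subdivisions the diameter drops below the minimum pairwise distance in $X$ and the set must split. Formally this is a recursion on the subdivision tree (or an induction on $n$ together with a secondary termination lemma), not a bare induction on $n$.

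Second, the passage from the extended path to the Hamiltonian path on $X$ is incomplete. Your extended path passes through the right-angle vertex $C$ (and, at deeper recursion levels, through the altitude feet $D$, $D'$, \ldots), and these are generally not in $X$. Deleting ``the first and last edges'' only removes $A$ and $B$; the interior auxiliary vertices remain. To remove them you must shortcut: replace the two edges $p_1C$, $Cp_2$ by the single edge $p_1p_2$, and this is cost-nonincreasing precisely because $\angle p_1Cp_2\le \angle ACB=90^\circ$ (the rays $Cp_1$, $Cp_2$ lie inside the two sub-triangles, whose angular sectors at $C$ partition the right angle). This is exactly Lemma~\ref{lem:shortcut}, and it is also how the paper handles the analogous concatenation vertex in the proof of Lemma~\ref{lem:non-obtuse-hp}. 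With both fixes your argument goes through; the tightness example is fine, and in fact the two-point set $X=\{A,B\}$ already suffices.
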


This result relies on a repeated application of the following simple corollary of the Cosine Law.
It allows one to make shortcuts in a path or cycle at vertices where the two adjacent edges make an acute angle. 

\begin{lemma} {\rm \cite{New82}}  \label{lem:shortcut}
  Let $\Delta$ be an triangle whose sides are $a \leq b \leq c$, and let $\gamma$ be the
  interior angle opposite to $c$. If $\gamma \leq 90^\circ$, then $c^2 \leq a^2 + b^2$.
\end{lemma}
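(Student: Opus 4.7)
The plan is to invoke the Law of Cosines directly. Writing the squared length of the side opposite $\gamma$ in terms of the other two sides and the included angle gives
\[
c^2 = a^2 + b^2 - 2ab\cos\gamma.
\]
This identity will do essentially all the work: the conclusion reduces to verifying that the correction term $-2ab\cos\gamma$ is nonpositive under the hypothesis $\gamma \leq 90^\circ$.

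First I would state the Law of Cosines in the form above. Then, since $\gamma \in (0,90^\circ]$, cosine is nonnegative on this range, so $\cos\gamma \geq 0$. Combined with $a,b > 0$, this yields $2ab\cos\gamma \geq 0$, and therefore $c^2 \leq a^2 + b^2$, as required. No use of the ordering $a \leq b \leq c$ is needed for the inequality itself; that ordering is only a convention from the ambient context in which the lemma is applied (Lemma~\ref{lem:newman}), where $c$ denotes the longest side of a right triangle and the shortcut is being taken.

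There is essentially no obstacle here: the statement is the standard ``acute angle'' specialization of the Law of Cosines, with equality precisely when $\gamma = 90^\circ$ (the Pythagorean case). If a proof avoiding trigonometry is preferred, one can alternatively drop the perpendicular from the vertex opposite $a$ (or opposite $b$) onto the line containing $c$, split $c$ into two signed segments, and apply the Pythagorean theorem to each of the two resulting right triangles; the hypothesis $\gamma \leq 90^\circ$ guarantees that the foot of this perpendicular lies on the closed segment, which is exactly what makes the two segment-lengths combine to give $c^2 \leq a^2 + b^2$ rather than the reverse. Either route is routine, so I would present the Law of Cosines version for brevity.
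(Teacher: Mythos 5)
Your proof is correct and matches the approach the paper intends: the paper does not spell out a proof but explicitly describes the lemma as ``a simple corollary of the Cosine Law,'' which is precisely the Law-of-Cosines computation you give. Your observation that the ordering $a \leq b \leq c$ is not needed for the inequality itself is also accurate; it is carried along only because of how the lemma is applied in Lemma~\ref{lem:newman}.
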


We now exhibit two other container shapes for which we can deduce a tight bound.
Lemma~\ref{lem:non-obtuse-hp} below is an extension of Lemma~\ref{lem:newman}.

\begin{figure}[ht]
\centering   
\includegraphics[scale=1.1]{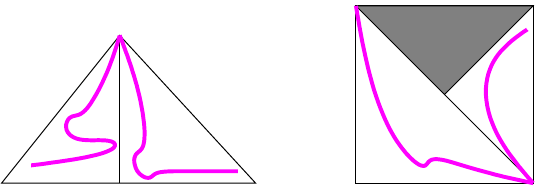}      
\caption{Left: Lemma~\ref{lem:non-obtuse-hp}. Right: Lemma~\ref{lem:envelope}.}
\label{fig:shapes}
\end{figure}

\begin{lemma} \label{lem:non-obtuse-hp}
  Let $X$ be a set of $n \geq 2$ points in a non-obtuse triangle $\Delta$ whose sides are $a \leq b \leq c$.
  Then there is an extended path connecting the endpoints of $c$ that visits all points in $\Delta$ and for which
  $\sum |e|^2 \leq a^2 + b^2$. In particular, $X$ admits a Hamiltonian path $P$ for which
  $\sum_{e \in P} |e|^2 \leq a^2 + b^2$. This bound is the best possible.
\end{lemma}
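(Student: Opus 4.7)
The plan is to reduce the non-obtuse case to two applications of Newman's Lemma~\ref{lem:newman} via a standard altitude decomposition. I would first label the vertices of $\Delta$ so that $BC=a$, $AC=b$, $AB=c$, and set $\gamma := \angle ACB \leq 90^{\circ}$. The key geometric step is to drop the altitude from $C$ onto $AB$, with foot $F$. Because all angles of $\Delta$ are non-obtuse—in particular the angles at $A$ and $B$—the foot $F$ lies on the segment $AB$ itself, so $\Delta$ decomposes into two right triangles: $\Delta_1 = \triangle ACF$, with the right angle at $F$ and hypotenuse $AC = b$, and $\Delta_2 = \triangle BCF$, with the right angle at $F$ and hypotenuse $BC = a$.

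Next I would partition $X$ as $X_1 \subseteq \Delta_1$ and $X_2 \subseteq \Delta_2$, assigning any points lying on the altitude $CF$ arbitrarily to either side. Applying Lemma~\ref{lem:newman} to $X_1$ inside $\Delta_1$ yields an extended path $\pi_1$ from $A$ to $C$ visiting $X_1$ with $\sum_{e \in \pi_1}|e|^2 \leq b^2$; applying it to $X_2$ inside $\Delta_2$ yields an extended path $\pi_2$ from $C$ to $B$ visiting $X_2$ with $\sum_{e \in \pi_2}|e|^2 \leq a^2$. Concatenating $\pi_1$ and $\pi_2$ at the common vertex $C$ gives an extended path from $A$ to $B$ visiting every point of $X$ with total squared length at most $a^2+b^2$, which is the first claim of the lemma.

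To pass from this extended path to a Hamiltonian path of $X$ alone, I would simply delete whichever of $A$, $B$, $C$ do not belong to $X$. Dropping an endpoint $A$ or $B$ only removes an edge, so it cannot increase the sum. If $C \notin X$, let $u \in X_1$ and $v \in X_2$ be its two neighbors along the concatenated path; since the ray $Cu$ lies in the cone $\angle ACF$ and the ray $Cv$ lies in the cone $\angle FCB$, we get
\[
 \angle uCv \leq \angle ACF + \angle FCB = \gamma \leq 90^{\circ},
\]
and Lemma~\ref{lem:shortcut} then gives $|u-v|^2 \leq |u-C|^2 + |C-v|^2$, so replacing the two edges at $C$ by the single edge $uv$ preserves the bound. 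This delivers the Hamiltonian path statement. For tightness, the configuration $X = \{A,B,C\}$ suffices: the three Hamiltonian paths of $X$ have sums of squared edge lengths $a^2+b^2$, $a^2+c^2$, and $b^2+c^2$, so (since $c$ is the largest side) the minimum is exactly $a^2+b^2$.

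The main—and in fact only nontrivial—step beyond invoking Newman's lemma is the angle bookkeeping at $C$: one must check both that $F$ lies inside the segment $AB$ and that any ray from $C$ into $\Delta_1$, paired with any ray from $C$ into $\Delta_2$, subtends an angle of at most $\gamma$. Both assertions are immediate from the non-obtuseness of $\Delta$, but they are precisely where the hypothesis of the lemma is genuinely used.
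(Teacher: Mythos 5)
Your proposal is correct and follows essentially the same route as the paper's proof: drop the altitude from the vertex opposite $c$ onto $c$, decompose $\Delta$ into the two resulting right triangles with hypotenuses $a$ and $b$, apply Newman's Lemma~\ref{lem:newman} to each to get extended paths with squared costs $\leq a^2$ and $\leq b^2$, concatenate them at the apex $C$, and shortcut there via Lemma~\ref{lem:shortcut}, using that the two incident edges subtend an angle $\leq \gamma \leq 90^\circ$. Your write-up merely makes explicit the angle bookkeeping (that $F$ lies in the interior of $AB$ and that $\angle uCv \leq \gamma$) that the paper leaves implicit, and the tight example $\{A,B,C\}$ is the same one the paper cites.
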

\begin{proof}
  Let the altitude corresponding to $c$ divide $\Delta$ into two right triangles.
  Consider the path obtained by concatenating the extended paths for the two right triangles.
  Further shortcut the path at the concatenation vertex by using Lemma~\ref{lem:shortcut} to obtain 
  a Hamiltonian path $P$ for which $\sum_{e \in P} |e|^2 \leq a^2 + b^2$, see Figure~\ref{fig:shapes}\,(left).
  The three vertices of $\Delta$ provide a tight example.
\end{proof}

\begin{lemma} \label{lem:non-obtuse-hc}
  Let $X$ be a set of $n \geq 2$ points in a non-obtuse triangle $\Delta$ whose sides are $a \leq b \leq c$.
  Then $X$ admits a Hamiltonian cycle $H$ for which $\sum_{e \in H} |e|^2 \leq a^2 + b^2+ c^2$. This bound is
  the best possible.
\end{lemma}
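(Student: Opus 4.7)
The plan is to bootstrap from Lemma~\ref{lem:non-obtuse-hp} by attaching one extra edge to close the path into a cycle. First, apply Lemma~\ref{lem:non-obtuse-hp} to the point set $X$ inside the non-obtuse triangle $\Delta$ to obtain a Hamiltonian path $P$ on $X$ satisfying $\sum_{e \in P} |e|^2 \leq a^2 + b^2$. Let $u, v \in X$ denote the two endpoints of $P$. To produce a Hamiltonian cycle $H$ on $X$, simply add the closing edge $uv$, so $H = P + uv$.

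The key observation is that since $u, v \in \Delta$ and the diameter of a non-obtuse triangle equals its longest side, we have $|uv| \leq c$, hence $|uv|^2 \leq c^2$. Adding this to the bound from Lemma~\ref{lem:non-obtuse-hp} gives
\begin{equation*}
\sum_{e \in H} |e|^2 = \sum_{e \in P} |e|^2 + |uv|^2 \leq (a^2 + b^2) + c^2,
\end{equation*}
which is the claimed upper bound.

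For tightness, take $n = 3$ and let $X$ consist of the three vertices of $\Delta$. The only Hamiltonian cycle on $X$ is the boundary triangle itself, whose squared edge-length sum is precisely $a^2 + b^2 + c^2$. This matches the upper bound, so the inequality cannot be improved.

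There is no real obstacle here, since the cycle bound follows essentially immediately by combining Lemma~\ref{lem:non-obtuse-hp} with the diameter bound for a non-obtuse triangle. The only subtlety worth recording is the diameter claim: in a non-obtuse triangle, the longest side is opposite the largest angle, and that angle is at least $60^\circ$, so all interior point-to-point distances are bounded by $c$; this is what makes the closing edge cost at most $c^2$ rather than something larger.
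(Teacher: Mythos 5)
Your proof is correct and matches the paper's argument: close the Hamiltonian path from Lemma~\ref{lem:non-obtuse-hp} with an edge of length at most $c$, and use the three vertices of $\Delta$ for tightness. One small remark: the diameter of \emph{any} triangle (not just a non-obtuse one) equals its longest side, since the diameter of a convex polygon is attained at a pair of vertices, so the non-obtuse hypothesis is not what bounds the closing edge; it is needed only to invoke Lemma~\ref{lem:non-obtuse-hp}.
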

\begin{proof}
  By Lemma~\ref{lem:non-obtuse-hp}, $X$ admits a Hamiltonian path $P$ for which $\sum_{e \in P} |e|^2 \leq a^2 + b^2$.
  Connecting the endpoints of this path (via an edge of length at most $c$) yields a Hamiltonian cycle $H$
  for which $\sum_{e \in H} |e|^2 \leq a^2 + b^2+ c^2$.
\end{proof}

By Lemma~\ref{lem:non-obtuse-hp} and ~\ref{lem:non-obtuse-hc}, we obtain the following corollary.
\begin{corollary} \label{cor:non-obtuse}
  Let $X$ be a finite point set in in a non-obtuse triangle $\Delta$ whose sides are $a \leq b \leq c \leq 1$.
  Then
\begin{enumerate} \itemsep 1pt
  \item $X$ admits a Hamiltonian path $P$ for which $\sum_{e \in P} |e|^2 \leq 2$.
  \item $X$ admits a Hamiltonian cycle $H$ for which $\sum_{e \in H} |e|^2 \leq 3$.
\end{enumerate}
\end{corollary}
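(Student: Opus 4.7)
The corollary is essentially a direct consequence of the two preceding lemmas combined with the hypothesis $a\leq b\leq c\leq 1$, so my plan is to invoke them as black boxes and then bound the side-length expressions by constants.

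For part (1), the plan is to apply Lemma~\ref{lem:non-obtuse-hp} to $X$ inside $\Delta$. That lemma produces a Hamiltonian path $P$ on $X$ satisfying $\sum_{e\in P}|e|^2 \leq a^2+b^2$. Since $a\leq 1$ and $b\leq 1$ by hypothesis (as $a\leq b\leq c\leq 1$), we immediately conclude $\sum_{e\in P}|e|^2 \leq 1+1 = 2$.

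For part (2), I would similarly invoke Lemma~\ref{lem:non-obtuse-hc} to obtain a Hamiltonian cycle $H$ with $\sum_{e\in H}|e|^2 \leq a^2+b^2+c^2$. Using $a,b,c \leq 1$ again gives $\sum_{e\in H}|e|^2 \leq 3$. No further work is required, since both lemmas are already proved earlier in the appendix.

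There is essentially no obstacle here; the only substantive step is recognizing that the sharp triangle-dependent bounds from Lemmas~\ref{lem:non-obtuse-hp} and \ref{lem:non-obtuse-hc} degrade to the constants $2$ and $3$ precisely because the container triangle is constrained to have all sides of length at most $1$. One could mention for context that tightness of the two preceding lemmas (attained at the three vertices of $\Delta$) carries over to tightness here in the limiting case where $\Delta$ is a right isosceles triangle with legs of length $1$ (so $a=b=1$ and $c=\sqrt{2}$), although strictly speaking the corollary as stated only asserts the upper bounds and does not claim optimality.
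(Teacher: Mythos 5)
Your proof is correct and matches the paper's intent exactly; the paper simply asserts the corollary follows from Lemmas~\ref{lem:non-obtuse-hp} and~\ref{lem:non-obtuse-hc} without spelling out the trivial step of bounding $a^2+b^2$ and $a^2+b^2+c^2$ by $2$ and $3$ via $a,b,c\leq 1$. (Minor aside: your tightness remark about a right isosceles triangle with legs $1$ has $c=\sqrt2>1$, which violates the corollary's hypothesis, but since you correctly note the corollary claims only upper bounds, this does not affect the argument.)
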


\begin{lemma} \label{lem:envelope}
    Let $U$ be a unit square centered at $o$ and let $ab$ be one of its four sides.
    Let $X$ be a set of $n \geq 2$ points in $V:=U \setminus \Delta{oab}$ ($V$ as a closed set).
    Then $X$ admits a Hamiltonian path $P$ for which $\sum_{e \in P} |e|^2 \leq 3$.
    This bound is the best possible. 
\end{lemma}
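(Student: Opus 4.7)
My plan is to partition $V$ into three congruent right triangles meeting at $o$, apply Lemma~\ref{lem:newman} to each piece, and splice the three resulting extended paths together. Place coordinates so that $U=[-1/2,1/2]^2$, $a=(-1/2,-1/2)$, $b=(1/2,-1/2)$, and let $c=(-1/2,1/2)$, $d=(1/2,1/2)$ denote the other two corners of $U$. Then $V$ is the pentagon with vertices $a,c,d,b,o$, which I partition into the three triangles $T_1=\Delta oac$, $T_2=\Delta ocd$, and $T_3=\Delta odb$, assigning each point of $X$ to a triangle containing it (with ties on the shared edges $oc$ or $od$ broken arbitrarily). Each $T_i$ is a right triangle with its right angle at $o$, legs of length $\sqrt{1/2}$, and hypotenuse of length $1$, and the three hypotenuses are the sides $ac$, $cd$, $db$ of $U$.

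Applying Lemma~\ref{lem:newman} inside each $T_i$ yields an extended path $Q_i$ connecting the two endpoints of the hypotenuse and visiting every point of $X\cap T_i$, with $\sum_{e\in Q_i}|e|^2\leq 1$. Concatenating $Q_1,Q_2,Q_3$ at the shared corners $c$ and $d$ produces a single extended path from $a$ to $b$ that visits all of $X$ and has total squared-edge sum at most $3$.

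To produce an honest Hamiltonian path on $X$, I would remove those of the four auxiliary vertices $a,b,c,d$ that do not belong to $X$ by successively applying Lemma~\ref{lem:shortcut}. Dropping the endpoints $a$ and $b$ merely deletes edges. At the splice point $c$, the incoming edge ends at some $p\in T_1$ and the outgoing edge starts at some $q\in T_2$; viewed from $c$, the ray $cp$ lies in the angular sector bounded by $c\to a$ and $c\to o$, while $cq$ lies in the sector bounded by $c\to o$ and $c\to d$, so $\angle pcq\leq\angle acd=90^\circ$, and Lemma~\ref{lem:shortcut} gives $|pq|^2\leq|pc|^2+|cq|^2$. The analogous argument works at $d$ (bounded by $\angle cdb=90^\circ$); the degenerate case in which some $T_i\cap X=\emptyset$ is handled by iterating the shortcut across two consecutive auxiliary vertices, whose intermediate angles are checked in the same way. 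After all shortcuts, the resulting Hamiltonian path on $X$ has squared-edge sum at most $3$.

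For tightness I would exhibit $X=\{a,c,d,b\}$: every Hamiltonian path on these four corners either uses three sides of $U$ (squared sum equal to $3$) or at least one diagonal of length $\sqrt2$ (squared sum at least $4$), so the minimum is exactly $3$. The main technical point is the angle verification at the splice vertices $c$ and $d$; once the decomposition of $V$ into three right triangles meeting at the center is in place, this non-obtuseness follows directly from the geometry of $V$.
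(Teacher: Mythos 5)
Your proposal is correct, but it takes a slightly different route than the paper. The paper subdivides $V$ into \emph{two} right triangles: since $a$, $o$, $d$ are collinear, the cut along $od$ produces the small isoceles right triangle $\Delta obd$ (hypotenuse $bd$ of length $1$) together with $\Delta acd$ (hypotenuse $ad$ of length $\sqrt 2$), and Lemma~\ref{lem:newman} plus a single shortcut at the one splice vertex $d$ gives $\sum|e|^2 \leq 1^2 + (\sqrt2)^2 = 3$. You instead use \emph{three} congruent right triangles $\Delta oac$, $\Delta ocd$, $\Delta odb$, each with hypotenuse a full side of $U$, giving $1+1+1 = 3$ after two splices (at $c$ and at $d$). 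Both decompositions are valid and yield the same bound; your version is more symmetric but requires two angle checks rather than one, while the paper's exploits the collinearity of $a,o,d$ to reduce to only two pieces and one splice. Your angle verification at $c$ and $d$ is sound (each $T_i$, viewed from a shared corner, sits in a cone that is a subcone of a right angle of the square), and your tight example (the four corners $a,c,d,b$, where the only order avoiding a diagonal costs exactly $3$) matches the $4$-point example the paper cites.
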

\begin{proof}
  Subdivide $V$ into two right triangles as shown in Figure~\ref{fig:shapes}\,(right).
    Consider the path obtained by concatenating the extended paths for the two right triangles.
    Further shortcut the path by using Lemma~\ref{lem:shortcut} to obtain a Hamiltonian path $P$ for which
    $\sum_{e \in P} |e|^2 \leq 1^2 + (\sqrt2)^2 =3$. 
  The $4$- and $5$-point examples in Figure~\ref{fig:square} show that this bound is tight.
\end{proof}

\subsection*{A different version of Theorem~\ref{thm:fixedn}}

We remark that the proof of Theorem~\ref{thm:fixedn} can be extended for point sets of size $n$,
when $n$ is slowly growing in $k$, to obtain an upper bound sharper than that in Theorem~\ref{thm:asymp-hc+}.

\begin{theorem} \label{thm:smalln}
  The following bounds are in effect:
  \begin{itemize} \itemsep 1pt
  \item [{\rm (i)}] If $n \leq 2^k +2$, then there exists a Hamiltonian cycle $H$ such that
   \[  S_k(H) \leq \left(2 + \left(\frac83\right)^{k/2} \right) k^{k/2}. \]  
Consequently, $s_k^{\texttt{HC}}(n)  \leq 1.64 \sqrt{k}$, for $k$ sufficiently large. 
\item [{\rm (ii)}]
 If $n \leq 2^k$, then there exists a Hamiltonian cycle $H$ such that
  \[  S_k(H) \leq \left(200 + 2.01^{k/2} \right) k^{k/2}. \]  
Consequently, $s_k^{\texttt{HC}}(n)  \leq 1.42 \sqrt{k}$, for $k$ sufficiently large. 
\end{itemize}
\end{theorem}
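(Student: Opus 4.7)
The plan is to extend the proof of Theorem~\ref{thm:fixedn} to the regime where $n$ can grow exponentially in $k$. I would run Algorithm~1 from Section~\ref{pathgreedy} on $X$ to produce a Hamiltonian path $F=e_1 e_2\cdots e_{n-1}$, then close it into a Hamiltonian cycle $H$ by appending an edge $f$ with $|f|\leq \sqrt{k}$. The edge weights of $F$ are non-decreasing along the order of insertion, because between consecutive steps the set of admissible edges (those joining endpoints of distinct paths) only shrinks, so the minimum admissible weight cannot decrease. The crucial geometric observation, already present in the proof of Theorem~\ref{thm:fixedn}, is that if the trailing $M$ edges $e_{n-M},\ldots,e_{n-1}$ all satisfy $|e_i|^2 > \alpha^2 k$, then at the beginning of step $n-M$ the $M+1$ path endpoints (one per component) form a set $Y\subseteq[0,1]^k$ with pairwise distances exceeding $\alpha\sqrt{k}$; Lemma~\ref{lem:sym} or Lemma~\ref{lem:hypercube3} then bounds $|Y|=M+1$.

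For part (i), I would apply Lemma~\ref{lem:sym} with $m=3$ at threshold $\alpha^2=2/3$, forcing $|Y|\leq 2$ and hence $M\leq 1$. This yields
\[ S_k(H)\leq |f|^k+|e_{n-1}|^k+\sum_{i=1}^{n-2}|e_i|^k\leq 2k^{k/2}+(n-2)\bigl(\tfrac{2k}{3}\bigr)^{k/2}, \]
which for $n\leq 2^k+2$ is at most $\bigl(2+(8/3)^{k/2}\bigr)k^{k/2}$. Taking $k$-th roots and using $(8/3)^{1/2}<1.64$ gives the asymptotic consequence.

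For part (ii), I would partition the edges of $F$ using the thresholds $\gamma=2.01/4$ and $100/199$, calibrated so that $4\gamma=2.01$ and so that Lemma~\ref{lem:sym} with $m=200$ has threshold exactly $100/199$ (since $\lfloor 200/2\rfloor\lceil 200/2\rceil/\binom{200}{2}=100/199$). The \emph{short} edges ($|e|^2\leq\gamma k$) number at most $n\leq 2^k$ and contribute at most $2^k(\gamma k)^{k/2}=2.01^{k/2}k^{k/2}$. The \emph{very long} edges ($|e|^2>100k/199$) are controlled by Lemma~\ref{lem:sym} with $m=200$, yielding $|Y|\leq 199$ and hence at most $198$ such edges, for a total contribution of at most $198\,k^{k/2}$. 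The \emph{medium} edges lie in the narrow band $\gamma k<|e|^2\leq 100k/199$; Lemma~\ref{lem:hypercube3} (case~2) applied at $\alpha=\sqrt{\gamma}$ bounds their count by $(0.999/\sqrt{\gamma})^k$, and combining with the per-edge bound $(100k/199)^{k/2}$ gives a total of $\bigl(0.999^2\cdot(100/199)/\gamma\bigr)^{k/2}k^{k/2}$. Since $(100/199)/\gamma=400/(199\cdot 2.01)=1.000025\ldots$, the base $0.999^2\cdot(100/199)/\gamma<1$ and the medium contribution is at most $k^{k/2}$. Adding the closing edge ($\leq k^{k/2}$), the total is
\[ S_k(H)\leq (2.01^{k/2}+1+198+1)k^{k/2}=(200+2.01^{k/2})k^{k/2}, \]
and taking $k$-th roots gives $s_k^{\texttt{HC}}(n)\leq 1.42\sqrt{k}$ for $k$ large (using $\sqrt{2.01}<1.42$).

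The main obstacle is the delicate calibration at the short/medium boundary: the inequality $0.999^2\cdot(100/199)/(2.01/4)<1$ holds only because $2.01/4$ just barely undershoots $100/199$, so there is very little margin for absorbing the medium edges into the claimed constant. A secondary issue is the ``$k$ sufficiently large'' hypothesis of Lemma~\ref{lem:hypercube3}; for small $k$ the stated bound is very loose (for $k=2$ it already dwarfs Newman's bound of $4$), and such cases can be covered either by direct applications of Lemma~\ref{lem:sym} at intermediate thresholds or by the trivial estimate $2^k k^{k/2}\leq (200+2.01^{k/2})k^{k/2}$ whenever $2^k\leq 200$.
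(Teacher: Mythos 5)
Part (i) of your proof is essentially the paper's: rerun the proof of Theorem~\ref{thm:fixedn} and note that when $n-2 \leq 2^k$, the contribution $(n-2)(\tfrac{2}{3}k)^{k/2} \leq 2^k(\tfrac{2}{3})^{k/2}k^{k/2} = (\tfrac{8}{3})^{k/2}k^{k/2}$.

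Part (ii), however, takes a genuinely different and in fact more careful route. The paper uses only two classes of edges: \emph{short} ($|e|^2 \leq \tfrac{100}{199}k$) and \emph{long}. It bounds the short contribution by $n \cdot (\tfrac{100}{199}k)^{k/2} \leq 2^k(\tfrac{100}{199})^{k/2}k^{k/2} = (\tfrac{400}{199})^{k/2}k^{k/2}$ and then asserts this is at most $2.01^{k/2}k^{k/2}$. But $\tfrac{400}{199} = 2.01005\ldots > 2.01$, so the paper's two-class argument does not quite yield the displayed constant (a small numerical slip; it is harmless for the asymptotic conclusion since $\sqrt{400/199} \approx 1.4178 < 1.42$). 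Your three-class split is precisely calibrated to repair this: short edges at threshold $\gamma k$ with $4\gamma = 2.01$ give the short contribution exactly $2.01^{k/2}k^{k/2}$; very long edges ($|e|^2 > \tfrac{100}{199}k$) number at most $198$ by Lemma~\ref{lem:sym} with $m=200$; and the razor-thin medium band $\gamma k < |e|^2 \leq \tfrac{100}{199}k$ is controlled by Lemma~\ref{lem:hypercube3} because $0.999^2\cdot\tfrac{100/199}{\gamma} < 1$, so its contribution is at most $k^{k/2}$. The cost of the extra precision is the dependence on Lemma~\ref{lem:hypercube3}, whose hypothesis ``$k \geq k_0$'' restricts your version of the displayed inequality to large $k$; the paper's simpler argument (with a corrected constant, say $2.011^{k/2}$) is valid for all $k$. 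Given that the paper's constant as written is slightly too optimistic, your argument is a legitimate repair, and the large-$k$ restriction you acknowledge is consistent with the ``$k$ sufficiently large'' qualifier on the stated consequence.

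Two micro-nitpicks: your $\alpha=\sqrt{\gamma}$ sits essentially on the boundary between cases 1 and 2 of Lemma~\ref{lem:hypercube3} ($\sqrt{\gamma} = 0.70887\ldots$ versus $\sqrt{100/199}=0.70888\ldots$), so you should double-check which case fires, though both give $|Y|\alpha^k \leq 0.999^k$ so the conclusion is unaffected; and you should state explicitly that the greedy edge weights are non-decreasing, which is what lets you apply the endpoint-code argument at the step where the first edge of a given class appears.
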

\begin{proof}
  % The proof is similar to that of Theorem~\ref{thm:fixedn}.
  Let $X\subseteq [0,1]^k$ be a point set of size $n$.
  We run Algorithm 1 from Section~\ref{pathgreedy}. Let $F_i$ be the collection of paths at the $i$-th step,
  let $e_i$ be the edge added in the $i$-th step, and let $F=F_{n-1}$ be the final Hamiltonian path.

(i) We know that $|e_i| \leq \sqrt{\frac{2}{3}k}$ for $i\leq n-2$. Note that $|e_{n-1}| \leq \sqrt{k}$  trivially.
Now, let $f=ab$ be the edge where $a$ and $b$ are the  two endpoints of the final path $F$. 
Set $H=F+f$ to be the Hamiltonian cycle when $f$ is added to $F$. Since $|f| \leq \sqrt{k}$  trivially,
we get 
\begin{align*}
  S_k(H)&=\sum_{e\in H}|e|^k= |f|^k+|e_{n-1}|^k+ \sum_{i=1}^{n-2} |e_i|^k \leq
  2\left(\sqrt{k}\right)^k+ (n-2)\left(\sqrt{\frac{2}{3}k}\right)^k \\
  &\leq \left( 2 + 2^k \cdot \left(\frac{2}{3}\right)^{k/2} \right) \cdot k^{k/2}
    = \left(2 + \left(\frac83\right)^{k/2} \right) k^{k/2}. 
\end{align*}

Consequently, $s_k^{\texttt{HC}}(n) \leq s_k(H) \leq 1.64 \sqrt{k}$, for $k$ sufficiently large. 

\medskip
(ii) We classify the edges $e\in F$ into two types.
  \begin{enumerate} \itemsep 1pt
    \item \emph{short} edges: $|e|^2 \leq \frac{100k}{199}$.
  \item \emph{long} edges: $\frac{100k}{199} < |e|^2 $.
  \end{enumerate}
 The number of short edges $e\in F$ is at most $n \leq 2^k$, trivially.
 The number of long edges $e\in F$ is at most $199$ by Lemma~\ref{lem:sym} applied with $m=200$.
Now, let $f=ab$ be the edge where $a$ and $b$ are the  two endpoints of the final path $F$. 
Set $H=F+f$ to be the Hamiltonian cycle when $f$ is added to $F$. Since $|f| \leq \sqrt{k}$  trivially,
we get 
\begin{align*}
  S_k(H) &\leq \left( 200 + 2^k \cdot \left(\frac{100}{199}\right)^{k/2} \right) \cdot k^{k/2}
    \leq \left(200 + 2.01^{k/2} \right) k^{k/2}. 
\end{align*}
Consequently, $s_k^{\texttt{HC}}(n) \leq s_k(H) \leq 1.42 \sqrt{k}$, for $k$ sufficiently large. 
\end{proof}

\end{document}